\newcommand{\blackrook}{ \raisebox{-0.4em}{\BlackRookOnWhite} } 
\newtheorem{theorem}{Theorem}[section]
\newtheorem{proposition}[theorem]{Proposition}
\newtheorem{lemma}[theorem]{Lemma}
\newtheorem{corollary}[theorem]{Corollary}
\theoremstyle{definition}
\newtheorem{definition}[theorem]{Definition}
\newtheorem{example}[theorem]{Example}
\newtheorem{remark}[theorem]{Remark}
\newtheorem{fact}[theorem]{Fact}
\definecolor{lightblue}{rgb}{0.8,0.8,1.0}
\definecolor{lightgreen}{rgb}{0.8,1.0,0.8}
\definecolor{pBlue}{RGB}{86,139,190}
\definecolor{pCyan}{RGB}{149,186,201}
\definecolor{pSand}{RGB}{184,166,121}
\definecolor{pAlgae}{RGB}{87,115,135}
\definecolor{pSkin}{RGB}{236,216,167}
\definecolor{pGray}{RGB}{156,175,156}
\definecolor{pPink}{RGB}{215,114,127}
\definecolor{pOrange}{RGB}{211,153,80}
  \tikzset{
dot/.style = {circle, fill, minimum size=#1,
              inner sep=0pt, outer sep=0pt},
dot/.default = 6pt 
}
\newcommand{\defin}[1]{%
\relax\ifmmode%
\textcolor{blue}{#1}%
\else\textcolor{blue}{\emph{#1}}%
\fi%
}
\newcommand{\thsup}{\textnormal{th}}
\renewcommand{\rook}{\raisebox{-0.1em}{\symrook}}
\tikzset{every picture/.append
	style={
		scale=1,
		x=1em,
		y=1em,
		entries/.style={xshift=-0.5em,yshift=-0.5em,font=\small},
		thickLine/.style={line width=1.4pt,line join=round},
		bgEntry/.style={xshift=-0.5em,yshift=-0.5em,
			regular polygon,regular polygon sides=4,fill,inner sep=0pt,minimum size=1.35em
		}
	}
}
\DeclareMathOperator{\rk}{rk}
\newcommand{\NN}{\text{NN}}
\newcommand{\rookMat}{\mathcal{R}}
\newcommand{\preceqdot}{\mathrel{\mathpalette\pr@ceqd@t\relax}}
\newcommand{\pr@ceqd@t}[2]{%
  \begingroup
  \sbox\z@{$#1\prec$}\sbox\tw@{$#1\preceq$}%
  \dimen@=\dimexpr\ht\tw@-\ht\z@\relax
  {\preceq}%
  \mkern-5mu
  \raisebox{\dimen@}{$\m@th#1\cdot$}%
  \endgroup
}
\title{Positroidal aspects of non-nesting rook placements}
\author[Aryaman Jal]{Aryaman Jal}
\address{Department of Mathematics, KTH Royal Institute of Technology
SE-100 44 Stockholm, Sweden}
\email{aryaman@kth.se}
\keywords{Rook matroid, positroid, Grassmann necklace, totally non-negative Grassmannian}
\subjclass[2020]{05B35, 05B40}
\begin{document}

\begin{abstract}
    Rook matroids were recently introduced by the author and Alexandersson as matroids whose bases arise from certain restricted rook placements on a skew-shaped board. They were shown to be a subclass of transversal matroids and positroids. We further investigate the structural properties of rook matroids with an emphasis on the positroidal point of view. In particular, we  characterize rook matroids in terms of Grassmann necklaces of positroids, answering a question of Lam (2024). Along the way, we give a new proof of the positroidal structure of rook matroids and determine an important subclass of their cyclic flats.   
\end{abstract}

\maketitle 

\section{Introduction}

\noindent Matroids represent abstractions of the phenomena of linear independence and acyclicity. They can be constructed from a wide range of mathematical objects; we will be interested in ones arising in a combinatorial setting. For example, each of graphs, set systems, and lattice paths respectively give rise to the classes of graphic matroids, transversal matroids~\cite{EdmondsFulkerson1965TransversalMatroids}, and lattice path matroids~\cite{Bonin2003lattice, Bonin2006lattice}. A new entrant on this list is the family of rook matroids --- defined in terms of certain restricted rook placements on skew-shaped boards --- that were introduced by the author and Alexandersson in~\cite{alexandersson2024rooks}. In \cite{alexandersson2024rooks}, the structural properties of the rook matroid were studied and in particular, rook matroids were shown to be transversal matroids and positroids. In this paper, we delve deeper into the positroidal structure of rook matroids. 

Positroids were introduced by Blum in~\cite{Blum01BaseSortableMatroids} and rediscovered by Postnikov in~\cite{Postnikov2006TotalPositivity} who uncovered their connection to the theory of total positivity. This connection has since been mined in myriad directions, including algebraic geometry~\cite{KnutsonLamSpeyer2013PositroidJuggling}, statistical mechanics~\cite{GalashinPylyavskyy2020Ising}, and knot theory~\cite{Galashin2020PositroidsKnotsCatalan, GalashinLam2024PlabicLinks}. A  burgeoning area at the interface of geometry and theoretical physics is  that of scattering amplitudes; combinatorial objects related to positroids play a key role in this theory~\cite{ArkaniHamedLamBai2017PositiveGeometries, Arkani2016GrassmannianGeometryScatteringAmplitude, KarpWilliamsZhangDecompositionsAmplituhedra}. 

Across these areas, the usefulness of positroids arises from their multi-faceted combinatorial structure. Well-known classes of combinatorial objects parametrize positroids; these include Grassmann necklaces, bounded affine permutations, Le-diagrams, and more~\cite{Postnikov2006TotalPositivity, KnutsonLamSpeyer2013PositroidJuggling}. Recently, a new axiomatization of positroids called essential sets was introduced by Mohammadi and Zaffalon~\cite{Mohammadi2024EssentialSets}. We treat these in the context of rook matroids.  

Lattice path matroids are also known to be positroids~\cite{Oh2011Positroids}. While the approach in the aforementioned paper was direct, this fact can be deduced from older results (see \cite{Blum01BaseSortableMatroids, LamPostnikov2024Polypositroids} for example) due to the interval presentation (as a transversal matroid) of lattice path matroids. In contrast, such an approach is not possible for rook matroids and instead requires us to explicitly determine the Grassmann necklaces in a rook-theoretic way. In~\cite{alexandersson2024rooks}, we completely characterized when a rook matroid and lattice path matroid on the same skew shape are isomorphic. We also showed that despite being non-isomorphic in general, a lattice path matroid and rook matroid on the same skew shape nevertheless have the same Tutte polynomial. It is therefore of interest to further disambiguate these classes of matroids; our approach is from the positroidal perspective. In this paper, we study rook matroids from a positroidal point of view and in particular establish the following. \begin{enumerate}
    \item If the set of non-nesting rook placements on a board $B$ forms a matroid, then $B$ must be a skew shape, thereby proving a converse of \cite[Theorem 3.3]{alexandersson2024rooks}.
    \item Rook matroids are sort-closed matroids. This provides a new proof of the positroidal structure of rook matroids and gives a rook-theoretic interpretation of the two sorting operators that arise in the theory of alcoved polytopes.
    \item The essential sets of a rook matroid $\rookMat_{\lambda/\mu}$ are entirely determined by the corners of $\lambda/\mu$. In particular, the inequalities of the matroid polytope of $\rookMat_{\lambda/\mu}$ can be read off from the corners of the skew shape alone, a result that can be seen in parallel to the lattice path matroid case~\cite{KnauerMartinezSandovalRamirez}. 
    \item We characterize rook matroids in terms of Grassmann necklaces, thereby answering a question of Lam~\cite{Lam2024PrivateCommunication}. 
\end{enumerate}

\section{Preliminaries}

\noindent In this section, we collect the main definitions of the objects we study in the rest of this paper. 

A \defin{board} $B$ with $r$ rows and $c$ columns is a subset of the rectangular grid $[r] \times [c]$. 
We label the rows with elements from $[r]$ and the columns with elements from $[r+1, r+c]$ respectively, as in Figure~\ref{fig:rho_row_col}. Elements of $B$ are called \defin{cells}.
The row labeling is done from the top to bottom and the column labeling is done from left to right. We refer to cells $(i, j)$ of $B$ with respect to this labeling until otherwise specified. Given a board $B$ with $r$ rows and $c$ columns, the bipartite graph corresponding to $B$ is the graph $\Gamma_{B} = ([1, r]\sqcup [r+1,r+c]), E$ where $i \in [1,r]$ is connected to $j \in [r+1, r+c]$ if $(i, j) \in B$. A board $B$ is \defin{connected} if the bipartite graph $\Gamma_{B}$ is connected. Throughout, we will assume that all boards have the property that each row and each column has at least one cell.  

We will mainly be interested in boards arising from Ferrers and skew Ferrers shapes. An \defin{integer partition} $\lambda = (\lambda_{1}, \ldots , \lambda_{n})$ is any non-increasing tuple of positive integers. Following the English convention, the \defin{Ferrers shape} of $\lambda$ is a left-justified, weakly decreasing array of boxes, with $\lambda_{i}$ cells in the $i^\thsup$ row. For the purpose of brevity, we will frequently omit the commas when writing partitions. 

Given two integer partitions $\lambda \supseteq \mu$ of length at most $r$,
we let the \defin{skew Ferrers board} associated with $\lambda/\mu$ be the 
board with cells
\[
  \left\{ (i, r+j) : 1 \leq i \leq r \text{ and } \mu_i < r + j \leq \lambda_i \right\}.
\]
When $\mu$ is the empty partition, we treat $\mu$ as the all-zeroes vector. In what follows, we also refer to a skew Ferrers board as a \defin{skew shape}. An \defin{outer corner} of $\lambda/\mu$ is a cell $(i, j) \notin \lambda/\mu$ such that $(i, j-1), (i-1, j) \in \lambda/\mu$. Similarly, an \defin{inner corner} of $\lambda/\mu$ is a cell $(i, j) \notin \lambda/\mu$ such that $(i+1, j), (i, j+1) \in \lambda/\mu$. The \defin{size} of a skew shape $\lambda/\mu$ is the number of boxes in its diagram and is denoted $|\lambda/\mu|$. 

A \defin{non-attacking} placement of rooks on a board is a configuration of rooks where no two rooks
can take each other in the sense of chess. More formally, a non-attacking rook 
placement $\rho$ on a board $B$ is a subset of $B$ no two cells of which share a row or column index. Two rooks in a non-attacking rook placement form a \defin{nesting} if 
one rook lies South-East of another. Hereafter, a \defin{non-nesting rook placement} 
(or just rook placement, when it is clear that the setting is a non-nesting one) on a 
board $B$ is a non-attacking rook placement such that no pair of rooks forms a nesting. We use $\NN_{B}$ to mean the collection of non-nesting rook placements on $B$.

\section{Positroidal aspects of rook matroids}
\noindent Matroids can be axiomatized in various ways. We will identify a matroid by its collection of bases. For undefined terminology we refer the reader to~\cite{Oxley2006MatroidTheory}. 
\begin{definition}\label{defn:matroid_defn}
    A matroid $M$ is given by a  pair $(E, \mathcal{B})$ where $E$ is a finite set and $\mathcal{B}$ is a collection of subsets of $E$ satisfying the following two properties: \begin{enumerate}
        \item $\mathcal{B} \neq \emptyset$.
        \item For each distinct pair $B_{1}, B_{2}$ in $\mathcal{B}$ and $a \in B_{1} \setminus B_{2}$, there exists some $b \in B_{2} \setminus B_{1}$ such that~$(B\setminus \{a\}) \cup \{b\} \in \mathcal{B}$.
    \end{enumerate}
\end{definition}

In \cite{alexandersson2024rooks}, it was shown that a matroid structure can be placed on the set of non-nesting rook placements on a skew Ferrers diagram in the following way. Hereafter, any usage of the word non-nesting for a rook placement implicitly assumes that the rooks are in a non-attacking configuration. Given a non-nesting rook placement $\rho$ on a board $B$ we identify $\rho$ with the set $R(\rho) \cup C(\rho)$, where $R(\rho)$ is the set of row indices occupied by $\rho$ and $C(\rho)$ is the set of column indices that are not occupied by $\rho$. See Figure~\ref{fig:rho_row_col} for an example. Apart from being a more economical way of describing non-nesting rook placements, this description also lays bare the following matroid structure.

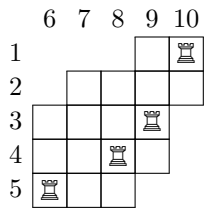
\begin{figure}[h]
    \centering
    \begin{tikzpicture}[inner sep=0in,outer sep=0in]
\node (n) {\begin{varwidth}{6cm}{
\ytableausetup{boxsize=1.25em}
\begin{ytableau} \none & \none[6] & \none[7] & \none[8] & \none[9] & \none[10]  \\ \none[1] & \none & \none & \none &  & \rook \\ \none[2] & \none &  & & & \\ \none[3] &  &  &  & \rook  \\ \none[4] &  & & \rook & \\ \none[5] &  \rook & & \\ \end{ytableau}}\end{varwidth}};
\end{tikzpicture}

    \caption{The rook placement $\rho$ on $55443/31$ is identified with set representation $R(\rho) \cup C(\rho) = \{1,3,4,5,7\}$.}
    \label{fig:rho_row_col}
\end{figure}

\begin{theorem}\cite[Theorem 3.3]{alexandersson2024rooks}
    Let $\lambda / \mu$ be a skew Ferrers shape with $r$ rows and $c$ columns. The \defin{rook matroid} on $\lambda / \mu$ is the matroid with groundset $[r+c]$ and collection of bases of the form: \[
    \{R(\rho) \cup C(\rho): \rho \in \mathrm{NN}_{\lambda / \mu}\}
    \]
    where $\mathrm{NN}_{\lambda / \mu}$ denotes the set of non-nesting rook placements on $\lambda / \mu$.
\end{theorem}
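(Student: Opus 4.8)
The plan is to identify $\mathcal{B} := \{R(\rho)\cup C(\rho) : \rho \in \NN_{\lambda/\mu}\}$ with the set of bases of an explicit transversal matroid, so that the two axioms of \Cref{defn:matroid_defn} follow from the classical fact that transversal matroids are matroids~\cite{EdmondsFulkerson1965TransversalMatroids}. I would first record a bookkeeping observation: a non-nesting placement $\rho$ is completely determined by the pair consisting of its occupied rows $R(\rho)$ and its set of occupied column labels $T(\rho)$. Indeed, if $\rho$ occupies rows $i_1 < \cdots < i_k$ and column labels $t_1 < \cdots < t_k$, the non-nesting condition forces the rook in row $i_p$ to lie in column $t_{k+1-p}$ (columns strictly decrease as rows increase), and $\rho$ is a valid board placement precisely when each forced cell $(i_p, t_{k+1-p})$ belongs to $\lambda/\mu$. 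Since $C(\rho)$ is the complement of $T(\rho)$ inside $[r+1,r+c]$, the set $R(\rho)\cup C(\rho)$ recovers both $R(\rho)$ and $T(\rho)$; hence $\rho \mapsto R(\rho)\cup C(\rho)$ is injective, and a placement with $k$ rooks contributes $k + (c-k) = c$ elements, so every member of $\mathcal{B}$ has cardinality $c$. Non-emptiness is immediate, as the empty placement yields the full column set $[r+1,r+c]\in\mathcal{B}$.

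Next I would construct the transversal matroid $\mathcal{T}$ on ground set $[r+c]$ using $c$ column slots indexed by the internal columns $1,\dots,c$ (the label $r+j$ corresponding to slot $j$): a column element $r+j$ is adjacent only to slot $j$, while a row element $i\in[r]$ is adjacent to slot $j$ exactly when $(i,r+j)$ is a cell of $\lambda/\mu$, i.e.\ $\mu_i < j \le \lambda_i$. A subset $S$ is then a basis of $\mathcal{T}$ iff $|S|=c$ and $S$ admits a perfect matching onto all $c$ slots. Because a column element can only fill its own slot, in any such matching the column part $S_C = S\cap[r+1,r+c]$ occupies exactly the slots it names, and the complementary (occupied) slots $T_C$ must be filled by the row part $S_R = S\cap[1,r]$ through a board-valid bijection. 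The inclusion $\mathcal{B}\subseteq\{\text{bases of }\mathcal{T}\}$ is then transparent: a non-nesting $\rho$ itself provides a board-valid bijection between $R(\rho)$ and its occupied columns, which together with the elements of $C(\rho)$ filling their own slots saturates all $c$ slots; the same observation applied to the empty placement shows $\mathcal{T}$ has rank exactly $c$, so bases indeed have size $c$.

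The reverse inclusion is the crux of the argument and rests on a sorting lemma, which I expect to be the main obstacle (though the proof is short): if rows $i_1 < \cdots < i_k$ admit any board-valid bijection onto a column set, then the unique non-nesting bijection onto that column set is also board-valid. Here the key structural input is that, since $\lambda$ and $\mu$ are weakly decreasing, the admissible columns of row $i$ form an interval $(\mu_i,\lambda_i]$ whose endpoints both decrease as $i$ grows. Given a board-valid bijection that is not non-nesting, the assigned columns read along increasing rows fail to be decreasing, so two consecutive rows $i_p,i_{p+1}$ receive columns $x<y$ respectively; swapping these assignments keeps both cells on the board, as the upper bounds $y\le\lambda_{i_{p+1}}\le\lambda_{i_p}$ and $x<y\le\lambda_{i_{p+1}}$ and the lower bounds $y>x>\mu_{i_p}\ge\mu_{i_{p+1}}$ all hold, while the swap strictly reduces the number of inversions. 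Iterating reaches the non-nesting bijection. Granting the lemma, any basis $S$ of $\mathcal{T}$ yields, via its board-valid bijection between $S_R$ and $T_C$, a non-nesting placement $\rho$ with $R(\rho)=S_R$ and $C(\rho)=S_C$, so $S=R(\rho)\cup C(\rho)\in\mathcal{B}$. Hence $\mathcal{B}$ is precisely the set of bases of the transversal matroid $\mathcal{T}$, completing the proof.
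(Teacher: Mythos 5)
Your argument is correct and complete. The only point at which it could have failed is the sorting lemma, and your verification there is exactly right: because each row of a skew shape meets a contiguous interval of columns $(\mu_i,\lambda_i]$ whose two endpoints weakly decrease down the rows, an adjacent transposition of an out-of-order pair in a board-valid system of distinct representatives stays on the board, so bubble-sorting reaches the unique non-nesting bijection. Together with the observation that a column element can only be matched to its own slot, this correctly identifies $\{R(\rho)\cup C(\rho):\rho\in\NN_{\lambda/\mu}\}$ with the full transversals of the presentation $A_j=\{r+j\}\cup\{i\in[r]:(i,r+j)\in\lambda/\mu\}$, and the matroid axioms then come for free from Edmonds--Fulkerson.

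Note that the present paper does not reprove this statement; it is quoted from \cite{alexandersson2024rooks}, where the basis-exchange axiom is verified directly by an exchange argument on non-nesting rook placements, with the transversal presentation established as a separate, subsequent result. Your route inverts that logic: you get the matroid property and the transversal presentation simultaneously, at the cost of the sorting lemma. That lemma is not a detour, though --- it is the same uncrossing principle (via Fact~\ref{fact:skew_shape_characterization}) that this paper exploits in Definition~\ref{definiton:uncrossed_double_rook} and the proof of Theorem~\ref{prop:rooks_sort_closed}, so your proof arguably makes the mechanism behind the matroid structure more transparent. What the direct exchange proof buys instead is independence from the theory of transversal matroids and an explicit description of which element can be swapped in, which is the kind of local information the original paper goes on to use.
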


Well-known classes of matroids can be realized as rook matroids. When the skew shape is a straight shape $\lambda$, the rook matroid is a Schubert matroid (also known as generalized Catalan matroid). 

\begin{example}\label{example1:rectangles}
Let $n \geq 2$ and $1 \leq k \leq n-1$. If $\lambda/\mu = (n-k)^{k}$, an $(n-k) \times k$ rectangular board, then the set of non-nesting rook placements  $\NN_{\lambda/\mu}$ is of size $\binom{n}{k}$~\cite[Example 2.3]{alexandersson2024rooks} and the resulting matroid is the uniform matroid $U_{k, n}$ of rank $k$ on $n$ elements. 
\end{example}

In \cite{alexandersson2024rooks}, the connection between rook matroids and another well-studied class of matroids defined on a skew shape --- namely lattice path matroids \cite{Bonin2003lattice, Bonin2006lattice} --- was studied from the structural point of view. More specifically, it was shown  that if $\lambda/\mu$ is a $332/1$-avoiding skew shape, then the rook matroid $\mathcal{R}_{\lambda/\mu}$ is isomorphic to the lattice path matroid $\mathcal{P}_{\lambda/\mu}$; see \cite[Section 3.2]{alexandersson2024rooks} for more details. The tight correspondence between these matroids is one explanation for why there exists boards that are not skew shapes that do not support the structure of a rook matroid \cite[Remark 3.4]{alexandersson2024rooks}. 

The next proposition gives a self-contained proof of a stronger fact, characterizing skew shapes as the \emph{only} boards on which the set of non-nesting rook placements forms a matroid. In the following we assume that the rows of a board $B$ are labeled with elements from $[r]$ and the columns of $B$ are labeled with elements from $[r+1, r+c]$. We also use the following fact, the proof of which is straightforward and hence omitted. 

\begin{fact}\label{fact:skew_shape_characterization}
    A board $B$ is a skew shape if and only if for every pair of cells $(i, j), (k, \ell) \in B$ with $i<k$ and $j< \ell$, we also have $(k, j), (i, \ell) \in B$.
\end{fact}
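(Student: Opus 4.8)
The plan is to prove the two implications separately, working with column positions in place of the column labels $r+1,\dots,r+c$ (the two orders coincide, so every inequality below is unaffected) and using throughout the standing assumption that each row and each column of $B$ is nonempty. Once a row is known to be a contiguous block of columns I write $[a_i,b_i]$ for the positions it occupies.

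For the forward direction I assume $B$ is the skew shape $\lambda/\mu$, so that row $i$ occupies exactly the positions in $[\mu_i+1,\lambda_i]$ and both sequences $\mu$ and $\lambda$ are weakly decreasing. Given cells $(i,j),(k,\ell)\in B$ with $i<k$ and $j<\ell$, membership gives $\mu_i+1\le j$ and $\ell\le\lambda_k$, while monotonicity gives $\lambda_k\le\lambda_i$ and $\mu_k\le\mu_i$. Hence $\mu_i+1\le j<\ell\le\lambda_k\le\lambda_i$ shows $(i,\ell)\in B$, and $\mu_k+1\le\mu_i+1\le j<\ell\le\lambda_k$ shows $(k,j)\in B$; this direction is routine bookkeeping.

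For the reverse direction I assume the stated closure condition and recover the three defining features of a skew shape. The first and only delicate point is that every row is contiguous: given $(i,p),(i,q)\in B$ and a position $s$ with $p<s<q$, I use nonemptiness of column $s$ to choose some cell $(k,s)\in B$, and then apply the hypothesis either to the pair $(i,p),(k,s)$ (when $k>i$) or to the pair $(k,s),(i,q)$ (when $k<i$), the case $k=i$ being immediate; in each case the conclusion includes $(i,s)$, so row $i$ has no gaps. Writing each nonempty row as $[a_i,b_i]$, I then check that both endpoint sequences are weakly decreasing: if $b_i<b_k$ for some $i<k$, applying the hypothesis to $(i,b_i),(k,b_k)$ forces $(i,b_k)\in B$, impossible as $b_k$ lies to the right of row $i$; and if $a_i<a_k$ for some $i<k$, applying it to $(i,a_i),(k,a_k)$ forces $(k,a_i)\in B$, impossible as $a_i$ lies to the left of row $k$. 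Setting $\lambda_i=b_i$ and $\mu_i=a_i-1$ then presents $B$ as the skew shape $\lambda/\mu$.

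The step I expect to require the most care is row contiguity. The hypothesis is intrinsically a two-row statement, so on its own it says nothing about the internal structure of a single row; the device that unlocks it is the standing nonemptiness assumption, which furnishes a cell in the suspected gap column and thereby manufactures exactly the two-row configuration the hypothesis can act on. Everything else is a direct comparison of interval endpoints.
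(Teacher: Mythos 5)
Your proof is correct. The paper explicitly omits its own proof of this fact as ``straightforward,'' so there is nothing to compare against; your argument fills the gap cleanly, and you correctly identify and handle the one genuinely non-trivial point, namely that the two-row closure condition alone does not force row contiguity and must be combined with the standing assumption that every column of $B$ is nonempty to manufacture the needed two-row configuration.
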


\begin{proposition}\label{prop:skew_shape_necessary}
    Let $B$ be a board with $r$ rows and $c$ columns, and let \[
    \mathcal{B} = \{R(\sigma) \cup C(\sigma): \sigma \in \mathrm{NN}_{B}\},
    \]
    where $\mathrm{NN_{B}}$ denotes the set of non-nesting rook placements on $B$. If $M = ([r+c], \mathcal{B})$ is a matroid with bases $\mathcal{B}$, then the board $B$ must be equal to some skew shape. 
\end{proposition}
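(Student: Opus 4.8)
The plan is to prove the contrapositive: assuming $B$ is \emph{not} a skew shape, I will exhibit two members of $\mathcal{B}$ together with an element witnessing a failure of the basis exchange axiom (2) of \Cref{defn:matroid_defn}, so that $M$ cannot be a matroid. The entry point is \Cref{fact:skew_shape_characterization}: if $B$ is not a skew shape, then there exist cells $(i,j),(k,\ell)\in B$ with $i<k$ and $j<\ell$ for which at least one of the two opposite corners $(i,\ell)$ or $(k,j)$ is absent from $B$. I will fix one such violating rectangle; notably, no minimality assumption on the rectangle is needed, since the negation of \Cref{fact:skew_shape_characterization} already hands us exactly this configuration.

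The key preliminary observation is that every non-nesting placement $\sigma$, with any number of rooks, produces a set $R(\sigma)\cup C(\sigma)$ of size exactly $c$: if $\sigma$ has $m$ rooks then it occupies $m$ rows and $m$ columns, leaving $c-m$ unoccupied columns, so $|R(\sigma)|+|C(\sigma)|=m+(c-m)=c$. In particular every single-rook placement already lies in $\mathcal{B}$, and these are the only placements I will ever need. Concretely I take $B_1:=R(\sigma_1)\cup C(\sigma_1)$ and $B_2:=R(\sigma_2)\cup C(\sigma_2)$ for the single-rook placements $\sigma_1=\{(i,j)\}$ and $\sigma_2=\{(k,\ell)\}$, giving $B_1=\{i\}\cup([r+1,r+c]\setminus\{j\})$ and $B_2=\{k\}\cup([r+1,r+c]\setminus\{\ell\})$, and a direct computation yields $B_1\setminus B_2=\{i,\ell\}$ and $B_2\setminus B_1=\{k,j\}$.

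The heart of the argument is to test an exchange that removes the \emph{column} element. Suppose first that $(i,\ell)\notin B$, and attempt the exchange with $a=\ell\in B_1\setminus B_2$. Replacing $\ell$ by $b=j$ forces the single-rook placement with rook at $(i,\ell)$, while replacing $\ell$ by $b=k$ forces a two-rook placement occupying rows $\{i,k\}$ and columns $\{j,\ell\}$; here the non-nesting condition (columns strictly decreasing as rows increase) forces the rooks to sit at exactly $(i,\ell)$ and $(k,j)$. Both options therefore require $(i,\ell)\in B$, so no admissible $b\in B_2\setminus B_1$ exists and exchange fails. If instead $(k,j)\notin B$, the symmetric computation — swapping the roles of $B_1,B_2$ and taking $a=j\in B_2\setminus B_1$ with $b\in\{i,\ell\}$ — fails for the identical reason. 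In either case axiom (2) is violated, which completes the contrapositive.

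I expect the main subtlety to be the translation between the rook-placement picture and the set picture, and specifically the \emph{asymmetry} between the two elements of $B_1\setminus B_2$: exchanging out the row element $i$ always succeeds, since replacing it by $b=j$ returns the all-columns set $[r+1,r+c]$, which is the image of the empty placement and hence always belongs to $\mathcal{B}$. Thus the failure must be engineered by deleting the column element rather than the row element. The one point requiring genuine care is verifying that the non-nesting constraint leaves a \emph{unique} candidate two-rook placement on rows $\{i,k\}$ and columns $\{j,\ell\}$, namely the one using the two opposite corners $(i,\ell)$ and $(k,j)$, so that a single absent corner suffices to obstruct every possible exchange.
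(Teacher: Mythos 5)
Your proof is correct. It uses the same underlying counterexample as the paper — the two single-rook bases coming from $(i,j)$ and $(k,\ell)$, with the exchange of the \emph{column} element $\ell$ (resp.\ $j$) failing because both candidate replacements force a rook onto the missing corner — but you execute it directly in $M$, whereas the paper first passes to the minor $(M\setminus X)/Y$ with $X=[r]\setminus\{i,k\}$ and $Y=[r+1,r+c]\setminus\{j,\ell\}$, strips loops and coloops, and then exhibits the exchange failure on one of three explicit $2\times 2$-type boards. Your route is more elementary and self-contained: it avoids having to justify that deletion of row indices and contraction of column indices correspond to forbidding rooks in those rows and columns, and it needs no case analysis over the residual boards. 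What the paper's version buys in exchange is a clean localization of the obstruction to a four-element minor (useful for the figures and for the lattice-path analogue mentioned in the subsequent remark). Your two key verifications — that the unique non-nesting placement on rows $\{i,k\}$ and columns $\{j,\ell\}$ is the anti-diagonal one $\{(i,\ell),(k,j)\}$, and that exchanging out the row element always succeeds so the column element is the right witness — are both accurate and are exactly the points that make the argument go through.
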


\begin{proof}
We begin by noting that if $i$ is a row index, then the bases of  $M\setminus i$ correspond to the non-nesting rook placements on $B$ such that row $i$ is not occupied. Similarly, if $j$ is a column index, then the bases of $M/j$ correspond to the non-nesting rook placements on $B$ such that column $j$ is not occupied. 

Now suppose $B$ is not a skew shaped board. Then, by Fact~\ref{fact:skew_shape_characterization}, there exists $(i, j), (k, \ell) \in B$ with $i<k$ and $j<\ell$ such that $(k, j)$ or $(i, \ell)$ do not lie in $B$. Let $M_{1} = (M\setminus X)/Y$ where $X = [r]\setminus \{i, k\}$ and $Y = [r+1, r+c] \setminus \{j, \ell\}$. Let $M_{2}$ be the matroid obtained by deleting all loops and coloops of $M_{1}$. Then, by the observation in the first paragraph of this proof, $M_{2}$ is isomorphic to the matroid $M_{3} = ([4], \mathcal{B}')$ where \[
\mathcal{B}' = \{R(\sigma) \cup C(\sigma): \sigma \in \mathrm{NN}_{B'}\},
\]
and $B'$ is one of the three boards in Figure~\ref{fig:boards_are_skew}. In each case, $\{2,3\}, \{1,4\} \in \mathcal{B}'$ and $3 \in \{2,3\}\setminus \{1,4\}$ but neither of $\{1,2\}$ nor $\{2,4\}$ are valid non-nesting rook placements on any of these boards. 
\end{proof}

\begin{remark}
    A similar strategy can be used to prove a lattice path matroid analogue of the above result. That is, if $B$ is a board and $L_{B}$ is the set of (East steps of) lattice paths beginning at the South-Westernmost corner of $B$ and terminating at the North-Easternmost corner of $B$ such that $L_{B}$ forms the bases of matroid, then $B$ is a skew shaped board.   
\end{remark}

\begin{figure}[!ht]
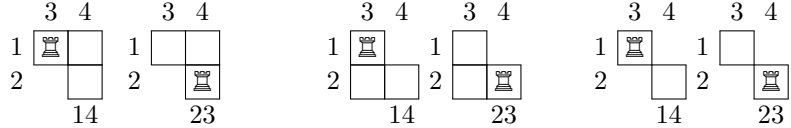

    \centering
    \begin{ytableau} \none & \none[3] & \none[4]  \\ \none[1] & \rook & & \none  \\ \none[2] & \none  & \\ \none & \none & \none[14]  \end{ytableau}
    \hspace{-0.5cm}
  \begin{ytableau} \none & \none[3] & \none[4]  \\ \none[1] &  & & \none  \\ \none[2] & \none  & \rook  \\ \none & \none & \none[23]  \end{ytableau}
\qquad
 \begin{ytableau} \none & \none[3] & \none[4]  \\ \none[1] & \rook  & \none  \\ \none[2] &  &  \\ \none & \none & \none[14]
   \end{ytableau}
   \hspace{-0.25cm}
\begin{ytableau} \none & \none[3] & \none[4]  \\ \none[1] &  & \none  \\ \none[2] &  & \rook  \\ \none & \none & \none[23]  \end{ytableau}
\qquad
\begin{ytableau} \none & \none[3] & \none[4]  \\ \none[1] & \rook  & \none  \\ \none[2] & \none  &  \\ \none & \none & \none[14]
   \end{ytableau}
   \hspace{-0.25cm}
\begin{ytableau} \none & \none[3] & \none[4]  \\ \none[1] &  & \none  \\ \none[2] & \none  & \rook  \\ \none & \none & \none[23]  \end{ytableau}

\caption{For each of the three boards $B'$ above, the collection $\mathcal{B}'$ in the proof of Proposition~\ref{prop:skew_shape_necessary} fails to satisfy the basis-exchange axiom.}
\label{fig:boards_are_skew}
\end{figure}

In what follows, we will be concerned with the positroidal properties of rook matroids. Postnikov defined a number of combinatorial objects parametrizing positroids~\cite{Postnikov2006TotalPositivity}; we will work with Grassmann necklaces. 

\begin{definition}
    Let $k \leq n$ be positive integers. A Grassmann necklace of type $(k, n)$ is a sequence $(I_{1}, \ldots , I_{n})$ of $k$-subsets $I_{i} \in \binom{[n]}{k}$ such that for any $i \in [n]$, \begin{enumerate}
        \item If $i \in I_{i}$, then $I_{i+1} = I_{i} \setminus \{i\} \cup \{j\}$ for some $j \in [n]$.
        \item If $i \notin I_{i}$, then $I_{i+1} = I_{i}$, where $I_{n+1} \coloneqq  I_{1}$.
    \end{enumerate}
\end{definition}

Postnikov showed that Grassmann necklaces give rise to positroids, and vice-versa~\cite{Postnikov2006TotalPositivity}. In the case of rook matroids, the Grassmann necklace has a nice description in terms of certain ``extremal'' rook placements~\cite{alexandersson2024rooks} that we now describe. 

We call such rook placements $i$-extremal because, informally speaking, they are formed by placing rooks as far to the right as possible, in each row of the skew shape. The Grassmann necklaces of rook matroids can be expressed in terms of this set of rook placements.

\begin{figure}[ht]
    \centering
    \begin{subfigure}{0.40\textwidth}
\centering
   \begin{tikzpicture}[inner sep=0in,outer sep=0in]
\node (n) {\begin{varwidth}{6cm}{
\ytableausetup{boxsize=1.25em}
\begin{ytableau} \none & \none[6] & \none[7] & \none[8] & \none[9] & \none[10]  \\ \none[1] & \none & \none & \none &  & \\ \none[2] & \none &  & & & \\ \none[3] &  &  &  & \rook  \\ \none[4] &  & & \rook & \\ \none[5] & & \rook & \\ \end{ytableau}}\end{varwidth}};
\end{tikzpicture}
\end{subfigure}
~
\begin{subfigure}{0.40\textwidth}
\centering
\begin{tikzpicture}[inner sep=0in,outer sep=0in]
\node (n) {\begin{varwidth}{6cm}{
\ytableausetup{boxsize=1.25em}
\begin{ytableau} \none & \none[6] & \none[7] & \none[8] & \none[9] & \none[10]  \\ \none[1] & \none & \none & \none &  & \\ \none[2] & \none & \rook  & & & \\ \none[3] & \rook  &  &  &  \\ \none[4] &  & & & \\ \none[5] & & & \\
\end{ytableau}}\end{varwidth}};
\end{tikzpicture}

\end{subfigure}

    \caption{The $3$-extremal and $8$-extremal non-nesting rook placement on $54421/31$ on the left and right respectively.}
    \label{fig:i_extremal_rook}
\end{figure}
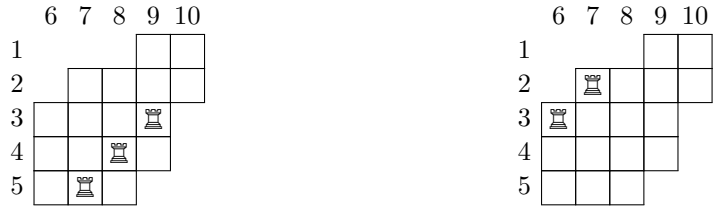

\begin{definition}\cite[Definition 3.33]{alexandersson2024rooks}\label{def:i_extremal_rook_placements} Let $\lambda / \mu$ be a skew shape with $r$ rows and $c$ columns. For $i \in [r+c]$, the \defin{$i$-extremal non-nesting rook placement} on $\lambda / \mu$ is the rook placement obtained by the following procedure.

If $i \in [1, r]$:
\begin{enumerate}
    \item Place a rook in the last cell of row $i$.
    \item For $j \in [i+1, r]$, place a rook in row $j$ as far to the right as possible, while ensuring the non-nesting condition holds. Continue until a rook is placed in the last row $r$ or first column $r+1$. 
\end{enumerate}

If $i \in [r+1, r+c]$:
\begin{enumerate}
    \item Leave column $i$ unoccupied.
    \item For $j \in [i+1, r+c]$, leave column $j$ unoccupied.
    \item For $j \in [1, r]$, place a rook in row $j$ as far to the right as possible while maintaining the unoccupied status of the columns in steps $(1)$ and $(2)$ and the non-nesting, non-attacking nature of the rook configuration.
\end{enumerate}
\end{definition}

The resulting rook placement is non-nesting since the non-nesting condition is maintained at every step. Examples are illustrated in Figure~\ref{fig:i_extremal_rook}.

\begin{proposition}\cite[Proposition 3.34]{alexandersson2024rooks}\label{prop:grassmann_of_rooks}
Let $\rookMat_{\lambda / \mu}$ be the rook matroid on skew shape $\lambda / \mu$ with $r$ rows and $c$ columns. Let $\mathcal{I} = (I_{1}, \ldots , I_{r+c})$ be the associated Grassmann necklace. Then for each $i \in [r+c]$, $I_{i}$ is the $i$-extremal non-nesting rook placement on $\lambda / \mu$.
\end{proposition}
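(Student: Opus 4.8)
The plan is to reduce the statement to the standard description of the Grassmann necklace of a matroid and then check, step by step, that the $i$-extremal placement realizes it. Recall that for a matroid $M$ on the linearly ordered ground set $[n]$, writing $a <_i b$ for the cyclically shifted order in which $i <_i i+1 <_i \cdots <_i n <_i 1 <_i \cdots <_i i-1$, the $i$-th term $I_i$ of its Grassmann necklace is the unique $\le_i$-minimal basis, where $\le_i$ is the Gale (dominance) order induced by $<_i$; equivalently, $I_i$ is produced by the greedy algorithm that scans the ground set in increasing $<_i$-order and adjoins each element whenever the resulting set stays independent. Since rook matroids are already known to be positroids, this is the Grassmann necklace appearing in Proposition~\ref{prop:grassmann_of_rooks}, so it suffices to prove that $R(\rho^{(i)})\cup C(\rho^{(i)})$ is the $\le_i$-minimal basis, where $\rho^{(i)}$ is the $i$-extremal placement of Definition~\ref{def:i_extremal_rook_placements}. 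The tool I would use throughout is an explicit independence oracle: because the bases of $\rookMat_{\lambda/\mu}$ are exactly the sets $R(\rho)\cup C(\rho)$ for $\rho\in\NN_{\lambda/\mu}$, a set $S\subseteq[r+c]$ is independent precisely when some non-nesting placement occupies every row in $S\cap[1,r]$ and leaves unoccupied every column in $S\cap[r+1,r+c]$. Note also that every basis has size equal to the number $c$ of columns.

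First suppose $i\in[1,r]$, so the $<_i$-order scans the rows $i,i+1,\ldots,r$, then the columns $r+1,\ldots,r+c$, and finally the rows $1,\ldots,i-1$. Adjoining a row label records that the row is occupied, while adjoining a column label records that the column is left empty. Processing the rows first, the greedy occupies a row exactly when it can be occupied together with the rows already chosen, and I claim this selects precisely the rows in which the extremal procedure places a rook: the down--left non-nesting placement it builds certifies occupiability of each row it uses, while its termination (reaching row $r$ or column $r+1$) certifies that no lower row can be added without violating the strictly-decreasing-column condition. Once the occupied rows are fixed, the remaining greedy choices concern column labels, and adjoining the smallest available column label means leaving the leftmost possible column empty --- exactly the effect of placing each rook as far to the right as possible. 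Thus the greedy output coincides with $R(\rho^{(i)})\cup C(\rho^{(i)})$, and once $c$ elements have been chosen the basis is full, so the trailing rows $1,\ldots,i-1$ are never adjoined, consistent with the extremal procedure leaving them empty.

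The case $i\in[r+1,r+c]$ is symmetric. Now the $<_i$-order scans the columns $i,i+1,\ldots,r+c$ first; the greedy leaves each of these empty, matching steps $(1)$--$(2)$ of the extremal procedure. It then scans the rows $1,\ldots,r$, occupying them greedily and as far to the right as the emptied columns and the non-nesting condition allow, which is precisely step $(3)$; the leftover columns $r+1,\ldots,i-1$ are processed last, and only the unoccupied ones are adjoined.

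I expect the main obstacle to be the coupling between the two kinds of greedy choices: a priori one might fear that giving up an early occupied row could free the rooks to vacate several even-earlier columns and thereby yield a Gale-smaller basis. Ruling this out is where the non-nesting structure does the real work. I would formalize it with two monotonicity lemmas --- that sliding a rook rightward replaces an unoccupied column label by a strictly smaller one (decreasing the basis in $\le_i$), and that occupying a $<_i$-earlier row in place of a later one likewise decreases it --- together with an exchange argument showing that any non-nesting placement can be driven to $\rho^{(i)}$ by such moves without ever increasing its sorted $<_i$-sequence. Combined with the greedy characterization of the $\le_i$-minimal basis, this identifies $R(\rho^{(i)})\cup C(\rho^{(i)})$ with $I_i$; the bookkeeping for the two termination conditions (row $r$ versus column $r+1$) and for rows that the extremal procedure skips is the one place that requires genuine care.
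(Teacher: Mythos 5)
The paper does not actually prove this proposition --- it is imported verbatim from \cite{alexandersson2024rooks}, so there is no in-text argument to compare against; I will evaluate your plan on its own terms. Your reduction to the greedy/Gale-minimal characterization of $I_i$ is the standard route and the skeleton is sound: the independence oracle, the fact that every basis has size $c$, and the observation that the trailing segment of the $<_i$-order is never reached are all correct. Two points need tightening. First, the single lemma that makes both halves of your greedy verification work is a pointwise domination statement: if the rightmost (extremal) placement on rows $j_1<\cdots<j_k$ uses columns $c_1>\cdots>c_k$, then \emph{any} non-nesting placement occupying those rows uses columns $d_m\le c_m$ for every $m$. This is what shows (i) that a row the extremal procedure cannot occupy --- including rows skipped in the \emph{middle} of the scan, not only those below the termination point, a case your "termination certifies" sentence does not cover --- cannot be occupied by any competing placement, and (ii) that the columns left empty by the extremal placement are exactly the greedily avoidable ones (column $c_k$ cannot be avoided because every competitor has $d_k\le c_k$, and then inductively up the chain). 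Your proposal gestures at this via "monotonicity lemmas" but never isolates it; it should be the centerpiece. Second, the lemma "sliding a rook rightward decreases the basis in $\le_i$" is false as stated when $i$ is a column index and the slide crosses column $i$ (removing a label just after $i$ and inserting one just before $i$ in the cyclic order makes the basis Gale-\emph{larger}); it is only valid here because the bases you compare all have their rooks in columns $<i$. Finally, note that once you invoke the greedy characterization of the Gale-minimal basis, the exchange-move machinery and the worry about "giving up an early row to free earlier columns" are unnecessary: the matroid greedy theorem, not the non-nesting structure, already guarantees that processing rows before columns is Gale-optimal; the non-nesting structure is needed only to identify the greedy output with the extremal placement.
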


We will use consequences of this construction repeatedly in the following subsections. To ease readability, we make the following definition and corollary. 

\begin{definition}\label{def:ell_ab_definition}
    Given a column index $a$ and row index $b$, we define \defin{$\ell_{a, b}$} to be the size of the set $I_{a}\cap [1,b]$: \[
    \ell_{a, b} \coloneqq |I_{a} \cap [1, b]|.
    \]
Similarly, given a row index $a$ and column index $b$, we define \defin{$m_{a, b}$} to be the size of the set $I_{a}\cap [r+1,b]$: \[
    m_{a, b} \coloneqq |I_{a} \cap [r+1, b]|.
    \]
\end{definition}

In words, $\ell_{a, b}$ equals the number of rooks of $\rho_{a}$ lying in the rows $1$ to $b$, while $m_{a, b}$ equals the number of unoccupied columns of $\rho_{a}$ in the range of columns $[r+1, b]$. When $a$ is a column index, the first rook of $\rho_{a}$ occurs in column $a-1$, and the following corollary holds. 

\begin{corollary}\label{cor:ell_ab_characterization}
Given a column index $a$ and row index $b$, we have $\ell_{a, b} =  0 \implies (b, a-1) \notin \lambda/\mu$. 
\end{corollary}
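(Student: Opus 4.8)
The plan is to establish the contrapositive and read the conclusion directly off the greedy construction of the $a$-extremal placement recorded in Definition~\ref{def:i_extremal_rook_placements}. Since $a$ is a column index, Proposition~\ref{prop:grassmann_of_rooks} identifies $I_a$ with the $a$-extremal non-nesting rook placement $\rho_a$, so that $\ell_{a,b}=|I_a\cap[1,b]|$ counts the rooks of $\rho_a$ lying in rows $1,\dots,b$. Recall that in building $\rho_a$ we first forbid the columns $a,a+1,\dots,r+c$ and then sweep through the rows $1,2,\dots,r$ from top to bottom, placing in each row a rook as far to the right as possible among the \emph{allowed} columns $r+1,\dots,a-1$, subject to the non-nesting and non-attacking conditions. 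The key observation is that column $a-1$ is always allowed, since only columns with label $\ge a$ are forbidden.

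First I would assume, toward a contradiction, that $\ell_{a,b}=0$ while $(b,a-1)\in\lambda/\mu$. The hypothesis $\ell_{a,b}=0$ says that none of the rows $1,\dots,b$ is occupied by $\rho_a$; in particular no rook has been placed when the greedy sweep reaches row $b$. I would then examine row $b$ itself: the cell $(b,a-1)$ lies in the board by assumption, and its column $a-1$ is allowed. Because no rook sits in any row above $b$, placing a rook at $(b,a-1)$ violates neither the non-attacking nor the non-nesting condition (both are vacuous with no earlier rook present). Hence the procedure must place a rook in row $b$ --- in fact at the rightmost allowed cell, which is exactly $(b,a-1)$, as every cell to its right is forbidden --- so row $b$ is occupied. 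This contradicts $\ell_{a,b}=0$, and therefore $(b,a-1)\notin\lambda/\mu$.

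The one boundary case to note is $a=r+1$, where $a-1=r$ is a row label rather than a column label, so $(b,a-1)$ is never a cell of the board and the statement holds vacuously. I do not expect a genuine obstacle here: the only points requiring care are the bookkeeping identification of $\ell_{a,b}=0$ with ``the top $b$ rows of $\rho_a$ are empty'', and the verification that an empty region above row $b$ leaves the greedy step at row $b$ completely unconstrained, so that the mere existence of the allowed cell $(b,a-1)$ forces a rook into row $b$. A slightly more structural variant of the same argument, which one could use instead, records that the topmost rook of $\rho_a$ sits at the top of column $a-1$ (since every row strictly above that is cut off on the left by $\mu$ and hence meets only forbidden columns $\ge a$); this likewise places a rook in some row $\le b$ whenever $(b,a-1)\in\lambda/\mu$.
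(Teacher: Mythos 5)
Your argument is correct and matches the paper's (implicit) justification: the paper states the corollary as an immediate consequence of the observation that the first rook of the $a$-extremal placement $\rho_a$ sits at the topmost cell of column $a-1$, which is exactly the content of your greedy-sweep contradiction and of the ``structural variant'' you sketch at the end. Your handling of the boundary case $a=r+1$ and the check that $(b,a-1)$ is the rightmost allowed cell of row $b$ are fine.
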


In the subsections that follow we will use the Grassmann necklace perspective to move between the rank conditions on subsets of the rook matroid and the rook placements themselves.

\subsection{Rook matroids are positroids: a new proof}
In \cite{alexandersson2024rooks}, Proposition~\ref{prop:grassmann_of_rooks} together with Oh's characterization of positroids~\cite{Oh2011Positroids} was used to establish the fact that every rook matroid is a positroid. While the case of lattice path matroids can be seen straightforwardly (by \cite[Theorem 5.2]{Blum01BaseSortableMatroids} or \cite[Theorem 4.6]{LamPostnikov2024Polypositroids}), that of rook matroids required more work. In this subsection, we furnish a new proof of this fact below, using instead the concept of sort-closed matroids, a class of matroids that is known to coincide with positroids, by work of Lam and Postnikov \cite[Corollary 9.4]{LamPostnikov2024Polypositroids}. The advantage of this is a more transparent proof of the positroid structure and an explicit interpretation of the sorting operators (defined below) in terms of rook placements. We first recall the definition of a sort-closed collection of sets. 

\begin{definition}\label{defn:sort_closed}
Let $I, J \in \binom{[n]}{k}$. Consider $I \cup J$ as the ordered multiset \[I \cup J = \{a_{1}\leq b_{1}\leq a_{2}\leq b_{2} \leq \ldots  a_{k}\leq b_{k}\}.\] Define two new subsets $\text{sort}_{1}(I, J)$ and $\text{sort}_{2}(I, J)$ by $\text{sort}_{1}(I, J) =  \{a_{1}, \ldots a_{k}\}$ and $\text{sort}_{2}(I, J) =  \{b_{1}, \ldots b_{k}\}$. Given a collection of subsets $\mathcal{F}\subset \binom{[n]}{k}$, we say that $\mathcal{F}$ is sort-closed if $I, J \in \mathcal{F}$ implies $\text{sort}_{1}(I, J), \, \text{sort}_{2}(I, J) \in \mathcal{F}$.
\end{definition}

A matroid $M = ([n], \mathcal{B})$ is said to be \textit{sort-closed }if its collection of bases $\mathcal{B}$ is sort-closed. The broader context where sort-closed collections naturally arise is that of alcoved polytopes, see~\cite{LamPostnikov07AlcovedPolytopesOne,BraunSolus2018rStableHypersimplices}. 

\begin{definition}\label{definition:some_rook_definitions}
We will use the following terminology concerning rook placements.
\begin{enumerate}
    \item A \defin{double rook placement} is a placement on $\lambda/\mu$ of two sets of rooks  --- distinguished by different colors--- on $\lambda/\mu$. No restriction is made on either set to be in a non-attacking configuration.
    \item Given two non-nesting rook placements $\rho_{1}$ and $\rho_{2}$ on $\lambda/\mu$, the collection $\rho_{1} \cup \rho_{2}$ is a \defin{double non-nesting rook placement}. Two rooks of different colors might share the same square or appear in the same row or same column. The only nestings will be between rooks of different colors.
    \item A pair of rooks $(i, j)$ and $(k, \ell)$ is \defin{strictly nested}, if one rook lies strictly South-East of another, that is $i<k$ and $j<\ell$.
     \item We \defin{uncross} a strictly nested pair of rooks by replacing the above pair by $(k, j)$ and $(i, \ell)$ respectively.
\end{enumerate}
See Figures~\ref{fig:double_rook2} and \ref{fig:double_rook1} for a double rook placement and double non-nesting rook placement respectively. In Figure~\ref{fig:double_rook1}, the pair of rooks in $(2, 8)$ and $(5, 10)$ is strictly nested, while in Figure~\ref{fig:double_rook11} this pair has been uncrossed. 
\end{definition}

We first describe how to take a double non-nesting rook placement and convert it to a double rook placement with special properties. We consider each rook placement $\rho_{1}$ and $\rho_{2}$ below as representing distinct colors.

\begin{definition}\label{definiton:uncrossed_double_rook}
Let $\rho_{1}\cup \rho_{2}$ be a double non-nesting rook placement on $\lambda/\mu$. Consider the following procedure: \begin{enumerate}
    \item For each row $i$ of $\lambda/\mu$, scan row $i$ of $\lambda/\mu$ from right to left. For each
    rook $s = (i, j)$ encountered, if $s$ strictly nests a rook  of a \emph{different} color, then uncross $s, t$ where $t = (k, \ell)$ and $k>i$ is the first row index after $i$ containing a rook that is nested by $s$. Repeat until there are no strictly nested rooks of different colors. 
    \item Perform step (1) again, this time only uncrossing any pairs of strictly nested rooks of the \emph{same} color that might have formed as a result of step (1). 
\end{enumerate} 
Let \emph{$\mathrm{uncross}(\rho_{1}, \rho_{2})$} be the resulting double rook placement on $\lambda/\mu$. The output of this operation is uniquely defined.
\end{definition}

That $\text{uncross}(\rho_{1}, \rho_{2})$  indeed lies on $\lambda/\mu$ arises from the fact that the board is a skew shape, and hence by Fact~\ref{fact:skew_shape_characterization}, every uncrossing of strictly nested rooks moves the strictly nested rooks to a pair of cells that lie on the board. We assign numbers to the rooks of $Z$ in the following manner.

\begin{definition}\label{definition:numbered_uncrossing}
    If $Z = \text{uncross}(\rho_{1}, \rho_{2})$ has $n$ rooks, the \defin{numbered uncrossing $Y$} is an assignment of the numbers $1$ to $n$ to the rooks of $Z$ via the following procedure: starting with the first row, scan each row of $\lambda/\mu$ from right to left and number the rooks of $Z$ that are encountered in this order. If two rooks share the same square assign the lower number to the black rook. 
\end{definition}

An example of the uncrossing procedure is illustrated in Figure~\ref{fig:uncrossing_example_rook}. Note that after step (1), we still have a strictly nested pair of rooks of the same color in cells $(3,8)$ and $(4,9)$, but after step (2), $Z$ has no strictly nested rooks. 

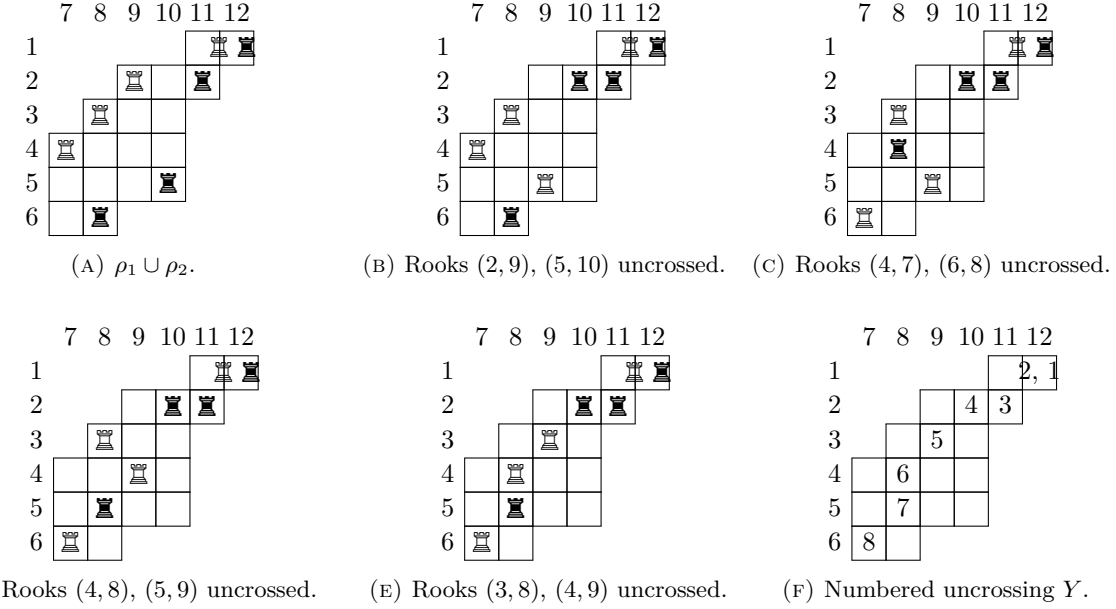
\begin{figure}[ht!]
    \centering
    \begin{subfigure}{0.30\textwidth}
\centering
   \begin{tikzpicture}[inner sep=0in,outer sep=0in]
\node (n) {\begin{varwidth}{6cm}{
\ytableausetup{boxsize=1.25em}
 \begin{ytableau} \none & \none[7] & \none[8] & \none[9] & \none[10] & \none[11] & \none[12]  \\ \none[1] & \none & \none & \none & \none &  & \rook \scalebox{0.5}{\blackrook}  \\ \none[2] & \none & \none & \rook  &  & \scalebox{0.5}{\blackrook}  \\ \none[3] & \none & \rook &  &  \\ \none[4] & \rook  &  &  &  \\ \none[5] &  &  &  &  \scalebox{0.5}{\blackrook} \\ \none[6] &  & \scalebox{0.5}{\blackrook} \\ \end{ytableau}}\end{varwidth}};
\end{tikzpicture}
\caption{$\rho_{1}\cup \rho_{2}$.}
    \label{fig:double_rook1}
\end{subfigure}
~
\hspace{0.2 cm}
\begin{subfigure}{0.30\textwidth}
\centering
\begin{tikzpicture}[inner sep=0in,outer sep=0in]
\node (n) {\begin{varwidth}{6cm}{
\ytableausetup{boxsize=1.25em}
 \begin{ytableau} \none & \none[7] & \none[8] & \none[9] & \none[10] & \none[11] & \none[12]  \\ \none[1] & \none & \none & \none & \none &  & \rook \scalebox{0.5}{\blackrook}  \\ \none[2] & \none & \none &  & \scalebox{0.5}{\blackrook} & \scalebox{0.5}{\blackrook}  \\ \none[3] & \none & \rook &  &  \\ \none[4] & \rook  &  &  &  \\ \none[5] &  &  & \rook &  \\ \none[6] &  & \scalebox{0.5}{\blackrook} \\ \end{ytableau}
}\end{varwidth}};
\end{tikzpicture}
\caption{Rooks $(2, 9)$, $(5, 10)$ uncrossed.}
    \label{fig:double_rook11}
\end{subfigure}
~
\begin{subfigure}{0.30\textwidth}
\centering
\begin{tikzpicture}[inner sep=0in,outer sep=0in]
\node (n) {\begin{varwidth}{6cm}{
\ytableausetup{boxsize=1.25em}
 \begin{ytableau} \none & \none[7] & \none[8] & \none[9] & \none[10] & \none[11] & \none[12]  \\ \none[1] & \none & \none & \none & \none &  & \rook \scalebox{0.5}{\blackrook}  \\ \none[2] & \none & \none &  & \scalebox{0.5}{\blackrook} & \scalebox{0.5}{\blackrook}  \\ \none[3] & \none & \rook &  &  \\ \none[4] &   & \scalebox{0.5}{\blackrook}   &  &  \\ \none[5] &  &  & \rook &  \\ \none[6] & \rook & \\ \end{ytableau}
}\end{varwidth}};
\end{tikzpicture}
\caption{Rooks $(4, 7)$, $(6, 8)$ uncrossed.}
    \label{fig:double_rook12}
\end{subfigure}
~
\\
\vspace{0.5cm}
\begin{subfigure}{0.30\textwidth}
\centering
\begin{tikzpicture}[inner sep=0in,outer sep=0in]
\node (n) {\begin{varwidth}{6cm}{
\ytableausetup{boxsize=1.25em}
 \begin{ytableau} \none & \none[7] & \none[8] & \none[9] & \none[10] & \none[11] & \none[12]  \\ \none[1] & \none & \none & \none & \none &  & \rook \scalebox{0.5}{\blackrook}  \\ \none[2] & \none & \none &  & \scalebox{0.5}{\blackrook}  & \scalebox{0.5}{\blackrook}  \\ \none[3] & \none & \rook &  &  \\ \none[4] &  &   & \rook &  \\ \none[5] &  & \scalebox{0.5}{\blackrook} &   &  \\ \none[6] & \rook  &  \\ \end{ytableau}}\end{varwidth}};
\end{tikzpicture}
\caption{Rooks $(4, 8)$, $(5, 9)$ uncrossed.}
    \label{fig:double_rook2}
\end{subfigure}
~
\hspace{0.2 cm}
\begin{subfigure}{0.30\textwidth}
\centering
\begin{tikzpicture}[inner sep=0in,outer sep=0in]
\node (n) {\begin{varwidth}{6cm}{
\ytableausetup{boxsize=1.25em}
 \begin{ytableau} \none & \none[7] & \none[8] & \none[9] & \none[10] & \none[11] & \none[12]  \\ \none[1] & \none & \none & \none & \none &  & \rook \scalebox{0.5}{\blackrook}  \\ \none[2] & \none & \none &  & \scalebox{0.5}{\blackrook}  & \scalebox{0.5}{\blackrook}  \\ \none[3] & \none &  & \rook &  \\ \none[4] &  & \rook   & &  \\ \none[5] &  & \scalebox{0.5}{\blackrook} &   &  \\ \none[6] & \rook  &  \\ \end{ytableau}}\end{varwidth}};
\end{tikzpicture}
\caption{Rooks $(3,8)$, $(4,9)$ uncrossed.}
    \label{fig:double_rook3}
\end{subfigure}
~
\begin{subfigure}{0.30\textwidth}
\centering
\begin{tikzpicture}[inner sep=0in,outer sep=0in]
\node (n) {\begin{varwidth}{6cm}{
\ytableausetup{boxsize=1.25em}
 \begin{ytableau} \none & \none[7] & \none[8] & \none[9] & \none[10] & \none[11] & \none[12]  \\ \none[1] & \none & \none & \none & \none &  & \tiny $2, 1$ \\ \none[2] & \none & \none &  & $4$ & $3$  \\ \none[3] & \none &  & $5$ &  \\ \none[4] &  & $6$  &  &  \\ \none[5] &  & $7$ &  &  \\ \none[6] & $8$  &  \\ \end{ytableau}}\end{varwidth}};
\end{tikzpicture}
\caption{Numbered uncrossing $Y$.}
    \label{fig:double_rook4}
\end{subfigure}
    \caption{On skew shape $654442/421$: (A) a double non-nesting rook placement $\rho_{1}\cup \rho_{2}$; (B) - (D) double rook placements obtained after uncrossing a strictly nested pair of rooks in the preceding figure; (E) $Z = \text{uncross}(\rho_{1}, \rho_{2})$ obtained after step (2) of uncrossing procedure; (F) Numbering rooks of $Z$.}
    \label{fig:uncrossing_example_rook}
\end{figure}

The double rook placement $Z = \text{uncross}(\rho_{1}, \rho_{2})$ might have two rooks of the \emph{same} color in the same row, or same column, but not in the same square. Different colored rooks might share the same square. We gather some consequences of the uncrossing procedure in the following lemma. 

\begin{lemma}\label{lem:properties_of_uncross}
Let $\rho_{1}, \rho_{2}$ be non-nesting rook placements on $\lambda/\mu$ represented by sets $I$ and $J$ respectively, and let $Z = \mathrm{uncross}(\rho_{1}, \rho_{2})$.  
\begin{enumerate}
    \item The element $i$ is a row index of $I \cup J$ appearing once (resp. twice) if and only if $Z$ has one (resp. two) rooks in row $i$.
    \item The element $j$ is a column index of $I \cup J$ appearing once (resp. twice) if and only if $Z$ has one (resp. no) rook in column $i$. Also, $j \notin I \cup J$ if and only if $Z$ has two rooks in column $j$.  
    \item The double rook placement $Z$ contains no pair of strictly nested rooks. 
    \item If $x$ is a rook in the numbered uncrossing $Y$ of $Z$, then the rook $x+1$ occurs to left of $x$ in the same row, below $x$ in the same column, or strictly South-West of $x$. 
\end{enumerate}
\end{lemma}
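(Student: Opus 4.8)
The plan is to exploit a single invariant of the uncrossing operation: replacing a strictly nested pair $(i,j),(k,\ell)$ by $(k,j),(i,\ell)$ leaves the multiset of occupied \emph{rows} and the multiset of occupied \emph{columns} of the double placement unchanged. Consequently the row- and column-multisets of $Z$ agree with those of $\rho_1\cup\rho_2$. For (1), the multiplicity of a row index $i$ in $I\cup J$ is $[\,i\in R(\rho_1)\,]+[\,i\in R(\rho_2)\,]$, which is exactly the number of rooks of $\rho_1\cup\rho_2$ — hence of $Z$ — in row $i$; this gives the stated once/twice dichotomy. For (2), recall that the column part of $I$ (resp.\ $J$) records the \emph{unoccupied} columns, so the multiplicity of a column index $j$ in $I\cup J$ equals $2$ minus the number of rooks of $\rho_1\cup\rho_2$ in column $j$. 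Reading this identity the three ways (multiplicity $2,1,0$) yields $0,1,2$ rooks of $Z$ in column $j$, which is precisely (2), including the final clause about $j\notin I\cup J$.

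For (3) I would first record that the procedure terminates: the potential $\Phi=\sum_{\text{rooks of }Z}(\text{row})\cdot(\text{column})$ strictly decreases at every uncrossing, since replacing $(i,j),(k,\ell)$ by $(k,j),(i,\ell)$ changes $\Phi$ by $(k-i)(j-\ell)<0$ when $i<k$ and $j<\ell$. As $\Phi$ is a bounded-below integer quantity, only finitely many uncrossings occur. By construction step (1) halts only when there are no strictly nested \emph{bichromatic} pairs, and step (2) halts only when there are no strictly nested \emph{monochromatic} pairs; so it remains to show that step (2) never re-creates a bichromatic nesting, for then at the end both types are absent. I would prove this as an invariant maintained through step (2): assume no bichromatic nesting exists and uncross a monochromatic (say black) pair $(i,j),(k,\ell)\mapsto(k,j),(i,\ell)$. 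Any newly created bichromatic nesting must pair a white rook $w=(p,q)$ with one of the two new black cells $(i,\ell)$ or $(k,j)$; a short check on the four possible relative positions of $w$ shows that each case forces $w$ to have already strictly nested with an \emph{old} black cell $(i,j)$ or $(k,\ell)$, contradicting the hypothesis. Hence the invariant persists, and after step (2) the placement $Z$ has no strictly nested pairs. This case analysis is the one genuinely delicate point, so I would write the four subcases out carefully.

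Finally (4) follows from (3) together with the reading order defining $Y$ (rows top to bottom, within a row right to left, black before a co-located rook). Let $x=(i,j)$ and $x+1=(i',j')$ be consecutive. If $i'=i$ then $x+1$ is the next rook leftward, so $j'\le j$, giving ``$x+1$ to the left of $x$ in the same row'' (the equality $j'=j$ being the degenerate case of two rooks sharing a cell). If $i'>i$ then $x$ is the leftmost rook of row $i$ and $x+1$ the rightmost rook of row $i'$; since (3) forces every column in an upper row to be at least every column in a lower row, we get $j'=\max(\text{cols in row }i')\le\min(\text{cols in row }i)=j$. Thus $j'=j$ gives ``$x+1$ below $x$ in the same column'' and $j'<j$ gives ``$x+1$ strictly South-West of $x$'', completing (4). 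The main obstacle throughout is the step~(2) invariance argument in (3); parts (1), (2), (4) are then routine consequences of the multiset invariance, the termination bound from $\Phi$, and the non-nesting structure of $Z$.
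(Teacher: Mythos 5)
Your proof is correct and follows essentially the same route as the paper's: parts (1) and (2) via the invariance of row-occupancy and column-occupancy under each uncrossing step, part (3) from the two-step structure of the procedure, and part (4) from (3) together with the reading order defining $Y$. The one place you go beyond the paper is in (3), where you supply a termination argument (the potential $\Phi$) and verify that step (2) cannot re-create a bichromatic nesting --- points the paper's proof asserts without justification --- and your four-subcase check there (each new bichromatic nesting with $(k,j)$ or $(i,\ell)$ forces a pre-existing one with $(i,j)$ or $(k,\ell)$) is correct.
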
 

\begin{proof}
First observe that at each step in the procedure to obtain $Z$ from $\rho_{1} \cup \rho_{2}$, the occupancy of rows and non-occupancy of columns of $\lambda/\mu$ is not changed. It thus suffices to prove properties (1) and (2) for the double non-nesting rook placement $\rho_{1} \cup \rho_{2}$. For (1), by definition of $I, J$ as bases of a rook matroid, if $i$ is a row index, then an occurrence of $i$ in $I \cup J$ corresponds to either $\rho_{1}$ or $\rho_{2}$ or both having a rook in row $i$. For (2), let $j \in I\cup J$ be a column index appearing once. This implies that one rook placement has no rook in column $j$, while the other does. If $j$ appears twice then neither of $\rho_{1}$ nor $\rho_{2}$ have a rook in row $j$. Similarly, if $j \notin I \cup J$, then each of $\rho_{1}$ and $\rho_{2}$ must have a rook in column $j$. For (3), note that the first step in producing $Z$ leaves no pair of strictly nested rooks of different colors, while the second step leaves no strictly nested rooks of the same colors. Finally for (4), note that this follows from the absence of strictly nested rooks in $Z$, by (3). 
\end{proof}

We are now ready to prove the main result of this subsection. 

\begin{theorem}\label{prop:rooks_sort_closed}
    Let $\lambda/\mu$ be a skew shape and let $\rookMat_{\lambda / \mu}$ be the corresponding rook matroid. Then $\rookMat_{\lambda / \mu}$ is a sort-closed matroid. In particular, $\rookMat_{\lambda /\mu}$ is a positroid. 
\end{theorem}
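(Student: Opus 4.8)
The plan is to realize the two sorted sets as the set representations of two honest non-nesting rook placements extracted from the numbered uncrossing. Given bases $I, J \in \mathcal{B}$ coming from non-nesting placements $\rho_1, \rho_2$, I would form $Z = \mathrm{uncross}(\rho_1, \rho_2)$ and its numbered uncrossing $Y$ from Definition~\ref{definition:numbered_uncrossing}, whose rooks are labelled $1, \ldots, s$. Here $s$ is the number of rooks of $Z$, which by Lemma~\ref{lem:properties_of_uncross}(1) equals the number of row-indices of $I \cup J$ counted with multiplicity. I would then set $\sigma_1$ to be the rooks of $Z$ carrying odd labels and $\sigma_2$ those carrying even labels, and aim to prove $\sigma_1, \sigma_2 \in \NN_{\lambda/\mu}$ with $R(\sigma_1)\cup C(\sigma_1) = \text{sort}_1(I,J)$ and $R(\sigma_2)\cup C(\sigma_2) = \text{sort}_2(I,J)$. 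Sort-closedness of $\mathcal{B}$ is then immediate, and $\rookMat_{\lambda/\mu}$ is a positroid by the Lam--Postnikov equivalence of sort-closed matroids and positroids~\cite[Corollary 9.4]{LamPostnikov2024Polypositroids}.

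First I would verify that $\sigma_1$ and $\sigma_2$ are genuine non-nesting placements. Since $I \cup J$ is a union of two sets, Lemma~\ref{lem:properties_of_uncross}(1)--(2) shows every row and every column of $Z$ carries at most two rooks. The key observation is that two rooks sharing a row, or sharing a column, always receive consecutive labels in $Y$: for a shared row this is immediate from the right-to-left scan (with the tie-breaking rule when they occupy the same cell), and for a shared column it follows because any rook labelled strictly between them would, by the weakly South-West monotonicity of the labelling in Lemma~\ref{lem:properties_of_uncross}(4), have to lie in that same column, forcing a third rook into a column --- impossible. Consecutive labels have opposite parity, so the two rooks of any doubly-occupied row or column are split between $\sigma_1$ and $\sigma_2$; hence each $\sigma_i$ is non-attacking. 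As $Z$ has no strictly nested pair by Lemma~\ref{lem:properties_of_uncross}(3), neither does any subset, so $\sigma_1, \sigma_2$ are non-nesting, and both lie on $\lambda/\mu$ because $Z$ does.

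Next I would identify the set representations. Write $I \cup J = \{f_1 \le \cdots \le f_{2c}\}$, so that $\text{sort}_1(I,J) = \{f_t : t \text{ odd}\}$ and $\text{sort}_2(I,J) = \{f_t : t \text{ even}\}$; since every row-index is smaller than every column-index, the first $s$ entries $f_1, \ldots, f_s$ are exactly the occupied rows (with multiplicity). Because labels in $Y$ increase as one moves down the rows, rook $t$ occupies row $f_t$ for $t \le s$, so the occupied rows of $\sigma_1$ are precisely $\{f_t : t \le s,\ t \text{ odd}\}$, the row part of $\text{sort}_1$, and likewise for $\sigma_2$. For the columns I would argue case-by-case on the multiplicity of a column index $j$ in $I \cup J$, which by Lemma~\ref{lem:properties_of_uncross}(2) is $2, 1, 0$ according as $Z$ has $0, 1, 2$ rooks in column $j$. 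The multiplicity-zero and multiplicity-two cases are pure bookkeeping: a column with no rook is unoccupied by both $\sigma_i$ and contributes one copy of $j$ to each of $\text{sort}_1, \text{sort}_2$, while a column with two rooks (consecutively labelled, hence split) is occupied by both $\sigma_i$ and contributes nothing to either.

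The main obstacle --- and the only place a genuine computation is needed --- is the multiplicity-one case, where $j$ sits at a single position $t_0$ of the sorted union and column $j$ carries a single rook with label $t_1$. Then $j \in C(\sigma_1)$ exactly when that rook is even-labelled, whereas $j \in \text{sort}_1$ exactly when $t_0$ is odd, so I must show $t_0$ and $t_1$ have opposite parity. Using that labels are weakly decreasing in the column index (Lemma~\ref{lem:properties_of_uncross}(4)), the rook in column $j$ is preceded precisely by the rooks in columns $> j$, giving $t_1 = 1 + \sum_{j' > j}(2 - \mathrm{mult}_{j'})$, where $\mathrm{mult}_{j'}$ denotes the multiplicity of $j'$ in $I \cup J$. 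Substituting $t_0 = s + \sum_{r+1 \le j' \le j}\mathrm{mult}_{j'}$ together with the total column count $\sum_{j' \text{ column}}\mathrm{mult}_{j'} = 2c - s$ collapses this to $t_1 = t_0 + 1 + 2(r-j)$, so $t_1 \equiv t_0 + 1 \pmod 2$. This single parity identity supplies the required matching for both $\sigma_1$ and $\sigma_2$ simultaneously, completing the identification of the set representations and hence the proof.
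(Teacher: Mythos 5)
Your construction is exactly the paper's: form $Z=\mathrm{uncross}(\rho_1,\rho_2)$, number its rooks via the numbered uncrossing $Y$, and split by parity of label into $\mathrm{odd}(Z)$ and $\mathrm{even}(Z)$; the verification that these are non-nesting placements on $\lambda/\mu$ and that the row part of the set identity holds also matches the paper step for step. Where you genuinely diverge is the column part of the identity $\text{sort}_1(I,J)=R(\sigma_1)\cup C(\sigma_1)$: the paper proves it by induction along the unoccupied columns of $\sigma_1$, with a two-case analysis according to whether the element preceding $a_{\ell+1}$ in the sorted multiset is a row or a column index, tracking parities column by column. You instead derive the closed-form identity $t_1=t_0+1+2(r-j)$ by counting rooks in columns to the right of $j$ (using the weak monotonicity of column indices along the labelling, Lemma~\ref{lem:properties_of_uncross}(4)) and comparing with the position of $j$ in the sorted multiset. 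I checked the computation: with $B=\sum_{j'>j}\mathrm{mult}_{j'}$ one gets $t_0=2c-B$ and $t_1=1+2(r+c-j)-B$, so the identity holds and the required opposite-parity matching follows for both sorted sets at once. This is a correct and cleaner alternative to the paper's inductive case analysis; it buys a uniform one-line parity argument at the cost of nothing, whereas the paper's version makes the column-by-column structure of $Y$ more visible. Your observation that two rooks sharing a column must carry consecutive labels (any intermediate label would force a third rook into that column) is also a slightly more explicit justification of non-attackingness than the paper gives.
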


\begin{proof}
    Let $I, J$ be bases of $\rookMat_{\lambda/\mu}$ with corresponding non-nesting rook placements $\rho_{1}, \rho_{2}$ respectively. Let $\rho_{1}$ and $\rho_{2}$ be identified with the colors white and black respectively. Consider the double rook placement $Z = \mathrm{uncross}(\rho_{1}, \rho_{2})$, and the numbered uncrossing $Y$ corresponding to $Z$.  Let $\rho$ be the rook placement on $\lambda/\mu$ consisting of the odd-indexed rooks of $Y$ and let $\sigma$ is the rook placement consisting of the even-indexed rooks of $Y$. We will denote these by $\rho = \text{odd}(Z)$ and $\sigma = \text{even}(Z)$ respectively. See Figure~\ref{fig:sort_closed_procedure} for an example of the double non-nesting rook placement $\rho_{1} \cup \rho_{2}$, the double rook placement $Z$, its numbered counterpart $Y$, and the non-nesting rook placement $\rho = \text{odd}(Z)$. We will be done if we can prove the following claim.

\textbf{Claim:} Both $\rho$ and $\sigma$ are non-nesting rook placements on $\lambda/\mu$. They satisfy
\begin{align}
\text{sort}_{1}(I, J) &= R(\rho) \cup C(\rho),\label{eq:sort_1}\\
\text{sort}_{2}(I, J) &= R(\sigma) \cup C(\sigma) \label{eq:sort_2}.
\end{align}

First, observe that $\rho$ is indeed a non-attacking rook placement on $\lambda/\mu$: this follows from the fact that $\rho$ consists of the odd-numbered rooks of $Z$ and that each row and each column of $Z$ contains at most two rooks. The non-nesting nature of $\rho$  follows from the previous sentence together with Lemma~\ref{lem:properties_of_uncross} (3), which tells us that $Z$ contains no pair of strictly nested rooks. The same argument holds for $\sigma$. 

Regarding the set equalities in the claim, we will prove only equation~(\ref{eq:sort_1}), since (\ref{eq:sort_2}) follows by an analogous argument, considering even-indexed rooks of $Z$ instead of odd. To this end, it suffices to show that every row index in $R(\rho)$ and column index in $ C(\rho)$ corresponds to the odd-indexed elements in the sorted multiset $I \cup J$. Begin by noting that the double rook placement $Z$ represents the sorted multiset $I \cup J$. Suppose that the skew shape has $c$ columns and that\[
I \cup J = \{a_{1}\leq b_{1}\leq a_{2}\leq b_{2} \leq \cdots \leq a_{c}\leq b_{c}\}, \quad \text{and} \quad \text{sort}_{1}(I, J) = \{a_{1}\leq \cdots \leq a_{c}\}.
\]
Let the occupied row indices and unoccupied column indices of $\rho$ respectively be $ R(\rho) = \{t_{1}< \ldots < t_{\ell}\}$ and $C(\rho) = \{t_{\ell+1}< \ldots < t_{c}\}$. Let $i \in [\ell]$ and consider $a_{i}$, a row index of $I \cup J$. Then, by Lemma~\ref{lem:properties_of_uncross}~(1), $Z$ has a rook in row $a_{i}$ that has index, say $x$, in the numbered uncrossing $Y$. Since $a_{i}$ has odd index in the sorted multiset $I \cup J$, $x$ must be odd and hence $\rho$ has a rook in row $a_{i}$. By the same argument, we can show that since $\rho = \text{odd}(Z)$, every row occupied by a rook of $\rho$ arises from an odd-indexed element in the multiset $I \cup J$. This shows that $a_{i} = t_{i}$ for $i=1, \ldots , \ell$. 

Now suppose $j \in [\ell +1, c]$. We will show that $a_{j} =  t_{j}$ by induction. For the base case, consider the column index $a_{\ell +1}$; we need to show that $a_{\ell+1} \in C(\rho)$. Suppose $a_{\ell+1}$ occurs twice in $I\cup J$. By Lemma~\ref{lem:properties_of_uncross}~(2), $Z$ has no rook in column $a_{\ell+1}$, and since $\rho$ is formed by taking a subset of the rooks in $Z$, neither does $\rho$. Now suppose $a_{\ell+1}$ occurs exactly once. By Lemma~\ref{lem:properties_of_uncross}~(2), $Z$ has exactly one rook in column $a_{\ell+1}$. We need to show that the index of that rook in $Y$ is even. Two cases arise, either (a) $b_{\ell}$ is a row index, or (b) $b_{\ell}$ is a column index. 

\textbf{Case (a):} Here, if $b_{\ell}$ is a row index, then $a_{\ell+1}$ is the leftmost column index of $Z$ containing exactly one rook, say $s$. Let the rook in row $b_{\ell}$ be $t$. Then $t$ must have even index in $Y$. Now, if $t$ occurs in column $a_{\ell +1}$, then $s = t$ and hence $a_{\ell+1} \in C(\rho)$ and we are done. This is shown in Figure~\ref{fig:double_rook3} ($a_{\ell} = 5, b_{\ell} = 6, a_{\ell +1} = 7$). Now, suppose the column index of $t$ is some  $q < a_{\ell+1}$. By definition of $a_{\ell +1}$, the column $q$ contains two rooks, the first of which occurs at $(a_{\ell}, q)$ and hence has odd index in $Y$. Now every column from $q$ to $a_{\ell+1}-1$ also has two rooks. By Lemma~\ref{lem:properties_of_uncross}~(4), in the numbered uncrossing $Y$, for each column in the aforementioned range, the larger number is even and the smaller one is odd, which implies that the rook in column $a_{\ell+1}$ has even index, and hence $a_{\ell+1} \in C(\rho)$ required.  

\textbf{Case (b):} Here, if $b_{\ell}$ is a column index, then $b_{\ell}$ is the leftmost column index containing a single rook $s'$ of $Z$. Also, $a_{\ell}$ is the largest row index containing a rook $t'$ of $Z$, occurring in column $q'$, say. The rook $t'$ is in row $a_{\ell}$ and this row contains only $t'$, so it must have odd index in $Y$. This together with the fact that since $b_{\ell}$ is the leftmost column index containing a single rook of $Z$ implies that each column from $q'$ to $b_{\ell}-1$ in the numbered uncrossing $Y$ contains two rooks of $Z$, with the higher indexed one being odd and the lower indexed one, even. Then the rook $s'$ in column $b_{\ell}$ has odd index in $Y$.  Then $a_{\ell +1}$ is the index of the next column after $b_{\ell}$ that contains exactly one rook of $Z$. Combining this with the fact that $s'$ has odd index in $Y$ implies that every column in the range $[b_{\ell}+1, a_{\ell+1}-1]$  --- if any --- in the numbered uncrossing $Y$ has exactly two rooks, the larger of which is even and the smaller of which is odd. (Here, again, we used Lemma~\ref{lem:properties_of_uncross}~(4).) This implies that the rook in column $a_{\ell+1}$ is even and hence $a_{\ell+1} \in C(\rho)$. This case is shown in Figure~\ref{fig:sort_closed_numbered} ($a_{\ell} = 5, b_{\ell}=8, a_{\ell+1} = 9$). 

\textbf{Induction step:} Now suppose $j>\ell+1$ and we have $a_{\ell+1} = t_{\ell+1}, \ldots , a_{j-1} = t_{j-1}$. Consider the column index $a_{j}$. Once again, if $a_{j}$ occurs twice in $I \cup J$, then $Z$ has no rook in column $a_{j}$, and hence $\rho$ no rook in column $a_{j}$ and we are done. Suppose $a_{j}$ occurs exactly once, then by Lemma~\ref{lem:properties_of_uncross} (2), $Z$ has exactly one rook in column $a_{j}$. Consider the set of columns of $Z$ with exactly one rook, and order them by index. Consider the two columns $a_{m}< b_{k}$ preceding $a_{j}$ in this order. By the induction hypothesis, $a_{m} \in C(\rho)$ and hence the rook in column $a_{m}$ has even index in $Y$. Columns $a_{m}$ and $b_{k}$ are separated by columns each of which contain two rooks, the higher index of which is odd, and the lower index of which is even, by Lemma~\ref{lem:properties_of_uncross}~(4). This implies that the rook in column $b_{k}$ has odd index in $Y$, and by a similar argument, it follows that the rook in column $a_{j}$ has even index and hence $a_{j} \in C(\rho)$, so $a_{j} = t_{j}$. The induction and hence the proof is then complete. 
\end{proof}

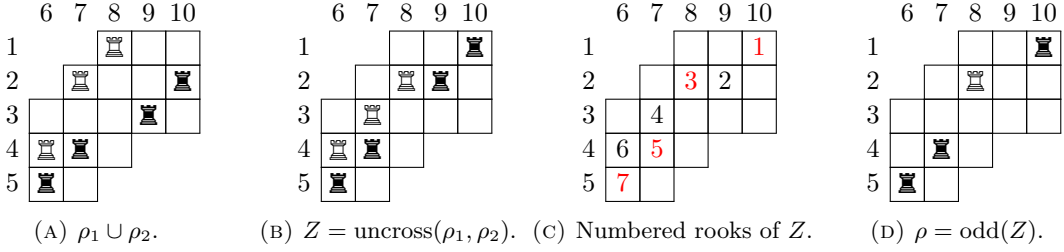
\begin{figure}[ht!]
    \centering
    \begin{subfigure}{0.25\textwidth}
\centering
   \begin{tikzpicture}[inner sep=0in,outer sep=0in]
\node (n) {\begin{varwidth}{6cm}{
\ytableausetup{boxsize=1.25em}
 \begin{ytableau} \none & \none[6] & \none[7] & \none[8] & \none[9] & \none[10]  \\ \none[1] & \none & \none & \rook  &  &  \\ \none[2] & \none & \rook &  &  & \scalebox{0.5}{\blackrook} \\ \none[3] &  &  &  & \scalebox{0.5}{\blackrook} &  \\ \none[4] & \rook  & \scalebox{0.5}{\blackrook} &  \\ \none[5] & \scalebox{0.5}{\blackrook} &  \\ \end{ytableau}}\end{varwidth}};
\end{tikzpicture}
\caption{$\rho_{1}\cup \rho_{2}$.}
    \label{fig:sort_closed_procedure1}
\end{subfigure}
\hspace{-0.35cm}
\begin{subfigure}{0.25\textwidth}
\centering
\begin{tikzpicture}[inner sep=0in,outer sep=0in]
\node (n) {\begin{varwidth}{6cm}{
\ytableausetup{boxsize=1.25em}
 \begin{ytableau} \none & \none[6] & \none[7] & \none[8] & \none[9] & \none[10]  \\ \none[1] & \none & \none &  &  & \scalebox{0.5}{\blackrook}  \\ \none[2] & \none &  & \rook & \scalebox{0.5}{\blackrook} &  \\ \none[3] &  & \rook &  &  &  \\ \none[4] & \rook & \scalebox{0.5}{\blackrook} &  \\ \none[5] & \scalebox{0.5}{\blackrook} &  \\ \end{ytableau}}\end{varwidth}};
\end{tikzpicture}
\caption{$Z = \mathrm{uncross}(\rho_{1}, \rho_{2})$.}
    \label{fig:sort_closed_procedure2}
\end{subfigure}
\hspace{-0.45cm}
\begin{subfigure}{0.25\textwidth}
\centering
\begin{tikzpicture}[inner sep=0in,outer sep=0in]
\node (n) {\begin{varwidth}{6cm}{
\ytableausetup{boxsize=1.25em}
 \begin{ytableau} \none & \none[6] & \none[7] & \none[8] & \none[9] & \none[10]  \\ \none[1] & \none & \none &  &  & \textcolor{red}{1}  \\ \none[2] & \none &  & \textcolor{red}{3}  & $2$ &  \\ \none[3] &  & $4$ &  &  &  \\ \none[4] & $6$ & \textcolor{red}{5}  &  \\ \none[5] & \textcolor{red}{7}  &  \\ \end{ytableau}}\end{varwidth}};
\end{tikzpicture}
\caption{Numbered rooks of $Z$.}
\label{fig:sort_closed_numbered}
\end{subfigure}
\hspace{-0.45cm}
\begin{subfigure}{0.25\textwidth}
\centering
\begin{tikzpicture}[inner sep=0in,outer sep=0in]
\node (n) {\begin{varwidth}{6cm}{
\ytableausetup{boxsize=1.25em}
 \begin{ytableau} \none & \none[6] & \none[7] & \none[8] & \none[9] & \none[10]  \\ \none[1] & \none & \none &  &  & \scalebox{0.5}{\blackrook}  \\ \none[2] & \none &  & \rook &  &  \\ \none[3] &  &  &  &  &  \\ \none[4] & & \scalebox{0.5}{\blackrook} &  \\ \none[5] & \scalebox{0.5}{\blackrook} &  \\ \end{ytableau}}\end{varwidth}};
\end{tikzpicture}
\caption{$\rho = \text{odd}(Z)$.}
    \label{fig:sort_closed_procedure3}
\end{subfigure}
    \caption{On skew shape $55532/21$: (a) a double non-nesting rook placement $I\cup J$ with $I = 124 9 10$ and $J = 23458$; (b) double rook placement $Z = \text{uncross}(I, J)$ with no strictly nested rooks; (c) numbered uncrossing $Y$ of $Z$ with odd indices in red; (d) non-nesting rook placement $\rho = \text{odd}(Z)$, which corresponds to $\text{sort}_{1}(I, J) = 12459$.}
    \label{fig:sort_closed_procedure}
\end{figure}

\subsection{Essential sets of a rook matroid}\label{subsection:essential sets}
We now consider a different family of objects that parametrizes positroids called essential sets. In \cite{Mohammadi2024EssentialSets}, Mohammadi and Zaffalon introduced \textit{ranked essential sets} of a positroid, in analogy with Fulton's essential sets of permutations, and initiated the systematic study of them; in particular a new axiomatization for positroids in terms of essential sets was derived together with an algorithm showing how to compute essential sets and their ranks for a given positroid.

The upshot of determining the ranked essential sets of a positroid is that the inequalities of the corresponding matroid polytope can be specified in terms of these objects. That is our aim with the next three results. 

Informally, essential sets are ``maximally dependent cyclic intervals,''  a term that will be made precise in the next proposition. By \cite{Mohammadi2024EssentialSets}, the following can be taken to be the definition of essential sets of a positroid. The cyclic interval $[i, j]$ is called an essential set and the collection of all pairs $(\rk([i, j]),  [i, j])$ is called the \emph{ranked essential family} of a positroid. 

\begin{proposition}\cite[Theorem 3.8]{Mohammadi2024EssentialSets}\label{prop:rank_essential_char}
    Let $M$ be a positroid of rank $k$ on $[n]$ and let $\mathcal{E}'$ be the set of pairs $(r, [i, j])$ such that: \begin{enumerate}
        \item If $i \neq j$, then $ r = \rk([i, j]) = \rk([i+1, j]) = \rk([i, j-1]) = \rk([i-1, j]) - 1 = \rk([i, j+1])-1$.
        \item If $i = j$, then $0 = r = \rk([i, i]) = \rk([i-1, i]) -1 = \rk([i, i+1]) - 1$.
    \end{enumerate}
Then the ranked essential family $\mathcal{E}$ of $M$ is given by $\mathcal{E} = \mathcal{E}' \cup \{k, [1, n]\}$.
\end{proposition}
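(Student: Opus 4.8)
The plan is to prove the two inclusions $\mathcal{E} \supseteq \mathcal{E}' \cup \{(k,[1,n])\}$ and $\mathcal{E} \subseteq \mathcal{E}' \cup \{(k,[1,n])\}$ separately, working throughout with the cyclic nullity function $d([i,j]) \coloneqq |[i,j]| - \rk([i,j])$. The two structural inputs are: (a) the unit-step property of the cyclic rank function, namely $\rk([i,j]) \leq \rk([i-1,j]) \leq \rk([i,j]) + 1$ and $\rk([i,j]) \leq \rk([i,j+1]) \leq \rk([i,j])+1$, together with the mirror statements for the deletion of an endpoint, all of which follow from submodularity of the matroid rank function; and (b) the fact, implicit in \cite{Mohammadi2024EssentialSets} and aligned with the polytopal discussion above, that a positroid is recovered from its cyclic rank data, so that the ranked essential family is exactly the minimal irredundant collection of cyclic rank conditions $\sum_{t\in[i,j]} x_t \leq \rk([i,j])$ cutting out the matroid polytope, together with the defining equality $\sum_t x_t = k$.

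The crux is a dictionary: conditions (1) and (2) say precisely that $[i,j]$ is a corner of a level set of $d$. Rewriting (1), the equalities $\rk([i,j]) = \rk([i+1,j])$ and $\rk([i,j]) = \rk([i,j-1])$ become $d([i,j]) = d([i+1,j]) + 1$ and $d([i,j]) = d([i,j-1]) + 1$, so that shrinking $[i,j]$ from either end strictly drops the nullity; while $\rk([i,j]) = \rk([i-1,j]) - 1$ and $\rk([i,j]) = \rk([i,j+1]) - 1$ become $d([i-1,j]) = d([i,j])$ and $d([i,j+1]) = d([i,j])$, so that enlarging $[i,j]$ from either end leaves the nullity unchanged. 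Thus $[i,j] \in \mathcal{E}'$ exactly when $[i,j]$ is a corner of $d$, the cyclic analogue of a southeast corner of Fulton's diagram.

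With this dictionary in hand, both inclusions follow from the standard redundancy argument. For $\mathcal{E} \subseteq \mathcal{E}' \cup \{(k,[1,n])\}$: if the left enlargement equality fails, i.e. $\rk([i-1,j]) = \rk([i,j])$, then $\sum_{t\in[i,j]} x_t \leq \sum_{t\in[i-1,j]} x_t \leq \rk([i-1,j]) = \rk([i,j])$ shows the $[i,j]$ inequality is implied by the $[i-1,j]$ inequality and $x_{i-1}\geq 0$; if the left shrinking equality fails, i.e. $\rk([i+1,j]) = \rk([i,j]) - 1$, then $\sum_{t\in[i,j]} x_t = x_i + \sum_{t\in[i+1,j]} x_t \leq 1 + \rk([i+1,j]) = \rk([i,j])$ shows it is implied by the $[i+1,j]$ inequality and $x_i \leq 1$. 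In either case $[i,j]$ is redundant, hence not essential, and symmetric arguments handle the right endpoint. The only datum surviving this reduction globally is the defining equality at $[1,n]$, whose rank is forced to be $k$ and which admits no proper enlargement, yielding the extra pair $(k,[1,n])$. For the reverse inclusion one checks, using the unit-step property, that at a genuine corner none of the four neighbouring conditions implies the condition at $[i,j]$ (equivalently, one exhibits a polytope point at which only that inequality is tight), so it is irredundant and therefore essential; the singleton case $i=j$ is verified directly, where $d([i,i]) = 1$ forces $i$ to be a loop and the equalities for $[i-1,i]$ and $[i,i+1]$ say exactly that the neighbours $i-1$ and $i+1$ are not loops.

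The main obstacle I anticipate is the cyclic wraparound. Unlike Fulton's linearly ordered grid, the cyclic intervals carry no absolute origin, so the nullity ``array'' is only defined up to the rotational symmetry; one must verify that the corner condition is genuinely intrinsic and, crucially, that the global interval $[1,n]$ --- bordering the seam and always of rank $k$ --- is correctly isolated as the single non-corner essential datum rather than being produced or destroyed by the redundancy reductions. Confirming that these reductions never cascade past $[1,n]$, and that no spurious corners appear at the seam, is where the careful bookkeeping lies.
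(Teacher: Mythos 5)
The paper does not prove this proposition: it is quoted verbatim from Mohammadi--Zaffalon (their Theorem~3.8) and is explicitly used as the \emph{working definition} of the ranked essential family in this paper (``the following can be taken to be the definition of essential sets of a positroid''). So there is no in-paper proof to compare against, and your attempt must be judged on its own terms against the source's actual definition.

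Your nullity dictionary is correct: conditions (1)--(2) do say exactly that shrinking $[i,j]$ at either end drops $d([i,j]) = |[i,j]| - \rk([i,j])$ while enlarging at either end preserves it, i.e.\ that $[i,j]$ is a ``corner'' of the cyclic nullity array. The genuine gap is in your structural input (b). You replace the actual definition of $\mathcal{E}$ --- the corners of the diagram of the bounded affine permutation, the cyclic analogue of Fulton's essential sets --- with ``the minimal irredundant collection of cyclic rank inequalities cutting out the matroid polytope.'' These are not the same family. Essential sets need not be connected (this is precisely why Definition~\ref{def:connected_essential_definition} exists), and a disconnected essential set $(r,[i,j])$ with $r = r_1 + \cdots + r_m + |[i,j]\setminus \bigcup_a [i_a,j_a]|$ has its inequality implied by those of the pieces together with $x_t \leq 1$; only the \emph{connected} essential sets give facets (\cite[Corollary 3.12]{Mohammadi2024EssentialSets}, as used before Proposition 3.16 here). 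Under your definition such sets would be excluded from $\mathcal{E}$, so your $\supseteq$ inclusion fails for them; and in the other direction, your step ``none of the four neighbouring conditions implies the condition at $[i,j]$, so it is irredundant and therefore essential'' is a non sequitur, since irredundancy must be tested against \emph{all} the inequalities (in particular against decompositions into disjoint subintervals), not just the four neighbours. A correct proof has to relate $\rk([i,j])$ to the bounded affine permutation $f$ (via $\rk([i,j]) = |[i,j]| - \#\{a \in [i,j] : f(a) \leq j\}$ or an equivalent formula) and identify conditions (1)--(2) with the combinatorial corners of the diagram of $f$; the polytopal route you propose proves a different (and strictly smaller) family.
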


Call an essential set \emph{non-degenerate} if it is not a singleton and \emph{proper} if it is not equal to $[1, n]$. The following definition identifies a subfamily of essential sets that is important in the context of being sufficient information from which the positroid can be recovered. 

\begin{definition}\cite[Definition 3.10]{Mohammadi2024EssentialSets}\label{def:connected_essential_definition}
    A ranked essential set $(r, [i, j])$ is \textit{connected} if there exists no pairwise disjoint sets $(r_{1}, [i_{1}, j_{1}]),$ $\ldots$, $(r_{m}, [i_{m}, j_{m}]) \in \mathcal{E}$ such that \[
    r = r_{1}+\ldots   r_{m}+ \left |[i, j]\setminus \bigcup_{a=1}^{m}[i_{a}, j_{a}] \right| \text{and $[i, j] \supseteq \bigcup_{a=1}^{m}[i_{a}, j_{a}]$}.
    \]
\end{definition}

We pause to clarify a distinction that went unremarked upon in~\cite{Mohammadi2024EssentialSets}. Namely, the above definition of a connected essential set coincides with the usual matroid theoretic definition of being connected, when the set under consideration is an essential set of a positroid. 

\begin{fact}\label{fact:connected_definitions_coincide}
    Let $(r, [i, j])$ be a ranked essential set of a positroid $M$. Then $(r, [i, j])$ is connected in the sense of Definition~\ref{def:connected_essential_definition} if and only if $[i, j]$ is connected as a subset of $M$, i.e., $M|_{[i, j]}$ has no separators. 
\end{fact}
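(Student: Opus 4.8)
The plan is to translate both notions into statements about the rank function of the restricted matroid $N := M|_{[i,j]}$ and then to use positroidality to pass between separators and families of essential subintervals. Recall that $N$ is disconnected precisely when $[i,j] = A \sqcup B$ for nonempty $A,B$ with $\rk(A)+\rk(B) = \rk([i,j])$, i.e.\ when $N$ has a nontrivial separator. On the other side, I would first record that the defining equality of Definition~\ref{def:connected_essential_definition} is really a rank-additive splitting condition: if $(r_1,[i_1,j_1]),\dots,(r_m,[i_m,j_m]) \in \mathcal{E}$ are pairwise disjoint subintervals of $[i,j]$ and $T = [i,j]\setminus\bigcup_a [i_a,j_a]$, then submodularity gives
\[
\rk([i,j]) \;\le\; \sum_a \rk([i_a,j_a]) + \rk(T) \;\le\; \sum_a r_a + |T|,
\]
so $r = \sum_a r_a + |T|$ holds if and only if every inequality is tight; that is, if and only if $T$ is independent (a set of coloops of $N$), the subintervals $[i_a,j_a]$ are mutually separating, and they exhibit $N$ as a direct sum $\bigoplus_a M|_{[i_a,j_a]} \oplus M|_T$. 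Since disjoint subintervals together with the singletons of $T$ cover $[i,j]$, this is exactly a rank-additive partition of $[i,j]$ into consecutive blocks, each an essential subinterval or a coloop singleton. Thus $(r,[i,j])$ fails to be connected in the sense of Definition~\ref{def:connected_essential_definition} iff $[i,j]$ admits such a partition into at least two blocks, and one direction of the Fact is then immediate: any partition into at least two mutually separating consecutive blocks exhibits a nontrivial separator (take a proper union of blocks), so $N$ is disconnected.

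The converse is the heart of the matter: from a mere separator I must produce a splitting by essential \emph{intervals}. The key structural input is that if $N = M|_{[i,j]}$ is disconnected then it has a nontrivial separator that is a \emph{linear} subinterval of $\{i,\dots,j\}$. To see this I would use that $N$ is again a positroid (positroids are closed under restriction to cyclic intervals, being closed under single-element end-deletion and cyclic rotation), and that the connected components of a positroid form a non-crossing partition of its cyclically ordered ground set~\cite{Oh2011Positroids, KnutsonLamSpeyer2013PositroidJuggling}. A non-crossing partition of the cyclically ordered set $[i,j]$ with at least two blocks always has a block that is a linear subinterval of $\{i,\dots,j\}$ --- e.g.\ an innermost block of the nesting order, which is a genuine arc and, not being all of $[i,j]$, can be taken not to wrap around the cut between $j$ and $i$. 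Such a block is a union of components, hence a separator, and it is a linear interval. Iterating this on each piece produces a rank-additive partition of $[i,j]$ into consecutive blocks, each a singleton or a connected interval restriction.

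It remains to verify that this partition has the required form. For a nonsingleton block $[c,d]$, connectedness of $M|_{[c,d]}$ forces $\rk([c,d]) = \rk([c+1,d]) = \rk([c,d-1])$ (neither endpoint is a coloop), while mutual separation of consecutive blocks gives $\rk([c-1,d]) = \rk([c,d])+1$ and $\rk([c,d+1]) = \rk([c,d])+1$ whenever the outer neighbors $c-1,d+1$ are non-loops; for the outermost blocks, abutting $i-1$ and $j+1$, these last conditions instead follow from the essentiality of $[i,j]$ via Proposition~\ref{prop:rank_essential_char}. Comparing with Proposition~\ref{prop:rank_essential_char} then shows $(\rk([c,d]),[c,d]) \in \mathcal{E}$. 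The remaining subtlety is loops: a loop cannot be placed in $T$ without violating $r = \sum_a r_a + |T|$, so each loop must be absorbed into an adjacent nonsingleton block, which an essential set tolerates at its boundary provided the new outer neighbor is a non-loop. After this bookkeeping every singleton of $T$ is a coloop, and the resulting family witnesses that $(r,[i,j])$ is not connected in the sense of Definition~\ref{def:connected_essential_definition}, completing the equivalence.

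I expect the main obstacle to be the structural lemma that a disconnected interval restriction of a positroid always admits an \emph{interval} separator; this is exactly the point at which positroidality (rather than matroid structure alone) is essential, and it is genuinely needed, since Definition~\ref{def:connected_essential_definition} only allows interval pieces while an arbitrary matroid could split solely along non-interval separators. The second, more technical, point requiring care is the handling of loops, which must be merged into adjacent essential blocks rather than left as singletons, so that the counting identity $r = \sum_a r_a + |T|$ is matched exactly.
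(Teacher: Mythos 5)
First, a point of comparison: the paper does not actually prove this Fact. It explicitly remarks that the ``if'' direction follows immediately from the two definitions, that the ``only if'' direction ``requires more work,'' and that the proof is omitted because only the ``if'' direction is used later. So your proposal is not being measured against an argument in the text, and I can only assess it on its own terms. Your treatment of the easy direction (a witnessing family forces every submodular inequality to be tight, hence rank-additivity and a nontrivial separator of $M|_{[i,j]}$) is exactly the observation the paper alludes to and is correct, modulo the standing convention that the witnessing family is not allowed to be $\{(r,[i,j])\}$ itself.

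For the hard direction, your key structural input --- that $M|_{[i,j]}$ is again a positroid and that the connected components of a positroid form a non-crossing partition of the cyclically ordered ground set --- is the right idea, and it is indeed where positroidality enters. But the step ``iterating this on each piece produces a rank-additive partition of $[i,j]$ into consecutive blocks, each a singleton or a connected interval restriction'' is false as stated: non-crossing components need not be intervals (take components $\{c,d\}$ and $\{c+1,\dots,d-1\}$), so the interval blocks you can actually extract are the convex hulls of components, and such a hull $[c,d]$ may be a \emph{disconnected} union of several components. Your verification that each nonsingleton block lies in $\mathcal{E}$ explicitly invokes ``connectedness of $M|_{[c,d]}$'' to obtain $\rk([c,d])=\rk([c+1,d])=\rk([c,d-1])$, so that step does not go through as written; it is repairable, since the endpoints of a maximal hull lie in a component of size at least two, which is all those two equalities require, but the repair must be made. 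Separately, the loop bookkeeping is asserted rather than carried out: a loop left in $T$ breaks $r=\sum_a r_a+|T|$, your proposed fix of absorbing it into an adjacent nonsingleton block is not always available (its neighbours may be singleton coloops or further loops), and the alternative of using Proposition~\ref{prop:rank_essential_char} to treat a singleton loop as its own rank-$0$ essential set only works when its neighbours are non-loops. Until these two points are settled, the hard direction is a plausible and essentially correctly aimed sketch rather than a complete proof.
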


Recall that if $M$ is a matroid on groundset $E$, then a \defin{separator} of $M$ is a subset $T$ of $E$ such that $\rk(M) = \rk(T) + \rk (E\setminus T)$. The ``if'' direction of Fact~\ref{fact:connected_definitions_coincide} follows immediately by the two  definitions of connectedness, whereas the ``only if'' direction requires more work. We omit the proof of Fact~\ref{fact:connected_definitions_coincide} since we will only be using its ``if'' direction and the characterization in Definition~\ref{def:connected_essential_definition}.

We first write down an explicit formula for the ranks of cyclic intervals of rook matroids in terms of extremal rook placements. As before, let $I_{a}$ be the $a^{\thsup}$ entry of the Grassmann necklace of $\rookMat_{\lambda/\mu}$ and let $\rho_{a}$ be the corresponding $a$-extremal non-nesting rook placement. Recall the definition of $\ell_{a, b}$ and $m_{a, b}$ from Definition~\ref{def:ell_ab_definition}. 

\begin{lemma}\label{lem:rank_cyclic_interval}
    Let $\lambda/\mu$ be a skew shape with $r$ rows and $c$ columns. \begin{enumerate}
        \item If $a$ is a column index and $b$ is a row index, then \[
      \rk([a, b]) = |I_{a} \cap [a, b]| = r+c-a+1+\ell_{a, b}. 
    \]
    \item If $a$ is a row index and $b$ is a column index, then \[
    \rk([a, b]) = |I_{a} \cap [a, b]| = t_{a}-a+1+m_{a, b},\]
where $t_{a}$ is the last row index containing a rook of $\rho_{a}$. 
\item If $a$ and $b$ are both column indices with $b<a$, then \[
\rk([a, b]) = c-a+b+1+y_{a, b},\]
where $y_{a, b}= |\{(i, j) \in \rho_{a}: j \in [b+1, a-1]\}|$. In words, $y_{a, b}$ equals the number of rooks of $\rho_{a}$ in the columns $[b+1, a-1]$. 
\end{enumerate}  

\begin{remark}
    Note that for row indices $a$, the last rook of $\rho_{a}$ will either occur in row $r$ or column $r+1$. In the latter case, $t_{a}$ might not equal $r$, and each row between $a+1$ and $r$ will consist only of a single box. This will imply $t_{1} = t_{2} = \cdots = t_{t_{1}}$. 
\end{remark}
\end{lemma}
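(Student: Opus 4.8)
The common middle equality $\rk([a,b]) = |I_a \cap [a,b]|$ in all three parts is not special to rook matroids: it is the standard description of the rank of a cyclic interval of a positroid in terms of its Grassmann necklace (see \cite{Oh2011Positroids, Postnikov2006TotalPositivity}). Since $\rookMat_{\lambda/\mu}$ is a positroid by \Cref{prop:rooks_sort_closed} and its Grassmann necklace entries $I_a$ are exactly the $a$-extremal placements by \Cref{prop:grassmann_of_rooks}, the plan is to simply invoke this fact. This reduces each part to a counting problem: determine $|I_a \cap [a,b]|$ explicitly from the shape of the extremal placement $\rho_a$.

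For the counting I would exploit that $I_a = R(\rho_a) \cup C(\rho_a)$, so $I_a$ meets the row indices $[1,r]$ in exactly the rows occupied by $\rho_a$ and meets the column indices $[r+1,r+c]$ in exactly the columns left unoccupied by $\rho_a$. Two structural features of \Cref{def:i_extremal_rook_placements} drive everything. First, when $a$ is a column index, the construction leaves every column in $[a, r+c]$ unoccupied, so all of $[a,r+c]$ lies in $C(\rho_a) \subseteq I_a$. Second, when $a$ is a row index, the construction processes rows in increasing order starting at $a$ and terminates at the first row admitting no legal placement; hence the occupied rows form a contiguous block $[a, t_a]$, with no row skipped, and $t_a$ is genuinely the largest occupied row.

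With these in hand the three cases are bookkeeping over the decomposition of the cyclic interval into its row part and column part. In part (1), $[a,b] = [a,r+c] \cup [1,b]$; the column block contributes all $r+c-a+1$ of its (unoccupied) columns and the row block contributes $|I_a \cap [1,b]| = \ell_{a,b}$. In part (2), $[a,b] = [a,r] \cup [r+1,b]$; since all rooks of $\rho_a$ sit in rows $\geq a$, the row block contributes the occupied rows in $[a,r]$, which by contiguity is the block $[a, t_a]$ of size $t_a - a + 1$, while the column block contributes $|I_a \cap [r+1,b]| = m_{a,b}$. In part (3), with $b < a$ both column indices, the interval wraps all the way around as $[a,r+c] \cup [1,r] \cup [r+1,b]$; the first block again contributes $r+c-a+1$, the middle block contributes every occupied row, i.e. the total number $k_a$ of rooks, and---using that every rook of $\rho_a$ lies in columns $[r+1,a-1]$, so that exactly $k_a - y_{a,b}$ of them lie in $[r+1,b]$---the last block contributes $(b-r)-(k_a - y_{a,b})$ unoccupied columns. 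Adding these, the rook count $k_a$ cancels and leaves $c - a + b + 1 + y_{a,b}$, as claimed.

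I expect the only real friction to be pinning down the two structural features, particularly the contiguity of the occupied rows used in part (2): one must read \Cref{def:i_extremal_rook_placements} as processing rows in increasing order and halting at the first row with no admissible cell, so that the occupied rows are consecutive rather than leaving a gap. The positroid rank formula is the conceptual engine but is off the shelf, and the part (3) computation is pleasantly clean precisely because the rook count $k_a$ never needs to be evaluated---it cancels in the sum.
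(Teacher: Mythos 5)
Your route to the first equality differs from the paper's: you invoke the general positroid fact $\rk([i,j]) = |I_i\cap[i,j]|$ for cyclic intervals, whereas the paper argues directly that $I_a\cap[a,b]$ is a maximum-size independent subset of $[a,b]$ using $a$-extremality, so its proof never needs the positroid property. Both are legitimate; your version buys brevity at the cost of importing \Cref{prop:rooks_sort_closed} and \Cref{prop:grassmann_of_rooks}, neither of which depends on this lemma, so there is no circularity. Your bookkeeping in parts (1) and (3) agrees with the paper's and is correct; in particular you are right that the rook count cancels in part (3), and the term $r+c-a+1$ in parts (1) and (3) only needs that all columns in $[a,r+c]$ are left unoccupied, which is immediate from \Cref{def:i_extremal_rook_placements}.

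The friction you flagged in part (2) is, however, a genuine gap, and your proposed fix does not close it. You read \Cref{def:i_extremal_rook_placements} as halting at the first row admitting no legal placement; but then the output need not be the Gale-minimal basis, contradicting \Cref{prop:grassmann_of_rooks}, on which your first step rests. Concretely, take $\lambda/\mu = 333/22$, with cells $(1,6)$, $(2,6)$, $(3,4)$, $(3,5)$, $(3,6)$ and $r=c=3$. The $\leq_1$-minimal basis is $I_1=\{1,3,4\}$, realized by rooks at $(1,6)$ and $(3,5)$: row $2$ must be skipped, since its only cell attacks the rook in row $1$, and the procedure must nevertheless continue to row $3$ to produce $I_1$. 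Here the occupied rows $\{1,3\}$ are not contiguous, $t_1=3$, and the claimed formula gives $t_1-1+1+m_{1,4}=4$, whereas $\rk([1,4])=|I_1\cap[1,4]|=3$ (indeed the whole matroid has rank $3$). So the identity $|I_a\cap[a,r]|=t_a-a+1$ requires either a proof that no row is ever skipped (false in general) or a restatement in terms of the number of rooks of $\rho_a$. To be fair, the paper's own proof of (2) (``identical argument'') elides exactly the same point, so you have correctly located the soft spot; but as written neither your argument nor the paper's establishes the closed form stated in part (2), while the equality $\rk([a,b])=|I_a\cap[a,b]|$ survives in all three parts.
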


\begin{proof}
For the first equality in (1) we note that $I = I_{a} \cap [a, b]$ is an independent set inside $[a, b]$, since $I_{a}$ is a basis of the rook matroid. Further, there is no independent set $I' \subset [a, b]$ of larger size. This is because $I \cap [1, b]$ are row indices of the rooks in $\rho_{a}$ and by definition of $a$-extremality of the rook placement, no further rooks can be added in the rows $1$ to $b$. For the second equality in (1),
this follows since $|I_{a} \cap [a, b]| = |I_{a}\cap [a, r+c]|+|I_{a}\cap [1, b]| = r+c-a+1 + \ell_{a, b}$. 

The equalities in (2) follow by an identical argument as the ones in (1). For $(3)$, consider the rook placement consisting only of rooks of $\rho_{a}$ between the columns $b+1$ and $a-1$. The corresponding set $I_{a} \cap [a, b]$ is an independent set contained in $[a, b]$; it is of maximum size because the addition of any other rook to the aforementioned rook placement would necessarily be added in a column index $j$ occurring before $b$ or after $a$, which is inadmissible. This independent set contains exactly $r+c-a+1+b-r$ unoccupied column indices and exactly $y_{a, b}$ occupied row indices, as required. 
\end{proof}

We will encounter two notions of connectedness in what follows and in preparation of that, we state two lemmas that have easy proofs. We omit the proof of the first, and include the proof of the second.

\begin{lemma}\label{lem:connected_matroid_connected_shape}
    The rook matroid $\rookMat_{\lambda/\mu}$ is connected if and only if $\lambda/\mu$ is a connected skew shape. 
\end{lemma}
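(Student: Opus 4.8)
The plan is to prove Lemma~\ref{lem:connected_matroid_connected_shape} by establishing the contrapositive in both directions, connecting the matroid-theoretic notion of connectedness (existence of a nontrivial separator) with the combinatorial notion of connectedness of the skew shape (connectedness of the bipartite graph $\Gamma_{\lambda/\mu}$). Recall that a matroid $M$ on groundset $E$ is connected precisely when it has no separator $T$ with $\emptyset \neq T \neq E$, equivalently when $E$ cannot be partitioned as $E = T \sqcup (E \setminus T)$ with $\rk(M) = \rk(T) + \rk(E \setminus T)$. Since $\rookMat_{\lambda/\mu}$ is a transversal matroid (indeed a positroid) arising from the bipartite graph $\Gamma_{\lambda/\mu}$, the natural strategy is to show that a disconnection of the skew shape corresponds exactly to a separation of the matroid, and vice versa.

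First I would handle the easier direction: if $\lambda/\mu$ is \emph{disconnected}, then $\rookMat_{\lambda/\mu}$ is disconnected. A disconnected skew shape decomposes into blocks that share neither a row nor a column; concretely, the groundset $[r+c]$ splits as $E_1 \sqcup E_2$ where $E_1$ collects the row and column indices of one block and $E_2$ those of the complementary blocks. Because no cell of one block lies in a row or column of another, every non-nesting rook placement is a disjoint union of independent placements on the separate blocks, and a rook in one block neither attacks nor nests with a rook in another. I would argue that $\rk([r+c]) = \rk(E_1) + \rk(E_2)$ by showing that the $i$-extremal placements respect the block decomposition, so $E_1$ is a separator and the matroid is disconnected. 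This direction is essentially bookkeeping using the geometry of skew shapes (Fact~\ref{fact:skew_shape_characterization}) together with the rook-placement description of bases.

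For the converse — if $\lambda/\mu$ is \emph{connected} then $\rookMat_{\lambda/\mu}$ is connected — I would use the standard characterization that a matroid is connected if and only if every pair of groundset elements lies in a common circuit, equivalently that there is no nontrivial separator. Here I would exploit the rook-placement interpretation together with the extremal placements and the rank formulas from Lemma~\ref{lem:rank_cyclic_interval}. The key point is that connectedness of $\Gamma_{\lambda/\mu}$ forces the rows and columns to be ``linked'' by cells in such a way that no proper nonempty subset $T$ of $[r+c]$ can satisfy $\rk(T) + \rk(E\setminus T) = \rk(E)$. I would suppose for contradiction that such a separator $T$ exists, translate the rank additivity into a statement that rooks cannot be shared or traded between the row/column indices of $T$ and those of its complement, and then derive that the corresponding cells of the board must decompose into non-interacting blocks, contradicting connectedness of the bipartite graph. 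The extremal rook placements give concrete witnesses: a path in $\Gamma_{\lambda/\mu}$ connecting elements on opposite sides of $T$ produces a basis-exchange (an alternating sequence of occupied/unoccupied rows and columns) that violates rank additivity.

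The main obstacle I expect is the converse direction, specifically the bookkeeping needed to turn a path in the bipartite graph $\Gamma_{\lambda/\mu}$ into an explicit basis exchange witnessing that two elements lie in a common circuit. One has to be careful that the non-nesting constraint, not just the non-attacking one, is compatible with the exchange — a rook that can move to connect two blocks under the transversal (non-attacking) structure must also be repositionable without creating a nesting, which requires invoking the skew-shape geometry. A cleaner route, which I would pursue if the direct circuit argument becomes unwieldy, is to observe that connected components of a transversal matroid presented by a connected bipartite graph correspond to connected components of that graph (a classical fact for transversal matroids), and simply verify that the transversal presentation of $\rookMat_{\lambda/\mu}$ has underlying graph $\Gamma_{\lambda/\mu}$. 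This reduces the whole lemma to the known behavior of transversal-matroid connectivity, leaving only the verification that the non-nesting condition does not alter which rows and columns are ``reachable'' from one another.
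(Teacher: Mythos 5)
The paper does not actually prove this lemma: it omits the proof and points to the lattice path matroid analogue (Bonin--de Mier, Theorem~3.6 of \cite{Bonin2003lattice}), asserting that the same idea works. So the comparison here is between your plan and that intended adaptation. Your easy direction (disconnected shape $\Rightarrow$ disconnected matroid) is sound: a disconnected skew shape splits the groundset into two nonempty parts sharing no rows or columns, rooks in different blocks can neither attack nor nest each other (the blocks are arranged NE to SW, whereas nesting requires a SE relation), so $\rookMat_{\lambda/\mu}$ is a direct sum and hence disconnected.

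The hard direction is where your proposal has a genuine gap, concentrated in the ``cleaner route'' you say you would fall back on. First, the classical fact you invoke is false as stated: a transversal matroid presented by a connected bipartite graph need not be connected. For instance, the presentation $A_{1}=\{1\}$, $A_{2}=\{1,2\}$ has a connected bipartite graph but presents the free matroid on $\{1,2\}$, a direct sum of two coloops. The correct statements in this vein require a \emph{maximal} presentation, and verifying maximality is itself nontrivial work. Second, even granting some corrected version, the presenting bipartite graph of $\rookMat_{\lambda/\mu}$ as a transversal matroid is \emph{not} $\Gamma_{\lambda/\mu}$: that graph is bipartite between the $r$ rows and the $c$ columns of the board, but those $r+c$ elements together form the groundset of the matroid, so the transversal presentation is by an entirely different set system on $[r+c]$ (roughly, one set per column, containing the admissible rows for that column together with the column's own label). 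So the reduction does not type-check. That leaves your direct argument, which is only a statement of intent: ``derive that the cells must decompose into non-interacting blocks'' and ``a path in $\Gamma_{\lambda/\mu}$ produces a basis exchange violating rank additivity'' is precisely the content one must supply, e.g.\ by exhibiting, for any two groundset elements of a connected shape, an explicit circuit containing both (built from a chain of overlapping rows and columns), or by showing any putative separator forces $\mu_{i}\ge\lambda_{i+1}$ for some $i$. Without that mechanism the converse direction is not established.
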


The idea behind the proof of the lemma above is essentially the same as in the case of lattice path matroids; see~\cite[Theorem 3.6]{Bonin2003lattice} for the details of that case.

\begin{lemma}\label{lem:intervals_inner_corners_connected}
    Let $(i, j)$ be an inner corner (resp. outer corner) of connected skew shape $\lambda/\mu$. Then $[j+1, i]$ (resp. $[i, j-1]$) is a connected subset of the rook matroid $\rookMat_{\lambda/\mu}$. 
\end{lemma}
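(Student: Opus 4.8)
The plan is to reduce both statements to \Cref{lem:connected_matroid_connected_shape} by exhibiting the restriction of $M \coloneqq \rookMat_{\lambda/\mu}$ to the relevant cyclic interval as (an isomorphic copy of) the rook matroid on a contiguous band of rows of $\lambda/\mu$, and then observing that such a band is again a connected skew shape. Throughout I use that restricting to a set $S$ is the deletion $M|_S = M \setminus ([r+c]\setminus S)$, and that, as recorded in the proof of \Cref{prop:skew_shape_necessary}, deleting a row index corresponds to forcing that row to be unoccupied.

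Consider first an inner corner $(i,j)$, with cyclic interval $S = [j+1,i] = \{j+1,\ldots,r+c\}\cup\{1,\ldots,i\}$, so that $[r+c]\setminus S$ consists of the bottom rows $\{i+1,\ldots,r\}$ together with the left columns $\{r+1,\ldots,j\}$. Deleting the bottom rows forces them to be unoccupied, and since empty bottom rows do not affect the non-nesting condition on the remaining rooks, this deletion yields the rook matroid on the board $B_1$ formed by rows $1,\ldots,i$ of $\lambda/\mu$. The inner-corner conditions $(i,j)\notin\lambda/\mu$ and $(i,j+1)\in\lambda/\mu$ say that row $i$ begins in column $j+1$, i.e. $\mu_i = j-r$; as $\mu$ is weakly decreasing, every row $k\le i$ begins at a column of index $\ge j+1$, so the columns $r+1,\ldots,j$ contain no cell of $B_1$. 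Such empty columns are coloops of the rook matroid, and deleting a coloop simply removes it. Hence $M|_S$ is the rook matroid on the board $B_2$ obtained from $B_1$ by discarding the empty columns $r+1,\ldots,j$, namely the skew shape consisting of rows $1,\ldots,i$ of $\lambda/\mu$.

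It then remains to check that $B_2$ is a connected skew shape. Since $\lambda/\mu$ is connected, every pair of consecutive rows must overlap in at least one column, for otherwise the shape would split between them; this applies in particular to rows $1,\ldots,i$, whose column supports therefore union to a single interval of columns with no gaps, so $B_2$ is a reduced, connected skew shape and \Cref{lem:connected_matroid_connected_shape} applies. The outer-corner case is entirely symmetric: for an outer corner $(i,j)$ the interval $[i,j-1]$ has complement consisting of the top rows $\{1,\ldots,i-1\}$ and the right columns $\{j,\ldots,r+c\}$; deleting the top rows leaves the band of rows $i,\ldots,r$, the conditions $(i,j)\notin\lambda/\mu$ and $(i,j-1)\in\lambda/\mu$ force $\lambda_i = j-1-r$ so that columns $j,\ldots,r+c$ are empty (hence coloops) in this band, and deleting them leaves the rook matroid on rows $i,\ldots,r$, which is connected by the same argument. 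The main point to get right --- and the only place the hypotheses are genuinely used --- is the identification of $M|_S$ with a rook matroid on a band of rows: the corner condition pins down exactly where row $i$ starts (resp. ends), which determines precisely which extreme columns are empty, while connectedness of $\lambda/\mu$ guarantees both that no further empty columns survive inside $S$ (so there are no stray coloops) and that the resulting band of rows is itself connected.
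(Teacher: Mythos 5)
Your proposal is correct and follows essentially the same route as the paper: restrict to the band of rows $1,\ldots,i$ (resp.\ $i,\ldots,r$) by deleting the complementary row indices, use the corner condition to see that the columns $r+1,\ldots,j$ (resp.\ $j,\ldots,r+c$) are empty and hence coloops, remove them to identify $M|_S$ with the rook matroid on a sub-skew-shape, and conclude by Lemma~\ref{lem:connected_matroid_connected_shape}. The only cosmetic difference is that the paper invokes a deletion lemma from the earlier rook-matroid paper where you argue the row-deletion and coloop facts directly, and your consecutive-rows-overlap justification of the band's connectedness is slightly more explicit than the paper's.
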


\begin{proof}
    We only do the inner corner case since the outer corner case follows by an analogous argument. Let $\lambda/\mu$ be a skew shape with $r$ rows and $c$ columns. Let $M = \rookMat_{\lambda/\mu}$. Then $M|_{[j+1, i]} = M\setminus ([i+1, r]\cup [r+1, j])$. Now, since $[i+1, r]$ is a set of rows of $\lambda/\mu$, by \cite[Lemma 3.7]{alexandersson2024rooks}, $M_{1} = M\setminus [i+1, r]$ is again a rook matroid on the skew shape consisting of $\alpha/\beta$ with empty columns $r+1, \ldots ,j$ appended in front, where \[
    \alpha/\beta = \{(k, \ell) \in \lambda/\mu: 1 \leq k \leq i, \,\, j+1 \leq \ell \leq r+c\}.
    \]
   This is because since $(i, j)$ is an inner corner, the columns $r+1$ to $j$ share no cells with $\alpha/\beta$. In particular, $[r+1, j]$ is a set of coloops of $M_{1}$ which implies that $M|_{[j+1, i]} = M_{1} \setminus [r+1, j] = M_{1}/[r+1, j] = \rookMat_{\alpha/\beta}$, where the last equality follows from~\cite[Lemma 3.7]{alexandersson2024rooks} again. Since $\alpha/\beta$ is the restriction of a connected skew shape $\lambda/\mu$ to a cyclic interval of indices, $\alpha/\beta $ is also a connected skew shape. Hence, by Lemma~\ref{lem:connected_matroid_connected_shape}, $M|_{[j+1, i]}$ is a connected matroid, which completes the proof. 
\end{proof}

The following lemma locates the endpoints of essential sets of a rook matroid in terms of corners of the underlying skew shape. 

\begin{lemma}\label{lem:connected_sets_endpoints_rook}
    Let $[a, b]$ be a non-degenerate proper essential set of a rook matroid $\rookMat_{\lambda/\mu}$. \begin{enumerate}
        \item If $a$ is a column index and $b$ is a row index, then there is an inner corner of $\lambda/\mu$ at $(b, a-1)$.
    
        \item If $a$ is a row index and $b$ is a column index, then there is an outer corner of $\lambda/\mu$ at $(a, b+1)$.

        \item If $a$ and $b$ are both column indices, then $\lambda/\mu$ has an inner corner in column $a-1$ and an outer corner in column $b+1$. 
    \end{enumerate} 
\end{lemma}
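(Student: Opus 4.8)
The plan is to characterize essential sets via the rank conditions of Proposition~\ref{prop:rank_essential_char} and translate each rank equality into a statement about $i$-extremal rook placements using the explicit formulas of Lemma~\ref{lem:rank_cyclic_interval}. For an essential set $[a,b]$, the defining property is that $\rk([a,b])$ equals $\rk([a+1,b])$ and $\rk([a,b-1])$, but drops by one when either endpoint is extended outward to $\rk([a-1,b])$ or $\rk([a,b+1])$. The strategy is to interpret these four equalities geometrically: shrinking the interval from the left at a column index should not change the rank (so no rook was ``forced'' at the left boundary), while extending it should, and dually for the right endpoint. Each case of the lemma corresponds to a different combination of row/column indices for $a$ and $b$, so I would handle the three cases separately, matching them to the three parts of Lemma~\ref{lem:rank_cyclic_interval}.

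For case (1), where $a$ is a column index and $b$ is a row index, I would use the formula $\rk([a,b]) = r+c-a+1+\ell_{a,b}$. First I would show, via Corollary~\ref{cor:ell_ab_characterization}, that the rank condition $\rk([a,b]) = \rk([a-1,b])-1$ forces $\ell_{a,b}$ to behave so that $(b,a-1) \in \lambda/\mu$, i.e. the cell just to the northwest-boundary is present; then the condition $\rk([a,b]) = \rk([a,b-1])$ (equivalently $\ell_{a,b} = \ell_{a,b-1}$, so no rook of $\rho_a$ sits in row $b$) combined with $\rk([a+1,b]) = \rk([a,b])$ should pin down that $(b+1,a-1)$ and $(b,a)$ are absent while $(b,a-1)$ is present, which is exactly the inner corner condition at $(b,a-1)$. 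The translation between ``$\ell_{a,b}$ stays constant'' and ``a specific boundary cell is present or absent'' is where the extremal-placement definition does the real work, and I would lean on the remark following Definition~\ref{def:i_extremal_rook_placements} to control edge behavior when the last rook lands in column $r+1$.

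Case (2) is dual: with $a$ a row index and $b$ a column index, I would use $\rk([a,b]) = t_a - a + 1 + m_{a,b}$ and run the symmetric argument, reading the rank-drop conditions on the $a$-side and $b$-side as the presence/absence pattern defining an outer corner at $(a,b+1)$. Case (3), with both $a$ and $b$ column indices, uses part~(3) of Lemma~\ref{lem:rank_cyclic_interval} and is essentially a combination: I would argue that the rank behavior at the $a$-endpoint reproduces the inner-corner analysis in column $a-1$ while the behavior at the $b$-endpoint reproduces the outer-corner analysis in column $b+1$, using that the interval wraps cyclically and that the intermediate columns contribute the fixed term $c-a+b+1$ plus the rook count $y_{a,b}$.

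The main obstacle will be verifying the directional rank-change conditions cleanly, since they require me to argue not just that a particular cell is present or absent but that a genuine \emph{corner} forms -- meaning two cells present and the corner cell absent in the correct configuration. The subtlety is that a single rank equality only controls one boundary cell at a time, so I will need to combine the two ``stays equal'' conditions with the two ``drops by one'' conditions and check they are jointly consistent only when a corner is present; I expect the edge cases (where the extremal placement terminates in the first column or last row, per the remark) to require the most careful bookkeeping. I would also want to confirm throughout that non-degeneracy ($a \neq b$) and properness ($[a,b] \neq [1,n]$) are used exactly where needed to rule out the degenerate singleton and full-ground-set cases from Proposition~\ref{prop:rank_essential_char}.
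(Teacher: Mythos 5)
Your high-level strategy matches the paper's: rewrite the essential-set rank equalities via Lemma~\ref{lem:rank_cyclic_interval} as conditions on the quantities $\ell_{a,b}$, $m_{a,b}$, $y_{a,b}$ and read off the corner from the extremal placements. But in case (1) you have the target configuration inverted. An inner corner at $(b,a-1)$ means $(b,a-1)\notin\lambda/\mu$ while $(b+1,a-1)$ and $(b,a)$ \emph{are} cells of $\lambda/\mu$; you assert the opposite (that $(b,a-1)$ is present and $(b+1,a-1)$, $(b,a)$ are absent). Relatedly, Corollary~\ref{cor:ell_ab_characterization} only gives the implication $\ell_{a,b}=0\implies(b,a-1)\notin\lambda/\mu$, so it cannot be used to conclude membership of $(b,a-1)$ as you propose; in the paper it is used precisely to establish \emph{non}-membership.

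The second, more substantial gap is that you treat ``$\ell_{a,b}$ behaves so that $\ldots$'' as if it falls out of the rank equalities alone. Those equalities only give $\ell_{a,b}=\ell_{a-1,b}=\ell_{a,b-1}=\ell_{a+1,b}-1=\ell_{a,b+1}-1$; the crux of the paper's proof of (1) is the additional claim $\ell_{a,b}=0$, which requires a genuine contradiction argument: assuming $\ell_{a,b}\geq 1$, one uses $\ell_{a,b}=\ell_{a,b-1}$ to see that $\rho_a$ has no rook in row $b$, locates the last rooks of $\rho_a$ and $\rho_{a-1}$ using extremality, and contradicts Fact~\ref{fact:skew_shape_characterization}. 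Only once $\ell_{a,b}=0$ is established do $\ell_{a+1,b}=\ell_{a,b+1}=1$ deliver the two neighboring cells. A similar extremal-placement contradiction (comparing the first rooks of $\rho_{a-1}$, $\rho_a$, $\rho_{a+1}$) is needed for the inner corner in case (3); that case is not a formal combination of (1) and (2), since the relevant statistic $y_{a,b}$ counts rooks in a column range rather than a prefix of rows. Without these arguments the proof does not close.
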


\begin{proof}
For (1), since $[a, b]$ is a non-degenerate essential set, the following rank equations hold: \begin{equation}\label{ell_ab_equations}
\rk([a, b]) = \rk([a+1, b]) = \rk([a, b-1]) = \rk([a-1, b]) - 1 = \rk([a, b+1])-1.
\end{equation}
From Lemma~\ref{lem:rank_cyclic_interval}~(1), the above equations can be rewritten as \[
\ell_{a, b} = \ell_{a-1, b} =  \ell_{a, b-1} = \ell_{a+1, b} -1 = \ell_{a, b+1} -1.  
\]

To show that $(b, a-1)$ is an inner corner of $\lambda/\mu$, we need to show that (i) $(b, a-1) \notin \lambda/\mu$, while (ii) $(b, a), (b+1, a-1) \in \lambda/\mu$. For (i), by Corollary~\ref{cor:ell_ab_characterization}, it is enough to show that $\ell_{a, b} = 0$. Suppose, to the contrary, that  $\ell_{a, b} \geq 1$. Since $\ell_{a, b} = \ell_{a, b-1}$, $|I_{a} \cap [1, b]| = |I_{a} \cap [1, b-1]|$. This implies, in particular, that $\rho_{a}$ can have no rooks in row $b$. Let $s = (m, k)$ be the last rook of $\rho_{a}$ in the interval $[1, b]$. \textcolor{black}{Then the first cell of row $b$ occurs at $(b, k)$: if it were to the left of column $k$, then $\rho_{a}$ would be able to accommodate a rook in row $b$, which would be a contradiction. For the same reason, $\ell_{a, b} = \ell_{a-1, b}$ implies that all the rooks of $\rho_{a-1}$ also occur in $[1, b-1]$. See~Figure~\ref{fig:inner_outer_corner_essential_set1} for an illustration of this idea.} By $(a-1)$-extremality, the last rook of $\rho_{a-1}$ occurs in $(m, k-1)$. Since the first rook of $\rho_{a-1}$ occurs in column $a-2$, the column index of the last rook of $\rho_{a-1}$ occurs strictly to the left of $a-1$. In other words, $k-1 < a-1$ and $m<b$, since all the rooks of $\rho_{a}$ occur in $[1, b-1]$. Then, $(m, k-1), (b, a-1) \in \lambda/\mu$ with $k-1 < a-1$ and $m<b$, which implies by Fact~\ref{fact:skew_shape_characterization}, that $(b, k-1) \in \lambda/\mu$, which contradicts the fact that the first cell in row $b$ occurs at $(b, k)$.

Thus $\ell_{a, b} = 0$ which, by Corollary~\ref{cor:ell_ab_characterization}, implies that $(b, a-1) \notin \lambda/\mu$. This proves (i). Since $\ell_{a, b} = 0$, from the equations above, it follows that $\ell_{a+1, b} = \ell_{a, b+1} = 1$. The fact that $\ell_{a, b+1} = 1$ and $\ell_{a, b} = 0$ implies that the first rook of $\rho_{a}$ occurs at $(b+1, a-1)$, and hence this cell lies in $\lambda/\mu$. Since $\ell_{a+1, b} =1$, $\rho_{a+1}$ has a rook at $(i, a)$ for some $i \in [1, b]$. Row $b$ must have some cell $(b, j)$ of $\lambda/\mu$ South-East of $(i, a)$; otherwise $\lambda/\mu$ contains an empty row at the index $b$ which contradicts the skew shape assumption. Then, applying Fact~\ref{fact:skew_shape_characterization} to $(i, a)$ and $(b, j)$, it follows that $(b, a)$ lies in $\lambda/\mu$, which proves (ii) in the above paragraph, and hence point (1) in the lemma.

For (2), we use a similar argument as in (1), this time using Lemma~\ref{lem:rank_cyclic_interval}~(2) to rewrite the rank equations of an essential set to get conditions on $m_{a, b}$ analogous to (\ref{ell_ab_equations}). Here $m_{a, b}$ can be non-zero. 

For (3), using Lemma~\ref{lem:rank_cyclic_interval}~(3), the rank equations can be rewritten as \[
y_{a, b} = y_{a-1, b} = y_{a, b+1} = y_{a+1, b} - 1 =  y_{a, b-1} -1. 
\]
Observe that by $a$-extremality of $\rho_{a}$, if $\rho_{a-1}$ has a rook in row $s$, say, then $\rho_{a}$ also has a rook in row $s$ that occurs weakly to the right of $\rho_{a-1}$. This --- together with the fact that $y_{a, b} =  y_{a-1, b}$ --- implies that $\rho_{a-1}$ and $\rho_{a}$ have their first rooks in the same row. 

Suppose, to the contrary, that $\lambda/\mu$ has no inner corner in column $a-1$. Then $\rho_{a}$ and $\rho_{a+1}$ have their first rooks in the same row of $\lambda/\mu$. Thus, by the last sentence of the previous paragraph, all three $\rho_{i}$ for $i \in \{a-1, a, a+1\}$ have their first rooks in the same row. Now, in the interval of columns $[b+1, a]$, $\rho_{a+1}$ has one rook more than $\rho_{a}$ and $\rho_{a-1}$. In this range of columns, let the last rooks of $\rho_{a-1}$ and $\rho_{a+1}$ respectively occur at $(i, b+1)$ and $(j, k)$ for some row indices $i<j$ and a column index $k \in [b+1, a]$. \textcolor{black}{If $k = b+1$, by $a$-extremality,  $\rho_{a}$ will then have a rook in $(j, k)$. This contradicts the fact that $y_{a, b} = y_{a-1, b}$. See~Figure~\ref{fig:inner_outer_corner_essential_set2} for an illustration of this idea.} Thus, $k>b+1$, which by Fact~\ref{fact:skew_shape_characterization} applied to $(i, b+1)$ and $(j, k)$, then implies the cell $(j, b+1) \in \lambda/\mu$, which means $\rho_{a}$ must have a rook on that cell, again contradicting the fact that $y_{a, b} = y_{a-1, b}$. Thus, $\lambda/\mu$ has an inner corner in column~$a-1$. 

Since $y_{a, b} =  y_{a, b+1}$, $\rho_{a}$ can have no rook in column $b+1$. Since $y_{a, b} = y_{a, b-1}-1$, $\rho_{a}$ must have a rook in column $b$, \textcolor{black}{which by $a$-extremality will occur in the last cell of the corresponding row.} These facts taken together imply that there is an outer corner in column $b+1$, finishing the proof of (3). 
\end{proof}

\begin{figure}[ht!]
    \centering
    \begin{subfigure}{0.35\textwidth}
\centering
   \begin{tikzpicture}[inner sep=0in,outer sep=0in]
\node (n) {\begin{varwidth}{6cm}{
\ytableausetup{boxsize=1.25em}
 \begin{ytableau} \none & \none[\scriptstyle{k-1}] & \none[\scriptstyle{k}] & \none[\scriptstyle{a-2}] & \none[\, \, \, \scriptstyle{a-1}] & \none[] & \none[] & \none[]  \\ \none[] & \none &  & \scalebox{0.5}{\blackrook}  & \rook & \\ \none[] & \none & \scalebox{0.5}{\blackrook}  & \rook &  &   \\ \none[\scriptstyle{m}] & \scalebox{0.5}{\blackrook}  & \rook & &  \\ \none[\scriptstyle{b}] & &  \\ \end{ytableau}}\end{varwidth}};
\end{tikzpicture}
\caption{$\rho_{a-1}$ in black, $\rho+{a}$ in white}
    \label{fig:inner_outer_corner_essential_set1}
\end{subfigure}
\hspace{1.5cm}
\begin{subfigure}{0.35\textwidth}
\centering
\begin{tikzpicture}[inner sep=0in,outer sep=0in]
\node (n) {\begin{varwidth}{6cm}{
\ytableausetup{boxsize=1.25em}
 \begin{ytableau} \none & \none[] & \none[] & \none[\scriptstyle{b+1}] & \none[\scriptstyle{k}] & \none[\scriptstyle{a-2}] & \none[\, \, \, \scriptstyle{a-1}] & \none[\scriptstyle{a}]  \\ \none[] & \none &  &  &  & \textcolor{red}{\rook} & \scalebox{0.5}{\blackrook} & \rook \\ \none[] & \none &  &  & \textcolor{red}{\rook} & \scalebox{0.5}{\blackrook} & \rook  \\ \none[i] &  &  & \textcolor{red}{\rook} & \scalebox{0.5}{\blackrook} & \rook \\ \none[j] &  & &  & \rook \\ \end{ytableau}
 }\end{varwidth}};
\end{tikzpicture}
\caption{$\rho_{a-1}, \rho_{a}, \rho_{a+1}$ in red, black, white respectively.}
    \label{fig:inner_outer_corner_essential_set2}
\end{subfigure}
    \caption{In the proof of Lemma~\ref{lem:connected_sets_endpoints_rook}: (A) In point (1), $\ell_{a, b} = \ell_{a-1, b}$ leads to a contradiction with there being no rooks of $\rho_{a}$ or $\rho_{a-1}$ in row $b$. (B) In point (3), $y_{a,b} = y_{a-1, b} =y_{a+1, b}-1$ leads to a contradiction to the fact that $\rho_{a}$ (in black) has no rook in column $b+1$.}
    \label{fig:inner_outer_corner_essential_set}
\end{figure}
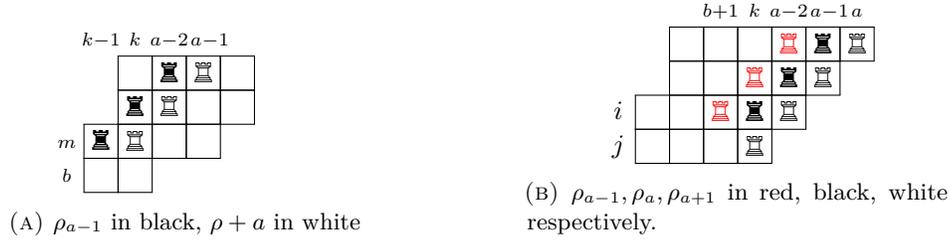

The following proposition allows us to compute the ranked essential sets of a rook matroid by simply looking at the inner (resp. outer) corners of the skew shape and reading off the number of columns to the right (resp. left). We restrict to the case of connected skew shapes for ease of presentation. 

\begin{proposition}\label{prop:ess_sets_rooks}
    Let $\lambda / \mu$ be a connected skew shape on $r$ rows and $c$ columns and $M = \rookMat_{\lambda / \mu}$ be the corresponding rook matroid on $\lambda / \mu$. Each pair in the family of connected ranked essential sets of the matroid $M$ is of the form: \begin{enumerate}
        \item $(r+c-j, [j+1, i])$ for every inner corner $(i, j)$ of $\lambda / \mu$, and 
        \item $(l-1-r, [k, l-1])$ for every outer corner $(k,l)$ of $\lambda / \mu$.
    \end{enumerate}
\end{proposition}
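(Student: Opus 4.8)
The plan is to prove that the corners of $\lambda/\mu$ are in bijection with the proper non-degenerate connected ranked essential sets of $M$, establishing both inclusions separately. (The improper set $[1,r+c]$ is always a connected essential set, and singletons cannot occur since a connected skew shape with a cell in every row and column has no loops; these standard exceptions are set aside, so ``connected ranked essential set'' below means the proper non-degenerate ones.) Throughout I would move between rank conditions on cyclic intervals and the geometry of the extremal placements $\rho_a$ via \cref{lem:rank_cyclic_interval}.

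For the inclusion \emph{corner $\Rightarrow$ connected essential set}, take an inner corner $(i,j)$ and put $a=j+1$, $b=i$, so $a$ is a column index and $b$ a row index. First I would show $\ell_{a,b}=0$: since $\mu$ is a partition the left edge of $\lambda/\mu$ moves weakly rightward as the row index decreases, and the leftmost cell of row $i$ lies in column $j+1$ (because $(i,j)\notin\lambda/\mu$ while $(i,j+1)\in\lambda/\mu$); hence no row of $[1,i]$ meets a column $\le j$, and the $(j+1)$-extremal placement, which uses only columns $\le j$, places no rook in rows $1,\dots,i$. By \cref{lem:rank_cyclic_interval}(1) this gives $\rk([j+1,i])=r+c-j$. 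I would then verify the five rank equations of \cref{prop:rank_essential_char}(1) by computing the neighbouring values $\ell_{a,b-1}=0$ (monotonicity), $\ell_{a-1,b}=0$ (the same argument with columns $\le j-1$), $\ell_{a,b+1}=1$ (row $i+1$ meets column $j$ since $(i+1,j)\in\lambda/\mu$, and it is the first occupied row), and $\ell_{a+1,b}=1$ (column $j+1$ becomes available, but the only cells of rows $[1,i]$ in columns $\le j+1$ sit in column $j+1$, so exactly one rook can be placed there). Connectedness is exactly \cref{lem:intervals_inner_corners_connected}. The outer corner case is entirely analogous, replacing $\ell_{a,b}$ by $m_{a,b}$ and invoking \cref{lem:rank_cyclic_interval}(2).

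For the reverse inclusion, let $[a,b]$ be a proper non-degenerate connected essential set. By \cref{lem:connected_sets_endpoints_rook} it falls into one of three cases. In cases (1) and (2) the lemma already produces an inner corner at $(b,a-1)$, respectively an outer corner at $(a,b+1)$, and the rank read off from \cref{lem:rank_cyclic_interval} equals $r+c-(a-1)$, respectively $(b+1)-1-r$, matching the claimed values. The real work is to rule out case (3), where $a,b$ are both column indices and $[a,b]$ contains every row: here \cref{lem:connected_sets_endpoints_rook}(3) supplies an inner corner $(i,a-1)$ and an outer corner $(k,b+1)$, and I would show $i<k$ and that $[a,b]$ splits as $[a,i]\sqcup[i+1,k-1]\sqcup[k,b]$, where the flanking pieces are the connected corner-intervals built in the previous paragraph (hence members of $\mathcal{E}$) and $[i+1,k-1]$ is a set of coloops of $M|_{[a,b]}$. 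The crux is the rook-counting identity $y_{a,b}=k-i-1$, which yields $\rk([a,b])=\rk([a,i])+\rk([k,b])+(k-i-1)$; by \cref{def:connected_essential_definition} together with \cref{fact:connected_definitions_coincide} this exhibits a decomposition, so $[a,b]$ is disconnected and case (3) never contributes.

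The main obstacles are the two computations just flagged. In the forward direction the delicate equation is $\ell_{a+1,b}=1$: one must argue that opening up column $j+1$ admits exactly one new rook in rows $[1,i]$, which rests on the skew-shape characterisation \cref{fact:skew_shape_characterization} pinning down where the extremal placement can go. In the converse the harder point is case (3): establishing $i<k$ and the identity $y_{a,b}=k-i-1$ requires tracking how the column threshold of $\rho_a$ aligns with the outer-corner row $k$, once again through \cref{fact:skew_shape_characterization}.
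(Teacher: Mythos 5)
Your proposal is correct and follows essentially the same route as the paper: verify the five rank conditions of \cref{prop:rank_essential_char} at each corner and invoke \cref{lem:intervals_inner_corners_connected} for connectedness, then use \cref{lem:connected_sets_endpoints_rook} for the converse and kill the column--column case by exhibiting the decomposition $\rk([a,b])=\rk([a,i])+\rk([k,b])+(k-i-1)$ via the identity $y_{a,b}=k-i-1$, exactly as in the paper. The only (cosmetic) difference is that you check the rank equations in the forward direction by computing the $\ell_{a,b}$-values through \cref{lem:rank_cyclic_interval}, whereas the paper exhibits maximal independent sets of column indices directly.
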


\begin{proof}
    We first show that every rank-interval pair of the above form satisfies the rank conditions of Proposition~\ref{prop:rank_essential_char}. We only do the verification for inner corners since the outer corner case follows analogously (using the second statement instead of the first in each of Lemma~\ref{lem:rank_cyclic_interval} and Lemma~\ref{lem:connected_sets_endpoints_rook}).

   Let $(i, j)$ be an inner corner of $\lambda/\mu$. We need to show that the following equations hold: \begin{align}
        \rk([j+1, i]) &= r+c-j \label{eq:rank1_1}\\ \rk([j+2, i]) &= r+c-j  \label{eq:rank1_2}\\
        \rk([j+1, i-1]) &= r+c-j  \label{eq:rank1_3}\\
        \rk([j, i]) &= r+c-j+1 \label{eq:rank1_4}\\
        \rk([j+1, i+1]) &= r+c-j+1
        \label{eq:rank1_5}
    \end{align} 

    Consider the cyclic interval $[j+1, i]$. This interval has rank $r+c-j$: it contains $[j+1, r+c]$, which being a selection of columns, is an independent set of size $r+c-j$. Further, if $A \subseteq [j+1, i]$ is an independent set of size $r+c-j+1$, by the augmentation axiom of independent sets, there exists some $a \in A$ such that $I = [j+1, r+c]\cup\{a\}$ is also independent. Since $I$ uses all the columns to the right of $j$, $a$ must necessarily be a row index (less than $i$, since $A \subseteq [j+1, i]$). But then the rook in row $a$ must correspond to some column $b \in [j+1, r+c]$, implying that $a, b$ both lie in $I$, contradicting the independence of $I$. Thus $[j+1, i]$ has rank $r+c-j.$ Thus equation~(\ref{eq:rank1_1}) holds.

    Since $(i, j)$ is an inner corner, $(i, j+1) \in \lambda / \mu$. In particular $[j+2, r+c] \cup \{i\}$ is an independent set of size $r+c-j$ inside $[j+2, i]$; this proves equation~(\ref{eq:rank1_2}). Equation~(\ref{eq:rank1_3}) holds since $[j+1, i-1]$ contains the independent set $[j+1, r+c]$. 
    
    Consider the cyclic interval $[j, i]$. This has rank $c-j+1$ for the following reason. The inequality $\rk([j, i]) \leq \rk([j+1, i]) +1$ gives the upper bound and  since $[j, i]$ contains the independent set $\{j, j+1, \ldots , r+c\}$ of size $r+c-j+1$, we also get the same lower bound. Note that the latter set is independent since every subset of columns is independent in a rook matroid. This proves equation~(\ref{eq:rank1_4}). 

    Consider the cyclic interval $[j+1, i+1]$. This also has rank $r+c-j+1$ since it contains the independent set $[j+1, r+c] \cup \{i+1\}$. Note that this set corresponds to the rook placement with a single rook in $(i+1, j)$, which can be made since $(i+1, j) \in \lambda / \mu$, which holds since $(i, j)$ is an inner corner. Thus equation~(\ref{eq:rank1_5}) holds.

    Finally each of the essential sets coming from the inner and outer corners of the skew shape are connected. Indeed, by (the ``if'' direction of ) Fact~\ref{fact:connected_definitions_coincide}, it is enough to show that each of these sets is connected in the matroid theoretic sense. This follows from Lemma~\ref{lem:intervals_inner_corners_connected}. (It is here that we use the assumption that $\lambda/\mu$ is a connected skew shape.)

We now show that every connected essential set arises from the corners of the skew shape. Let $[a, b]$ be a connected, non-degenerate essential set. Assume that $a$ is a column index; the case when $a$ is a row index follows by a similar argument. Now $b$ cannot be a column index greater than $a$ since otherwise $[a, b]$ would be an independent set (being a collection of columns) contradicting the fact that essential sets are dependent. Hence, two cases arise:
    
    (1) The element $b$ is a column index with $b<a$. By Lemma~\ref{lem:connected_sets_endpoints_rook}~(3), there exists row indices $i, j$ such that there is an inner corner at  $(i, a-1)$ and an outer corner in column $(j, b+1)$. By the first part of this proof, this means that $[a, i]$ and $[j, b]$ are (connected) essential sets of $M$. Since $a>b$, and $\lambda/\mu$ is a skew shape, it follows that $i<j$. We can then write the cyclic interval $[a, b]$ as \begin{align}
    [a, b] &= [a, i] \cup \cup_{k=i+1}^{j-1}[k, k] \cup [j, b], \quad \text{with} \label{eq:connected_essential_eqs1}\\ \rk([a, b]) &= \rk([a, i]) + \rk([j, b]) + \left |[i+1, j-1] \right|. \label{eq:connected_essential_eqs2}
    \end{align}
    The equality in the second line can be verified as follows. By Lemma~\ref{lem:rank_cyclic_interval}~(3), $a$ and $b$ are both column indices and hence $\rk[a, b] = c-a+b+1+y_{a, b}$, where $y_{a, b}$ equals the number of rooks of $\rho_{a}$ in the columns $[b+1, a-1]$. In this case, $y_{a, b}$ equals the number of rows in the interval $[i+1, j-1]$. Rearranging terms, we get \[
    \rk([a, b]) = (r+c-(a-1)+1)+(b-r)+j-1-i,
\]
which is precisely the right-hand side of (\ref{eq:connected_essential_eqs2}). Thus (\ref{eq:connected_essential_eqs1}) and (\ref{eq:connected_essential_eqs2}) gives us a contradiction to the fact that $[a, b]$ is a connected essential set.
    
    (2) The element $b$ is a row index. By Lemma \ref{lem:connected_sets_endpoints_rook} (1), it follows immediately that there is an inner corner of $\lambda/\mu$ at $(b, a-1)$. Also, by Lemma~\ref{lem:rank_cyclic_interval}, \[ 
    \rk([a,b]) = r+c-a+1+\ell_{a, b},
    \]
    and since $(b, a-1)$ is an inner corner, $\ell_{a, b} = 0$. Thus $(r+c-a+1, [a, b])$ is a ranked essential set such that $(b, a-1)$ is an inner corner of $\lambda/\mu$, finishing the proof.
\end{proof}

See Figure~\ref{fig:inner_outer_essential} for an example of the essential sets of a rook matroid arising from the corners of the underlying shape.

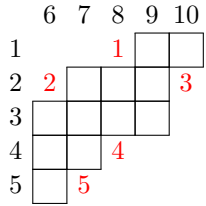
\begin{figure}[h]
    \centering
    \begin{tikzpicture}[inner sep=0in,outer sep=0in]
\node (n) {\begin{varwidth}{6cm}{
\ytableausetup{boxsize=1.25em}
\begin{ytableau} \none & \none[6] & \none[7] & \none[8] & \none[9] & \none[10]  \\ \none[1] & \none & \none & \none[$\textcolor{red}{$1$}$] &  & \\ \none[2] & \none[$\textcolor{red}{$2$}$] &  & & & \none[$\textcolor{red}{$3$}$] \\ \none[3] &  &  &  &  \\ \none[4] &  & & \none[$\textcolor{red}{$4$}$] \\ \none[5] & & \none[$\textcolor{red}{$5$}$]  \\ \end{ytableau}}\end{varwidth}};
\end{tikzpicture}

    \caption{$\lambda / \mu = 54421/31$ with corners numbered. The corresponding proper connected essential sets are: \textcolor{red}{(1)} $(2, [9, 1])$, \textcolor{red}{(2)} $(4, [7, 2])$, \textcolor{red}{(3)} $([4, [2, 9]])$, \textcolor{red}{(4)} $(2, [4, 7])$, and \textcolor{red}{(5)} $(1, [5, 6])$. In this case, together with $(5, [1,10])$, these are all the ranked essential sets of the positroid  $\rookMat_{\lambda/\mu}$.}
    \label{fig:inner_outer_essential}
\end{figure}

The following is an immediate consequence of Proposition~\ref{prop:ess_sets_rooks} and the fact that the connected essential sets of a positroid determine the facets of the corresponding matroid polytope~\cite[Corollary 3.12]{Mohammadi2024EssentialSets}. It can also be seen as a companion result to~\cite[Theorem 3.3]{KnauerMartinezSandovalRamirez}, where the authors obtain a facet description of the matroid polytope of a lattice path matroid only in terms of the combinatorics of the upper and lower bounding paths. 

\begin{proposition}
    Let $\lambda / \mu$ be a connected skew shape on $r$ rows and $c$ columns and $M = \rookMat_{\lambda / \mu}$ be the corresponding rook matroid on $\lambda / \mu$. Let $IC(\lambda/\mu)$ and $OC(\lambda / \mu)$ respectively be the sets of inner and outer corners of $\lambda / \mu$. Then the base  polytope of the rook matroid $P_{M}$ consists of all points $x \in \mathbb{R}^{r+c}$ such that \begin{align*}
        \sum_{i \in [r+c]}x_{i} &= c, \\
        \sum_{\ell \in [j+1, i]}x_{\ell} &\leq r+c-j \quad \text{for every $(i, j) \in IC(\lambda/\mu)$}, \\ 
        \sum_{\ell \in [i, j-1]}x_{\ell} & \leq j-1-r \quad  \text{for every $(i, j) \in OC(\lambda/\mu)$}. 
    \end{align*}
\end{proposition}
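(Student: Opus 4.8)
The plan is to read the polytope description directly off the two cited inputs: the structural fact that, inside the hyperplane $\sum x_i = \rk(M)$, the facets of a positroid polytope are exactly the inequalities $\sum_{\ell \in [i,j]} x_\ell \le \rk([i,j])$ attached to the \emph{connected} ranked essential sets $(r,[i,j])$ of $M$ \cite[Corollary 3.12]{Mohammadi2024EssentialSets}; and the explicit enumeration of those essential sets for a connected rook matroid supplied by Proposition~\ref{prop:ess_sets_rooks}. I would first record that $\rk(\rookMat_{\lambda/\mu}) = c$: every basis $R(\rho)\cup C(\rho)$ has size exactly $c$, since $|R(\rho)|$ counts the rooks while $|C(\rho)| = c - |R(\rho)|$ counts the unoccupied columns, so the two always sum to $c$. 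Hence $P_M$ lies in the affine hyperplane $\sum_{i\in[r+c]} x_i = c$, which supplies the equality constraint and, through the entry $(c,[1,r+c])$ of the full essential family in Proposition~\ref{prop:rank_essential_char}, absorbs the improper essential set $[1,r+c]$ rather than producing a separate facet.

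Next I would invoke Proposition~\ref{prop:ess_sets_rooks}, which states that the connected proper ranked essential sets of $M$ are precisely the pairs $(r+c-j,[j+1,i])$ for inner corners $(i,j)$ and $(l-1-r,[k,l-1])$ for outer corners $(k,l)$. Substituting these ranks and intervals into the generic facet inequality yields, for each inner corner $(i,j) \in IC(\lambda/\mu)$, the bound $\sum_{\ell\in[j+1,i]} x_\ell \le r+c-j$, and for each outer corner $(k,l) \in OC(\lambda/\mu)$ — rewritten with the statement's labels $(i,j)$ — the bound $\sum_{\ell\in[i,j-1]} x_\ell \le j-1-r$. These are exactly the two families of inequalities in the statement, so once the inputs are invoked the translation is pure bookkeeping.

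The one point that deserves genuine care, and where I expect the only real obstacle to lie, is completeness: one must be certain that the corner essential sets of Proposition~\ref{prop:ess_sets_rooks} truly exhaust \emph{all} connected proper essential sets, so that no facet of $P_M$ is dropped and, in particular, no separate coordinate inequalities $0 \le x_i \le 1$ are required. This is exactly the content of the second half of Proposition~\ref{prop:ess_sets_rooks}: a connected non-degenerate essential set $[a,b]$ with $a$ a column index must have $b$ a row index (forcing an inner corner at $(b,a-1)$ via Lemma~\ref{lem:connected_sets_endpoints_rook}(1)), the alternative $b<a$ being excluded for connected sets by the decomposition \eqref{eq:connected_essential_eqs1}--\eqref{eq:connected_essential_eqs2}, with the case $a$ a row index handled symmetrically. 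Granting this exhaustiveness, and recalling that connectedness of $\lambda/\mu$ guarantees each corner interval is itself connected in $M$ (Lemma~\ref{lem:intervals_inner_corners_connected}, through the ``if'' direction of Fact~\ref{fact:connected_definitions_coincide}), the facet description of \cite[Corollary 3.12]{Mohammadi2024EssentialSets} specializes to precisely the claimed system, which completes the proof.
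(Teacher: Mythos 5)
Your proposal is correct and follows exactly the route the paper takes: the paper gives no separate proof, stating only that the result is an immediate consequence of Proposition~\ref{prop:ess_sets_rooks} together with the fact that the connected essential sets of a positroid determine the facets of its matroid polytope \cite[Corollary 3.12]{Mohammadi2024EssentialSets}. Your additional remarks on the rank being $c$ and on the exhaustiveness of the corner essential sets are accurate and simply make explicit the bookkeeping the paper leaves implicit.
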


\subsection{A characterization theorem for Grassmann necklaces}\label{subsection:characterization_theorem}
We now turn our focus to determining when a given Grassmann necklace of a positroid corresponds to one from a rook matroid. We use the following convention when referring to sets in a $(k, n)$-Grassmann necklace $\mathcal{I} = (I_{1}, \ldots , I_{n})$ of a positroid. Set \begin{align*}
R_{i} &= I_{i} \cap [1, n-k], \, \, \,\qquad r_{i} = \min R_{i}, \qquad \qquad \qquad \, \text{for $i \neq n-k+1$,}\\
C_{i} &= I_{i} \cap [n-k+1, n], \quad c_{i} = \max ([n-k+1, n]\setminus C_{i}),\\
[n-k+1, n] \setminus C_{i} &= S_{i} = \{s_{\ell}<\ldots < s_{1}\}.
\end{align*}
Set $r_{i} = 0$ for $i\geq n+1$. We use the $R_{i}$ and $C_{i}$ notation to emphasize the roles played by row and column indices in a rook matroid. From the definitions of $R_{i}$ and $C_{i}$, it follows that $c_{i} \leq c_{i-1}$ for $i \in [2, n-k]$ and $r_{i+1} \leq r_{i}$ for $i \in [n-k+2, n]$. In the following lemma, we characterize when strict inequality holds in the case of rook matroids. 

\begin{lemma}\label{lem:outer_corners}
Let $\mathcal{I} = (I_{1}, \ldots , I_{r+c})$ be the Grassmann necklace of a rook matroid $\rookMat_{\lambda/\mu}$.  Suppose that $i \neq 1$ is a row index (resp. $i \neq r+c$ is a column index). Then the following are equivalent: \begin{enumerate}
    \item There is an outer corner (resp. inner corner) of $\lambda/\mu$ at $(i, c_{i}+1)$ (resp. at $(r_{i}-1, i-1)$). 
    \item We have $c_{i} < c_{i-1}$ (resp. $r_{i} > r_{i+1}$). 
\end{enumerate}
Each of the above also implies that $I_{i}$ is not a cyclic interval.
\end{lemma}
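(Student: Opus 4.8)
The plan is to prove the equivalence by translating both conditions into statements about the extremal rook placements $\rho_{i-1}, \rho_i, \rho_{i+1}$ and then invoking the corner-locating machinery already built in Lemma~\ref{lem:connected_sets_endpoints_rook}. I will focus on the row-index case; the column-index case is dual and follows by the symmetric argument (using inner corners and the $r_i$ quantities in place of outer corners and the $c_i$ quantities). The underlying dictionary is that $c_i = \max([r+1, r+c] \setminus C_i)$ is the largest unoccupied column index of $\rho_i$, so $c_i$ records where the rightmost ``gap'' in the column occupancy of $\rho_i$ sits.

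First I would unpack what the Grassmann necklace transition rule forces when $i$ is a row index. Since $i \in [1,r]$ is a row index and $I_i$ is a basis, the necklace axioms tell us how $I_{i}$ and $I_{i-1}$ differ: $I_{i-1}$ is obtained from $I_i$ by the single-element swap dictated by whether $i-1 \in I_{i-1}$, and since $i-1$ is also a row index, this is governed by whether $\rho_{i-1}$ occupies row $i-1$. The key observation is that $(i-1)$-extremality versus $i$-extremality differ precisely in whether one is forced to place a rook starting in row $i-1$: in $\rho_i$ the placement begins at row $i$, whereas in $\rho_{i-1}$ it begins one row earlier. I would make precise that $c_i < c_{i-1}$ happens exactly when passing from $\rho_{i-1}$ to $\rho_i$ \emph{frees up} a column on the right, i.e.\ the rightmost unoccupied column strictly increases when we drop the obligation to fill row $i-1$.

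The heart of the argument is to connect this column-freeing phenomenon to the existence of an outer corner at $(i, c_i+1)$. Recall an outer corner at $(i, c_i+1)$ means $(i, c_i+1) \notin \lambda/\mu$ while $(i, c_i), (i-1, c_i+1) \in \lambda/\mu$. I would argue the forward direction $(1) \Rightarrow (2)$ by showing that if there is an outer corner at $(i, c_i+1)$, then row $i$ terminates at column $c_i$, so $\rho_i$'s obligation to place a rook in row $i$ as far right as possible lands it at $(i, c_i)$ and leaves column $c_i+1$ and beyond available in a way that $\rho_{i-1}$ --- which must also service row $i-1$ extending past column $c_i+1$ --- does not; this produces the strict inequality $c_i < c_{i-1}$. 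For the converse $(2) \Rightarrow (1)$, I would assume $c_i < c_{i-1}$ and extract the outer corner by a Fact~\ref{fact:skew_shape_characterization} argument analogous to the one in Lemma~\ref{lem:connected_sets_endpoints_rook}~(2): the strict drop in the rightmost gap forces row $i$ to end exactly at column $c_i$ while row $i-1$ extends to at least column $c_i+1$, which is precisely the outer corner condition. The cleanest route may well be to cite Lemma~\ref{lem:connected_sets_endpoints_rook}~(2) directly: if I can exhibit that $c_i < c_{i-1}$ makes $[i, c_i]$ (or the appropriate cyclic interval ending at $c_i$) a non-degenerate essential set, then that lemma immediately delivers the outer corner at $(i, c_i+1)$.

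For the final clause --- that either equivalent condition implies $I_i$ is not a cyclic interval --- I would reason directly from the column-gap description. If $I_i$ were a cyclic interval, then the complement $[r+1, r+c] \setminus C_i$ of occupied columns, and hence the set of unoccupied columns of $\rho_i$, would itself be a cyclic interval, forcing the rooks to occupy a contiguous block of columns; combined with $\rho_i$ being non-nesting and row-extremal this pins down a very rigid ``staircase'' placement in which the rightmost gap $c_i$ cannot jump relative to $c_{i-1}$. More carefully, the strict inequality $c_i < c_{i-1}$ means the rightmost unoccupied column of $\rho_i$ sits strictly left of that of $\rho_{i-1}$; I would show this creates an occupied column to the right of an unoccupied column within $I_i$, so $I_i$ restricted to the column indices is not an up-set, and therefore $I_i$ is not a cyclic interval. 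The main obstacle I anticipate is bookkeeping the wrap-around inherent in cyclic intervals versus the linear order on $[r+1, r+c]$, together with carefully handling the boundary behavior of the extremal placements when the last rook lands in column $r+1$ rather than row $r$ (the situation flagged in the remark after Lemma~\ref{lem:rank_cyclic_interval}); keeping the correspondence between ``rightmost gap increases'' and ``outer corner appears'' watertight in those edge cases is where the care is needed.
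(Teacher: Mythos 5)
There is a genuine gap, and it starts with your dictionary: $c_i$ is \emph{not} the largest unoccupied column of $\rho_i$. Since $C(\rho)$ (hence $C_i = I_i \cap [n-k+1,n]$) is defined as the set of \emph{unoccupied} columns, the set $S_i = [n-k+1,n]\setminus C_i$ consists of the \emph{occupied} columns, and $c_i = \max S_i$ is the column of the rightmost rook of $\rho_i$. For a row index $i$, the $i$-extremal placement puts its first rook in the last cell of row $i$ and all later rooks strictly to the left, so $c_i$ is precisely the column index of the last cell of row $i$. This single observation is the whole content of the equivalence in the paper: $(i,c_i+1)$ is an outer corner iff $(i,c_i)\in\lambda/\mu$, $(i,c_i+1)\notin\lambda/\mu$ and $(i-1,c_i+1)\in\lambda/\mu$, and the first two conditions hold automatically once $c_i$ is the end of row $i$, while the third says exactly that row $i-1$ extends strictly past row $i$, i.e.\ $c_{i-1}>c_i$. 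Your ``rightmost gap'' framing inverts occupied and unoccupied, makes your first paragraph internally inconsistent ($c_i<c_{i-1}$ would mean the rightmost gap \emph{decreases}, not increases), and leaves the key identification of $c_i$ with the end of row $i$ unproved. Your proposed fallback for $(2)\Rightarrow(1)$ --- exhibiting $[i,c_i]$ as an essential set and citing Lemma~\ref{lem:connected_sets_endpoints_rook}(2) --- is also not viable: verifying the five rank equalities that make an interval essential is exactly the kind of statement that, in this paper, is \emph{deduced from} the presence of a corner (Proposition~\ref{prop:ess_sets_rooks}), so that route is circular, and in any case far heavier than the two-line direct argument.

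The final clause also does not go through as sketched. From a cyclic interval $I_i$ you cannot conclude that $C_i$ (or its complement in the columns) is a cyclic interval --- the intersection of a cyclic interval of $[n]$ with $[n-k+1,n]$ can split into a prefix and a suffix --- and the ``not an up-set'' criterion ignores exactly the wrap-around you flag as the obstacle. The paper's argument is more surgical: with $k=c_i+1$ one has $k\in I_i$ but $k-1\notin I_i$, so a cyclic interval $I_i$ would have to start at $k$; one then locates a second ``descent'' (an unoccupied column between occupied ones, or the transition from the bottommost rook's column to the row indices, split according to whether that rook sits in column $r+1$) to produce a second element of $I_i$ whose predecessor is missing, which is impossible for a cyclic interval. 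You would need to supply that case analysis; the rigidity claim about a ``staircase'' placement does not by itself rule out the wrap-around configurations.
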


\begin{proof}
For the first part of the lemma, let $\rho_{i}$ be the non-nesting rook placement corresponding to $I_{i}$. We will only handle the case when $i$ is a row index, since the column index case follows by an analogous argument. We begin by noting that the rightmost cell of row $i$ in $\lambda/\mu$ is precisely equal to $c_{i}$ since it  corresponds to the occupied column of highest index. 

By definition of $c_{i}$, it is clear that $\rho_{i}$ has a rook in the cell $(i, c_{i})$. If $c_{i} = r+c$, there can be no outer corner in row $i$. If $c_{i}<r+c$, then $\rho_{i}$ has no rook in $(i, r+c)$. Then $(i, c_{i}+1)$ is an outer corner of $\lambda/\mu$ if and only if the rightmost cell of row $i$ occurs strictly to the left of the rightmost cell of row $i-1$. This is in turn equivalent to $c_{i}<c_{i-1}$. 

For the second part, let $i \in [r]$ be a row index and suppose $\lambda/\mu$ has an outer corner at $(i, k)$. By virtue of being $i$-extremal, the rook of $\rho_{i}$ in row $i$ occurs in column $k-1$, so $k \in I_{i}$. \textcolor{black}{Assume $R_{i}$ is an interval; if not, we are immediately done. If there exists an unoccupied column $j<k$ between two occupied columns of $\rho_{i}$, then $j, k \in I_{i}$, but $k-1 \notin I_{i}$, so $I_{i}$ is not cyclic. So assume all the occupied column indices of $\rho_{i}$ also form an interval.  Let $(p, q)$ be the bottommost rook of $\rho_{i}$ on $\lambda/\mu$. If $q = r+1$, then $p, k \in I_{i}$ and $k-1 \notin I_{i}$, so $I_{i}$ is not a cyclic interval. If $q>r+1$, then $q-1, k \in I_{i}$ but each of $q, \ldots , k-1 \notin I_{i}$, and hence $I_{i}$ is not an interval.} 
\end{proof}

The next lemma gathers further properties of Grassmann necklaces of rook matroids that we will use in the characterization theorem ahead. 

\begin{lemma}\label{lem:c_and_r_inqualities}
Let $\rookMat_{\lambda/\mu}$ be a rook matroid with Grassmann necklace $\mathcal{I} = (I_{1}, \ldots , I_{n})$, where the non-nesting rook placement corresponding to each $I_{i}$ is $\rho_{i}$. Then \begin{enumerate}
    \item If $j$ is a column index, then $c_{r_{j}}\geq j-1$. 
    \item If $i$ is a row index, then~$r_{c_{i}+2}-1 \leq i$.
    \item If $(t_{j}, s_{j})$ and $(t_{j+1}, s_{j+1})$ are successive rooks of $\rho_{i}$, then either $t_{j+1} = t_{j}+1$ or $s_{j+1}=s_{j}-1$.   
\end{enumerate}  
\end{lemma}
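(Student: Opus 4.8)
The plan is to prove each of the three statements by translating them into concrete facts about the extremal rook placements $\rho_i$, relying throughout on $i$-extremality (Definition~\ref{def:i_extremal_rook_placements}) and the skew-shape characterization (Fact~\ref{fact:skew_shape_characterization}). Recall the standing notation: $r_i = \min R_i$ is the smallest occupied row index of $\rho_i$ (when $i$ is a column index, the first rook of $\rho_i$ sits in column $i-1$), and $c_i = \max([r+1,r+c]\setminus C_i)$ is the largest occupied column index of $\rho_i$. I expect that each part amounts to chasing the recursive ``place rooks as far right as possible'' rule and invoking Fact~\ref{fact:skew_shape_characterization} to witness that a forced cell actually lies on the board.

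For part (1), I would start from the column index $j$ and look at $\rho_j$, whose first rook lies in column $j-1$ and in row $r_j$. So the cell $(r_j, j-1)$ is on $\lambda/\mu$. The goal $c_{r_j} \geq j-1$ says that the $r_j$-extremal placement $\rho_{r_j}$ has an occupied column index at least $j-1$; equivalently, row $r_j$ of $\lambda/\mu$ extends at least as far right as column $j-1$. But $\rho_{r_j}$ begins by placing a rook in the \emph{last} cell of row $r_j$, so $c_{r_j}$ is exactly the rightmost column of row $r_j$. Since $(r_j, j-1)\in\lambda/\mu$, the rightmost cell of row $r_j$ is at least column $j-1$, giving $c_{r_j}\geq j-1$. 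The main care here is handling the boundary where $\rho_j$ might have its first rook in column $r+1$; I would note that the inequality still holds since then $j-1 = r$ at worst, or argue directly from the position of the first rook.

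For part (2), given a row index $i$, the placement $\rho_i$ has its topmost rook in row $i$ located in column $c_i$, so $(i, c_i)\in\lambda/\mu$ and every cell strictly right of $c_i$ in row $i$ is absent. I want to bound $r_{c_i+2}$, the smallest occupied row index of the placement for the column index $c_i+2$; this placement leaves columns $c_i+2,\dots$ empty and fills rows as far right as possible. The inequality $r_{c_i+2}-1\leq i$ says the first rook of $\rho_{c_i+2}$ (sitting in column $c_i+1$) lies in a row $\geq i-1$... I would instead show that no rook of $\rho_{c_i+2}$ can sit above row $i-1$ in column $c_i+1$, using that $(i,c_i+1)\notin\lambda/\mu$ (since $c_i$ is the last column of row $i$) together with Fact~\ref{fact:skew_shape_characterization}: if a cell $(p,c_i+1)$ with $p<i-1$ were on the board and $(i,c_i)$ is on the board, the rectangle-completion would force $(i,c_i+1)$ onto the board, a contradiction. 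This is the step I expect to be the subtlest, because I must correctly identify which column the relevant rook of $\rho_{c_i+2}$ occupies and phrase extremality precisely.

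For part (3), I would argue directly from the non-nesting and extremality conditions. Let $(t_j,s_j)$ and $(t_{j+1},s_{j+1})$ be successive rooks of $\rho_i$, so $t_j<t_{j+1}$ and, by the non-nesting (hence non-crossing) structure, $s_j>s_{j+1}$ — rooks move down-and-left as we read the placement. Extremality means that when $\rho_i$ places the rook in row $t_{j+1}$, it goes as far right as possible subject to not nesting the rook at $(t_j,s_j)$. If neither $t_{j+1}=t_j+1$ nor $s_{j+1}=s_j-1$ held, then there would be a gap — a cell of $\lambda/\mu$ strictly between the two rooks both in row and column, specifically $(t_{j+1}, s_{j+1}+1)$ would be an admissible cell to the right of the placed rook without creating a nesting with $(t_j,s_j)$, contradicting that the rook was placed as far right as possible. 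I would make this precise by showing $(t_{j+1}, s_j-1)\in\lambda/\mu$ via Fact~\ref{fact:skew_shape_characterization} (from $(t_j,s_j)$ and the cell below/left), so that if $s_{j+1}<s_j-1$ and $t_{j+1}>t_j+1$ simultaneously, extremality is violated. The heart of part (3) is thus a clean contradiction of extremality, and I anticipate it to be the most straightforward of the three once the geometry is set up.
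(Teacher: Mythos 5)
Your parts (1) and (3) are fine and essentially match the paper: (1) is exactly the paper's comparison of the rook of $\rho_j$ at $(r_j,j-1)$ with the $r_j$-extremal rook at the last cell of row $r_j$, and (3) is the same extremality argument the paper leaves as ``immediate from the construction'' (though note that Fact~\ref{fact:skew_shape_characterization} alone will not hand you $(t_{j+1},s_j-1)\in\lambda/\mu$; in the bad case the real contradiction is that the whole column $s_j-1$ would be empty, violating the standing assumption that every column contains a cell).

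Part (2), however, is genuinely broken, in two ways. First, you have the inequality backwards: $r_{c_i+2}-1\leq i$ is an \emph{upper} bound on $r_{c_i+2}=\min R_{c_i+2}$, i.e.\ it asserts that the topmost occupied row of $\rho_{c_i+2}$ is at index at most $i+1$. Your plan --- ``show that no rook of $\rho_{c_i+2}$ can sit above row $i-1$'' --- would instead establish the lower bound $r_{c_i+2}\geq i-1$, which is not what is claimed and is in fact false in general. Second, your invocation of Fact~\ref{fact:skew_shape_characterization} is to a pair of cells in the wrong relative position: $(p,c_i+1)$ with $p<i$ lies strictly North-East of $(i,c_i)$, and the fact only forces rectangle completion for a North-West/South-East pair, so no contradiction arises. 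Concretely, take $\lambda/\mu=321$ with $r=c=3$ (columns labelled $4,5,6$) and $i=3$: then $\rho_3$ has its sole rook at $(3,4)$, so $c_3=4$, and $\rho_{6}$ has rooks at $(1,5)$ and $(2,4)$, so $r_{6}=1$. Here $(1,5)\in\lambda/\mu$ and $(3,4)\in\lambda/\mu$ but $(3,5)\notin\lambda/\mu$, and your intermediate claim fails while the lemma's inequality $r_6-1=0\leq 3$ holds comfortably. The paper's argument is the mirror image of yours: the first rook of $\rho_{c_i+2}$ sits at the \emph{topmost} cell of column $c_i+1$ by $(c_i+2)$-extremality, and that topmost cell cannot lie strictly \emph{below} row $i$, because a cell $(p,c_i+1)$ with $p>i$ together with $(i,c_i)\in\lambda/\mu$ is a genuine North-West/South-East pair and would force $(i,c_i+1)\in\lambda/\mu$, contradicting that $c_i$ is the last column of row $i$; hence $r_{c_i+2}\leq i\leq i+1$. (One also needs the degenerate case $c_i+2>n$, which the paper's convention $r_m=0$ for $m\geq n+1$ disposes of.)
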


\begin{proof}
Let $\rho_{k}$ be the rook placement corresponding to $I_{k}$. Property (3) states that successive cells of $\rho_{i}$ either have adjacent rows or adjacent columns; this follows immediately from the construction of $i$-extremal rook placements. 

For (1), we need to show that column $j-1$ is either equal to, or occurs strictly to the left of, column $c_{r_{j}}$. If $j$ is a column index then, by definition, there is a rook $A$ of $\rho_{j}$ at $(r_{j}, j-1)$. Also, $\rho_{r_{j}}$ has a rook $B$, placed as far to the right as possible, at $(r_{j}, c_{r_{j}})$. Comparing the column indices of rooks $A$ and $B$ and noting that $B$ is $r_{j}$-extremal, (1) follows. 

For (2), we need to show that row $i$ is either equal to, or occurs strictly below, row $r_{c_{i}+2}-1$. There is a rook of $\rho_{i}$ at $(i, c_{i})$; move it horizontally one step to $(i, c_{i}+1)$ and call it rook $E$. (It will not matter for us if this cell lies in $\lambda/\mu$ or not.) Now, the rook placement $\rho_{c_{i}+2}$ has empty columns from $c_{i}+2$ to $r+c$, with the first rook occurring at $(r_{c_{i}+2}, c_{i}+1)$. By virtue of $(c_{i}+2)$-extremality, this rook is placed in the topmost cell of column $c_{i}+1$; call it rook $F$. Now rooks $E$ and $F$ both occur in column $c_{i}+1$ with the latter occurring in topmost cell of the column. It follows that the row index of rook $F$ must be less than or equal to that of rook $E$; this is precisely the statement in $(2)$. 
\end{proof}

In addition, we set the following notation for a Grassmann necklace $\mathcal{I}$, intended to model the outer and inner corners of the skew shape:
\begin{align}
    OC(\mathcal{I}) &= \{(i, c_{i}+1): i \in [2, n-k], c_{i}<c_{i-1}\}\label{eq:IC_definition},\\ IC(\mathcal{I}) &= \{(r_{j}-1, j-1): j \in [n-k+2, n-1], r_{j}>r_{j+1}, r_{j} \neq 1\}\label{eq:OC_definition}.
    \end{align}

Indeed, when $\mathcal{I}$ is the Grassmann necklace of a rook matroid we have: 
\begin{corollary}\label{cor:inner_corners-coincide}
    Let $\mathcal{I}$ be the Grassmann necklace of a rook matroid $\rookMat_{\lambda/\mu}$. Then $IC(\mathcal{I})$ and $OC(\mathcal{I})$ coincide with the sets of inner and outer corners of $\lambda/\mu$ respectively. 
\end{corollary}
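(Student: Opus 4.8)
The plan is to obtain the whole statement as a bookkeeping consequence of Lemma~\ref{lem:outer_corners}, the only real work being a careful matching of the boundary indices appearing in the definitions of $OC(\mathcal{I})$ and $IC(\mathcal{I})$. Throughout I use $n=r+c$ and $k=c$, so that row indices are $[1,r]$ and column indices are $[r+1,r+c]$, and I recall two facts that are immediate from the definition of the skew Ferrers board: the cells of each fixed row (resp.\ column) occupy a contiguous interval of columns (resp.\ rows); and, for a row index $i$, the number $c_i$ is the column of the rightmost cell of row $i$, while for a column index $j \ge r+2$ the number $r_j$ is the row of the topmost cell of column $j-1$ (this last identification is exactly what the column case in the proof of Lemma~\ref{lem:outer_corners} establishes, since the topmost rook of the $j$-extremal placement sits in the rightmost available column $j-1$).

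For the outer corners I would argue as follows. No outer corner can lie in row $1$, since an outer corner $(p,q)$ requires $(p-1,q)\in\lambda/\mu$ and hence $p\ge 2$; moreover $(p,q-1)\in\lambda/\mu$ together with $(p,q)\notin\lambda/\mu$ forces $q-1$ to be the rightmost cell of row $p$, i.e.\ $q-1=c_p$ and $q=c_p+1$. Thus every outer corner has the form $(p,c_p+1)$ with $p\in[2,r]$, and by Lemma~\ref{lem:outer_corners} such a cell is an outer corner precisely when $c_p<c_{p-1}$. This is exactly the condition defining membership in $OC(\mathcal{I})$, and conversely every element of $OC(\mathcal{I})$ is an outer corner by the same equivalence, so the two sets coincide.

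For the inner corners the argument is dual but needs the boundary columns disposed of by hand. Using contiguity of the cells in each row I first observe that there is no inner corner in column $r+c-1$: an inner corner $(p,q)$ with $q=r+c-1$ would need $(p,q+1)=(p,r+c)\in\lambda/\mu$, which by contiguity forces $(p,r+c-1)\in\lambda/\mu$, contradicting $(p,q)\notin\lambda/\mu$. Hence every inner corner lies in a column $q\in[r+1,r+c-2]$, i.e.\ is indexed by $j=q+1\in[r+2,r+c-1]$, matching the index range in the definition of $IC(\mathcal{I})$. For such a $j$ the topmost cell of column $q=j-1$ sits in row $r_j$, so by contiguity the only candidate for an inner corner in column $q$ is the cell $(r_j-1,j-1)$ directly above it; by Lemma~\ref{lem:outer_corners} this cell is genuinely an inner corner if and only if $r_j>r_{j+1}$. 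Since a genuine inner corner is an actual cell, its row $r_j-1$ is at least $1$, so $r_j\neq 1$ holds automatically (equivalently, in this index range $r_{j+1}\ge 1$, so $r_j>r_{j+1}$ already forces $r_j\ge 2$). Letting $j$ run over $[r+2,r+c-1]$ therefore produces exactly the inner corners of $\lambda/\mu$, which is precisely $IC(\mathcal{I})$.

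The step I expect to be the main obstacle is the boundary analysis rather than any single computation: one must verify that excluding $i=1$ (for $OC$) and the indices $j=r+1$ and $j=r+c$ (for $IC$) discards only cells that cannot be corners — in particular the slightly non-obvious claim that no inner corner occurs in the penultimate column $r+c-1$ — so that $OC(\mathcal{I})$ and $IC(\mathcal{I})$ neither omit a corner nor spuriously include a non-corner. Once this is settled, everything else is a direct translation through Lemma~\ref{lem:outer_corners}.
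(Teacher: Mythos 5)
Your outer-corner half is correct and is essentially the intended (one-line) deduction from Lemma~\ref{lem:outer_corners}. The gap is in the inner-corner half, specifically in the boundary claim that no inner corner of $\lambda/\mu$ can occur in column $r+c-1$. The argument you give --- that $(p,r+c)\in\lambda/\mu$ forces $(p,r+c-1)\in\lambda/\mu$ by contiguity of row $p$ --- is false: contiguity only says that the cells of row $p$ form an interval of columns, and that interval may be the single cell $(p,r+c)$. Concretely, for $\lambda/\mu=33/2$ (so $r=2$, $c=3$, $n=5$, $k=3$), row $1$ is the single cell $(1,5)$ and row $2$ is $\{(2,3),(2,4),(2,5)\}$; the cell $(1,4)$ lies in column $r+c-1=4$ and is an inner corner, since $(2,4)$ and $(1,5)$ both lie in the shape. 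So the claim you yourself flag as ``slightly non-obvious'' is in fact not true, and the exclusion of the last column index from the range of $j$ cannot be justified this way.

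What actually happens at this boundary is that the inner corner in column $n-1$ is the one governed by $j=n$: the candidate cell is $(r_n-1,n-1)$, and since the topmost cell of column $n$ always lies in row $1$, this cell is an inner corner precisely when $r_n\neq 1$. The convention $r_i=0$ for $i\geq n+1$, set just before Lemma~\ref{lem:outer_corners}, makes the inequality $r_n>r_{n+1}$ automatic, which is exactly what it is there for; in the example above one has $r_5=2>r_6=0$ and $r_5\neq 1$, producing the corner $(1,4)$. So rather than proving that column $r+c-1$ contains no inner corner, you should treat $j=n$ as a legitimate index (reading the upper limit in the displayed definition of $IC(\mathcal{I})$ as $n$ rather than $n-1$, which is what the convention $r_{n+1}=0$ presupposes) and check that Lemma~\ref{lem:outer_corners} together with that convention yields the corner in column $n-1$ correctly. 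The rest of your index bookkeeping --- the exclusion of $j=n-k+1$, of column $n$ itself, and of row $1$ on the outer-corner side, and the identification of $c_i$ and $r_j$ with the rightmost cell of row $i$ and the topmost cell of column $j-1$ --- is sound.
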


\begin{proof}
    This is an immediate consequence of Lemma~\ref{lem:outer_corners}. 
\end{proof}

Recall the notation $R_{i}, C_{i}$ attached to a Grassmann necklace $\mathcal{I}$ from the beginning of this subsection. We will order their elements as \[ R_{i} = \{t_{1}< \ldots < t_{\ell} \}, \quad [n-k+1, n] \setminus C_{i} = S_{i} =  \{s_{\ell}<\ldots < s_{1}\}.\] 

We do so because when $\mathcal{I}$ is the Grassmann necklace of a rook matroid, then the non-nesting rook placement $\rho_{i}$ corresponding to $I_{i}$ is $\rho_{i} = \{(t_{i}, s_{i}): i = 1, \ldots ,\ell\}$. 

We use the above notation to state the following theorem which combines the previous lemmata to give necessary and sufficient conditions for a given positroid to be a rook matroid, answering a question of Lam~\cite{Lam2024PrivateCommunication}. 

\begin{theorem}\label{grassmann_necklace_characterization}
    Let $M$ be a loopless and coloopless positroid of rank $k$ on $[n]$. Let $\mathcal{I} = (I_{1}, \ldots , I_{n})$ be the Grassmann necklace of $M$. Then $M$ is a rook matroid if and only if all the following conditions hold:~\begin{enumerate}
        \item $I_{n-k+1} = [n-k+1, n]$,
         \item If $j \in [n-k+2, n]$ then $c_{r_{j}}\geq j-1$, 
    \item If $i \in [1, n-k]$ then~$r_{c_{i}+2}-1 \leq i$,
    \item If $i \in [n-k+2, n]$ and $t_{j}, t_{j+1} \in R_{i}$ and $t_{j}+1, \ldots , t_{j+1}-1 \notin R_{i}$, then $(t_{j+1}-1, s_{j+1}) \in IC(\mathcal{I})$,~and 
    \item If $i \in [1, n-k]$ and $s_{j}, s_{j+1} \in S_{i}$, and $s_{j}-1, \ldots s_{j+1}-1 \in C_{i}$, then~$(t_{j+1}, s_{j+1}+1) \in OC(\mathcal{I})$.
    \end{enumerate}
\end{theorem}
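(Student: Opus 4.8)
The plan is to treat the two implications separately. I write $r=n-k$ and $c=k$, so that $[1,r]$ indexes rows and $[r+1,n]$ indexes columns, matching the convention for $\rookMat_{\lambda/\mu}$. The forward (``only if'') implication will be assembled directly from the structural results already proved, while the reverse (``if'') implication will proceed by reconstructing a skew shape $\lambda/\mu$ from the necklace data $(c_i)$ and $(r_j)$ and then checking that the $i$-extremal placements of $\rookMat_{\lambda/\mu}$ reproduce $\mathcal{I}$; since a positroid is determined by its Grassmann necklace, this gives $M=\rookMat_{\lambda/\mu}$.

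For the ``only if'' direction, suppose $M=\rookMat_{\lambda/\mu}$ and use \cref{prop:grassmann_of_rooks} to identify each $I_i$ with the $i$-extremal placement $\rho_i$. Condition (1) holds because the $(r+1)$-extremal placement leaves every column unoccupied and hence carries no rook, so $I_{n-k+1}=[r+1,n]$. Conditions (2) and (3) are exactly parts (1) and (2) of \cref{lem:c_and_r_inqualities}. For (4) and (5) I invoke \cref{cor:inner_corners-coincide}, which identifies $IC(\mathcal{I})$ and $OC(\mathcal{I})$ with the genuine inner and outer corners of $\lambda/\mu$: if $i$ is a column index and $(t_j,s_j),(t_{j+1},s_{j+1})$ are consecutive rooks of $\rho_i$ whose rows are not adjacent, then part (3) of \cref{lem:c_and_r_inqualities} forces $s_{j+1}=s_j-1$, and $i$-extremality shows that $(t_{j+1}-1,s_{j+1})$ is an inner corner, which is the content of (4); the symmetric argument for a row index and a column-gap gives (5).

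The substance is the ``if'' direction. Assuming (1)--(5), I first reconstruct a candidate board. For each row index $i$ I read off the right boundary of row $i$ as the column $c_i$ (the largest occupied column of $\rho_i$), and for each column index $j$ I read off the top of column $j-1$ as the row $r_j$ (recall that the first rook of $\rho_j$ sits in $(r_j,j-1)$); let $\lambda/\mu$ be the board delimited by these two boundaries. The first task is to verify that $\lambda/\mu$ is a genuine skew shape with $r$ nonempty rows and $c$ nonempty columns. Weak monotonicity of $(c_i)$ and of $(r_j)$ is automatic from the necklace axioms (as already noted in the text), the loopless and coloopless hypothesis excludes empty rows and columns, condition (1) pins down the numbers of rows and columns, and conditions (2) and (3) guarantee that the left boundary read off from the column data is compatible with the right boundary read off from the row data, so that the two boundaries indeed bound a nonempty skew shape (via \cref{fact:skew_shape_characterization}).

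It then remains to show that the prescribed $I_i$ is the $i$-extremal placement on this $\lambda/\mu$ for every $i$. The placement $\rho_i=\{(t_m,s_m)\}$ encoded by $I_i$ is automatically non-attacking and non-nesting, since $t_1<\cdots<t_\ell$ while $s_1>\cdots>s_\ell$; what must be checked is that each rook lies on $\lambda/\mu$ and that the placement is pushed as far to the upper right as the shape allows, i.e.\ that it is extremal in the sense of \cref{def:i_extremal_rook_placements}. This is exactly where conditions (4) and (5) are used: they assert that every row-gap (for a column-index $i$) and every column-gap (for a row-index $i$) between consecutive rooks coincides with an inner or outer corner of $\lambda/\mu$, so the shape boundary blocks any rook from being advanced, while conditions (2) and (3) ensure that no rook is forced off the board. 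I expect this matching step to be the main obstacle: one must translate the purely combinatorial gap conditions into precise statements about which cells of the reconstructed $\lambda/\mu$ are present, and then argue---splitting according to whether consecutive rooks differ by an adjacent row or an adjacent column as in \cref{lem:c_and_r_inqualities}(3)---that the greedy procedure of \cref{def:i_extremal_rook_placements} outputs precisely $\rho_i$. Organizing this as an induction on $i$ along the necklace, using the necklace relations to step from $I_i$ to $I_{i+1}$ in lockstep with the $i$-extremal placements, should keep the bookkeeping manageable.
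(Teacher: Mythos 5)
Your overall strategy coincides with the paper's: the ``only if'' direction is assembled from \cref{prop:grassmann_of_rooks}, \cref{lem:c_and_r_inqualities} and \cref{cor:inner_corners-coincide} exactly as in the text, and that half of your argument is complete. For the ``if'' direction you also follow the paper's route --- reconstruct a skew shape from the boundary data encoded in $(c_i)$ and $(r_j)$ (equivalently, from the corner sets $IC(\mathcal{I})$ and $OC(\mathcal{I})$), then show the $i$-extremal placements on that shape reproduce $\mathcal{I}$. The problem is that the final matching step, which you yourself flag as ``the main obstacle,'' is precisely where essentially all of the work lies, and you leave it as a plan rather than a proof. The paper's verification that $J_i = I_i$ is a genuine induction with a nontrivial case analysis: for a column index $i$ one must separately treat $r_i < r_{i-1}$ versus $r_i = r_{i-1}$, and in the latter case whether $R_{i-1}$ is an interval, and if so whether the new element of $I_i$ is a row or a column index. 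In each branch, condition (4) (resp.\ (5)) is used not merely to say ``the boundary blocks the rook,'' but to rule out specific alternative values for the element $I_i\setminus I_{i-1}$ (e.g.\ showing the new row index must be exactly $t_j+1$ by deriving a contradiction with an already-placed inner corner), and the skew-shape geometry (\cref{fact:skew_shape_characterization}) is invoked to show the extremal placement actually acquires a rook in the claimed cell. None of this is routine bookkeeping, and asserting that it ``should keep the bookkeeping manageable'' does not discharge it.

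Two smaller omissions: you do not treat the degenerate case in which $IC(\mathcal{I})$ and $OC(\mathcal{I})$ are both empty --- the paper shows conditions (1)--(5) then force every $I_i$ to be a cyclic interval, so $M=U_{k,n}=\rookMat_{(n-k)^k}$, and this case must be separated out before one can speak of ``the skew shape determined by the corners.'' You also do not verify that distinct necklaces satisfying (1)--(5) cannot collapse to the same boundary data, nor that the two boundaries you read off actually close up into a shape with exactly $r$ rows and $c$ columns (the paper uses conditions (2) and (3) here to show no prospective outer corner sits North-West of a prospective inner corner). As written, the proposal is a correct outline of the paper's own argument with its central inductive verification missing.
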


\begin{proof} 
    To see that conditions (1) to (5) are necessary, recall Proposition~\ref{prop:grassmann_of_rooks} that specifies that for a Grassmann necklace $\mathcal{I}$ of a rook matroid $\rookMat_{\lambda/\mu}$ each $I_{i}$ corresponds to the $i$-extremal non-nesting rook placement on $\lambda/\mu$. For (1), the $(n-k+1)$-extremal rook placement corresponds to the empty rook placement, which is represented by $[n-k+1, n]$. Conditions (2) and (3) above respectively correspond to points (1) and (2) of Lemma~\ref{lem:c_and_r_inqualities}. For (4), let $i$ be a column index and suppose $\rho_{i}$ has successive rooks at $(t_{j}, s_{j})$ and $(t_{j+1}, s_{j+1})$ such that the rows in between $t_{j}$ and $t_{j+1}$ are unoccupied. We want to show that $(t_{j+1}-1, s_{j+1}) \in IC(\mathcal{I})$, which by Corollary~\ref{cor:inner_corners-coincide}, is equivalent to showing that  $(t_{j+1}-1, s_{j+1})$ is an inner corner of $\lambda/\mu$. By Lemma~\ref{lem:c_and_r_inqualities}~(4), we have $s_{j+1} = s_{j}-1$. Clearly $(t_{j+1}-1, s_{j+1})$ does not lie in $\lambda/\mu$, since otherwise $\rho_{i}$ would have a rook there which by hypothesis, it does not. In contrast $(t_{j+1}, s_{j+1})$ lies in $\lambda /\mu$ since $\rho_{i}$ has a rook at that cell. The cell $(t_{j+1}-1, s_{j})$ also lies in $\lambda/\mu$, since $(t_{j}, s_{j})$ does and $\lambda/\mu$ is a skew shape. It follows that $(t_{j+1}-1, s_{j+1}) \in IC(\mathcal{I})$. A similar argument establishes (5) in the case of rook matroids. 

    Now suppose conditions (1) to (5) hold for a given Grassmann necklace~$\mathcal{I}$. We will define a skew shape from $\mathcal{I}$ and show that its extremal non-nesting rook placements coincide exactly with the sets in $\mathcal{I}$. Recall the definition of $IC(\mathcal{I})$ and $OC(\mathcal{I})$ from Equations~\ref{eq:IC_definition} and~\ref{eq:OC_definition}.  

\textbf{Claim 1}: If both $OC(\mathcal{I})$ and $IC(\mathcal{I})$ are empty, then each $I_{i}$ is a cyclic interval for $i=1, \ldots , n$.

If $OC(\mathcal{I})$ is empty then for all $i \in [2, n-k]$ we have $c_{i} =c_{i-1}$, which in turn implies that $c_{i} = c_{1}$. If $c_{1} < n$, it follows that $n \in I_{1}$. Since $I_{1} = (I_{n}\setminus \{n\}) \cup j$ for some $j \in [n]$, it follows that that $j = n$ and hence $I_{n}=I_{1}$. This implies that $n$ is a loop of the positroid, a contradiction. Thus $c_{i} = c_{1} = n$, for all $i \in [2, n-k]$. It follows similarly that if $IC(\mathcal{I})$ is empty, $r_{i} = r_{n} = 1$ for all $i \in [n-k+2, n]$. To see that each $I_{i}$ is a cyclic interval, note that $IC(\mathcal{I})$ and $OC(\mathcal{I})$ being empty together with conditions (4) and (5) imply that neither of $R_{i}$ or $C_{i}$ can have non-consecutive elements. Hence each $R_{i}$ and $C_{i}$ are intervals. \textcolor{black}{Then $I_{i} = R_{i} \cup C_{i}$ is a cyclic interval, since for $i \in [n-k+2, n]$, we have $n, 1 \in I_{i}$; for $i \in [1, n-2k]$, we have $C_{i} = \emptyset$; and finally, for $i \in [n-2k+1, n]$ we have $n-k, n-k+1 \in I_{i}$. Thus the claim holds.} 

Since each $I_{i}$ is a cyclic interval of size $k$, the given positroid must be the uniform matroid $U_{k, n}$ which is exactly equal to the rook matroid $\rookMat_{(n-k)^{k}}$; see Example~\ref{example1:rectangles}. Now consider the case when $IC(\mathcal{I})$ and $OC(\mathcal{I})$ are not simultaneously empty. 

\textbf{Claim 2:} The sets $IC(\mathcal{I})$ and $OC(\mathcal{I})$ respectively define the inner and outer corners of a skew shape~$\lambda/\mu$.

Label the rows of the prospective skew shape from $1$ to $n-k$ and the columns from $n-k+1$ to $n$. We need to show that $IC(\mathcal{I})$ and $OC(\mathcal{I})$ define valid inner and outer corners of a skew shape respectively. By definition, the sequence of second coordinates of the elements of $OC(\mathcal{I})$ and the sequence of first coordinates of $IC(\mathcal{I})$ are both strictly decreasing. \textcolor{black}{We need to show that no element of $OC(\mathcal{I})$ occurs North-West of an element of $IC(\mathcal{I})$. In symbols, this means that for any row index $i$ and column index $j$ such that $i\leq r_{j-1}$ we need $j-1 < c_{i}+1$. Since $(c_m)_m$ is a decreasing sequence, $c_{r_{j}}<c_{i}<c_{i}+1$. By condition (2) on $IC(\mathcal{I})$, we have $c_{r_{j}}\geq j-1$ for column indices $j$ and thus $j-1<c_{i}+1$. Similarly, if $i$ is a row index and $j$ is a column index such that $c_{i}+1<j-1$ then $r_{j}-1<i$: since $(r_{m})_m$ is a decreasing sequence, we have $r_{j}<r_{c_{i}+2}\leq i+1$, where the second inequality uses condition (3).} Further, conditions (4) and (5) ensure that two distinct Grassmann necklaces $\mathcal{I}$ define distinct collections $IC(\mathcal{I}), OC(\mathcal{I})$. Thus, $IC(\mathcal{I})$ and $OC(\mathcal{I})$ define valid inner and outer corners of a skew shape which is hence uniquely determined; let $\lambda/\mu$ be this skew shape. To finish, we need to prove the following claim:

\textbf{Claim 3:} If $J_{i}$ is the set corresponding to $\rho_{i}$, the $i$-extremal non-nesting rook placement on $\lambda/\mu$, then the two Grassmann necklaces must be equal:  $J_{i} = I_{i}$ for $i=1, \ldots , n$. 

By construction, $J_{n-k+1} = [n-k+1, n] = I_{n-k+1}$, where the second equality follows from condition (1). We will show that for $i \in [n-k+2, n]$, $I_{i} = J_{i}$ by induction. A similar argument using condition (5) instead of (4) shows that $I_{i} = J_{i}$ for $i \in [1, n-k]$, so we omit the proof of this case. Consider $I_{n-k+2}$, the base case; two possibilities arise. 

If $I_{n-k+2}$ is cyclic, it must be equal to $ [n-k+2, 1]$. Then by Lemma~\ref{lem:outer_corners}, $r_{n-k+2} = r_{n-k+3}$. Now, to show that $J_{n-k+2} = I_{n-k+2}$, it suffices to show that $1 \in J_{n-k+2}$. If this does not hold, then $J_{n-k+2} \cap [1, n-k] = \{j\}$, with $j \geq 2$. This implies that the first rook of $\rho_{n-k+2}$ is in row $j$, which implies that $\lambda/\mu$ has an inner corner at $(j-1, n-k+1)$. By construction of the skew shape, $(j-1, n-k+1) \in IC(\mathcal{I})$. In other words, $r_{n-k+2} > r_{n-k+3}$, a contradiction. Thus $I_{n-k+2} = J_{n-k+2} = [n-k+2, 1]$.  

Now suppose $I_{n-k+2}$ is not cyclic. Then $I_{n-k+2} = [n-k+2, n] \cup \{j_{1}\}$ and $J_{n-k+2} = [n-k+2, n] \cup \{j_{2}\}$ for some $j_{1}, j_{2} \geq 2$. \textcolor{black}{By definition, $r_{n-k+2} = j_{1}$.  Since $j_{2} \geq 2$, the lone rook of $\rho_{n-k+2}$ occurs at $(j_{2}, n-k+1)$. By extremality of $\rho_{n-k+2}$, this means that $\lambda/\mu$ must have an inner corner at $(j_{2}-1, n-k+1)$.  By definition of $IC(\mathcal{I})$, it follows that this corner is also equal to $(r_{n-k+2}-1, n-k+1)$, which in turn implies that $j_{1} = j_{2}$.} Thus $I_{n-k+2} = J_{n-k+2}$ in this scenario as well, establishing the base case. 

Now consider $I_{i}$ for $i \in [n-k+3, n]$ and suppose $I_{i-1} = J_{i-1}$. We will show that $I_{i} = J_{i}$. Once again, two cases arise: 

(i) $r_{i}<r_{i-1}$: Then $r_{i} \in I_{i}$, but $r_{i} \notin I_{i-1}$ since $r_{i-1} = \min I_{i-1}$. So $I_{i} = (I_{i-1} \setminus i-1) \cup r_{i}$ and hence it suffices to show that $r_{i} \in J_{i}$. Now $r_{i} \in J_{i}$ if and only if the first rook of $\rho_{i}$ occurs in row $r_{i}$ which in turn occurs if and only if $(r_{i}, i-1) \in \lambda/\mu$. If there is no inner corner in column $i-1$, and $\min J_{i} = 1$, this hold vacuously and we are done. \textcolor{black}{Now suppose there is no inner corner in column $i-1$, and let $2 \leq x = \min J_{i}$. Then $(x, i-1) \in \lambda/\mu$, since the first rook of $\rho_{i}$ lies there. Suppose $x < r_{i}$, then $\lambda/\mu$ must have some cell $(r_{i}, m)$ South-East of $(x, i-1)$, or else $r_{i}$ would be an empty row. By Fact~\ref{fact:skew_shape_characterization}, it follows that $(r_{i}, i-1) \in \lambda/\mu$ and we are done. So   assume $r_{i}<x$.} Since there is no inner corner in column $i-1$, the first cell of columns $i-1$ and $i$ occurs at the same row index, $x$. Then there must be an inner corner at $(x-1, j-1)$, for some column index $j>i$. By the definition of $IC(\mathcal{I})$, $x-1 = r_{j}-1$ which implies that $x = r_{j}$. However, since $j>i$, $x = r_{j} \leq r_{i} < x$, a contradiction. Thus, there must be an inner corner in column $i-1$, the corresponding row index of which is $r_{i}-1$. This immediately implies that $(r_{i}, i-1) \in \lambda/\mu$, and hence $I_{i} = J_{i}$. 

(ii) $r_{i} = r_{i-1}$. We consider two further subcases:

(a) $R_{i-1}$ is not an interval. Suppose $R_{i-1} = \{t_{1}<t_{2}<\ldots \}$. Since $R_{i-1}$ is not an interval, there exists least $j$ such that $t_{j}, t_{j+1} \in R_{i-1}$ and $t_{j}+1 \in [1, n-k] \setminus  R_{i-1}$. The rooks of $\rho_{i-1}$ in rows $t_{j}, t_{j+1}$ occur at $(t_{j}, s_{j})$ and $(t_{j+1},s_{j}-1)$ (by Lemma~\ref{lem:c_and_r_inqualities} (4)). Also, since $t_{j}, t_{j+1} \in R_{i-1}$ and $t_{j}+1, \ldots , t_{j+1}-1 \notin R_{i-1}$, by condition (4) it follows that $(t_{j+1}-1, s_{j+1}) \in IC(\mathcal{I})$ and is hence an inner corner of $\lambda/\mu$.  
Let $I_{i} \setminus I_{i-1} = \{b\}$. If $b> t_{j+1}$, then reasoning similarly as in the last sentence of the last paragraph, we would have an inner corner at $(t_{j+1}-1, s_{j})$, which is not possible since we already have an inner corner at $(t_{j+1}-1, s_{j+1})$.  So $t_{j} < b < t_{j+1}$. \textcolor{black}{(We cannot have $b=t_{j+1}$ since $t_{j+1} \in I_{i-1}$.)} We claim that $b = t_{j}+1 \in R_{i}$. If not, then $t_{j}, b \in R_{i}$ and $t_{j}+1, \ldots , b-1 \notin R_{i-1}$. By condition (4), it follows that $(b-1, s_{j})$ is an inner corner of $\lambda/\mu$, which is a contradiction to the fact that there is a cell of $\lambda/\mu$ above $(b-1, s_{j})$ (since $(t_{j}, s_{j}) \in \lambda/\mu$ and $b>t_{j}$). Thus $I_{i} = I_{i-1} \setminus (i-1) \cup (t_{j}+1)$. Since $J_{i}$ and $I_{i}$ differ up to a row index, we will be done if we can show that $t_{j}+1 \in J_{i}$. We claim that $\rho_{i}$ has a rook at $(t_{j}+1, s_{j})$. Since $s_{j}, s_{j}-1 \notin C_{i-1}$, neither element lies in $C_{i}$ either. Since $C_{i} = [n-k+1, n] \cap J_{i}$, columns $s_{j}-1$ and $s_{j}$ are occupied in $\rho_{i}$. The only way this is possible is if the rook of $\rho_{i-1}$ at $(t_{j+1}, s_{j}-1)$ maintains its position in $\rho_{i}$ while the rook of $\rho_{i-1}$ at $(t_{j}, s_{j})$ moves to $(t_{j}, s_{j}+1)$ in $\rho_{i}$. To ensure that column $s_{j}$ is occupied in $\rho_{i}$ while maintaining the non-nesting nature of the configuration, there must be a rook of $\rho_{i}$ at $(t_{j}+1, s_{j})$, which lies on $\lambda/\mu$, since $(t_{j}+1, s_{j}-1)$ is an inner corner. This implies that $t_{i}+1\in J_{i}$ and hence $I_{i} = J_{i}$, finishing the proof of Case (ii) (a). See Figure~\ref{fig:case_ii_a_eg} for an example of this case. 

\begin{figure}[ht!]
    \centering
    \begin{subfigure}{0.27\textwidth}
\centering
   \begin{tikzpicture}[inner sep=0in,outer sep=0in]
\node (n) {\begin{varwidth}{6cm}{
\ytableausetup{boxsize=1.25em}
  \begin{ytableau} \none & \none[] & \none[\scriptstyle{s_{j}-1}] & \none[\scriptstyle{s_{j}}] & \none[] & \none[]  \\ \none[\scriptstyle{t_{j}}] & \none & \none & \rook & \scalebox{0.5}{\blackrook} &  \\ \none[\scriptstyle{t_{j}+1}] & \none & \none & \scalebox{0.5}{\blackrook} &  &  \\ \none[\scriptstyle{b}] & \none & \none &  &  \\ \none[\scriptstyle{t_{j+1}}] & \none & \rook \scalebox{0.5}{\blackrook} &  &  \\ \none[] &  &  \\ \none[] &  \\ \end{ytableau}}\end{varwidth}};
\end{tikzpicture}
\caption{$\rho_{i-1}$ in white and $\rho_{i}$ in black.}
    \label{fig:case_ii_a_eg}
\end{subfigure}
\hspace{1cm}
\begin{subfigure}{0.27\textwidth}
\centering
\begin{tikzpicture}[inner sep=0in,outer sep=0in]
\node (n) {\begin{varwidth}{6cm}{
\ytableausetup{boxsize=1.25em}
 \begin{ytableau} \none & \none[m'] & \none[m] & \none[] & \none[] & \none[] & \none[]  \\ \none[] & \none &  & \rook & \scalebox{0.5}{\blackrook}  &  &  \\ \none[] &  & \rook & \scalebox{0.5}{\blackrook}  &  \\ \none[j] & \rook \scalebox{0.5}{\blackrook}  \\ \none[] &  \\ \end{ytableau}
 }\end{varwidth}};
\end{tikzpicture}
\caption{$\rho_{i-1}$ in white and $\rho_{i}$ in black.}
    \label{fig:case_ii_b_eg}
\end{subfigure}
\hspace{1cm}
\begin{subfigure}{0.27\textwidth}
\centering
\begin{tikzpicture}[inner sep=0in,outer sep=0in]
\node (n) {\begin{varwidth}{6cm}{
\ytableausetup{boxsize=1.25em}
 \begin{ytableau} \none & \none[\scriptstyle{s_{j}-1}] & \none[\scriptstyle{s_{j}}] & \none[] & \none[] & \none[]  \\ \none[\hspace{-0.2cm} \scriptstyle{r_{i} =r_{i-1}}] & \none & \rook  & \scalebox{0.5}{\blackrook} &  &  \\ \none[\scriptstyle{t_{j}}] & \rook & \scalebox{0.5}{\blackrook}  &  &  &  \\ \none[\hspace{-0.2cm} \scriptstyle{t_{j}+1}] &  &  &  \\ \none[\hspace{-0.2cm}\scriptstyle{t_{j+1}}] & \scalebox{0.5}{\blackrook} &  \\ \end{ytableau}
 }\end{varwidth}};
\end{tikzpicture}
\caption{$\rho_{i-1}$ in white and $\rho_{i}$ in black.}
    \label{fig:case_ii_c_eg}
\end{subfigure}
    \caption{In the proof of Theorem~\ref{grassmann_necklace_characterization}: (A) In Case (ii) (a), $b$ must equal $t_{j}+1$ and $t_{j}+1 \in J_{i}$ because if $\rho_{i-1}$ has rooks at $(t_{j}, s_{j})$ and $(t_{j+1}, s_{j}-1)$, then $\rho_{i}$ has a rook at $(t_{j}+1, s_{j})$. (B) In Case (ii) (b) subcase (I), the element $m' \notin C_{i-1}$ is the column index corresponding to the row containing the first rook of $\rho_{i-1}$ that is in the last cell of its row. (C) In Case (ii) (b) subcase (II), $t_{j+1}$ must equal $t_{j}+1$ or else column $s_{j}$ would have a cell followed by an inner corner, a contradiction.}
    \label{fig:case_ii_eg}
\end{figure}
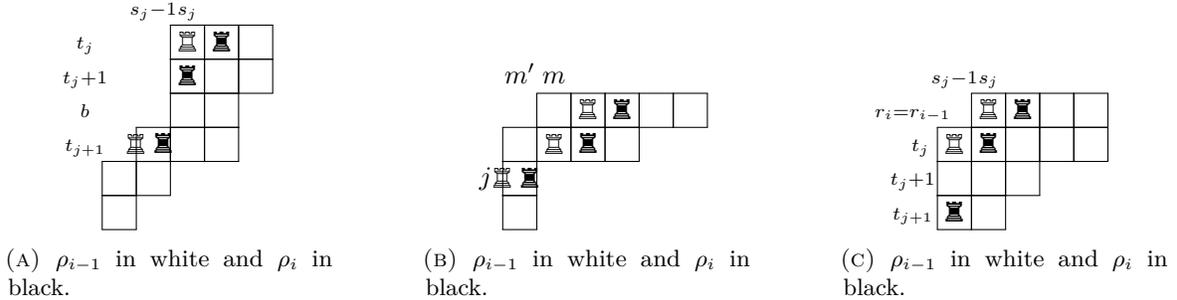

(b) $R_{i-1}$ is an interval, equal to $[r_{i-1}, t]$, say. Let $m = I_{i} \setminus I_{i-1}$. Two subcases arise: (I) $m \in C_{i}\setminus C_{i-1}$, and (II) $m \in R_{i} \setminus R_{i-1}$. 

Consider case (I): here, $R_{i}=R_{i-1}$, which means that $J_{i} \cap [1, n-k] = R_{i}$. Since $m \notin C_{i-1}$, $\rho_{i-1}$ has a rook in column $m$. Suppose $\rho_{i-1}$ has no rook that occurs in the last cell of its row. Then the rook in column $m$ must occur in the last row of the skew shape. Since $R_{i} = R_{i-1}$, $\rho_{i}$ is obtained from $\rho_{i-1}$ by moving every rook one cell to the right. In particular, occupied column $m$ in $\rho_{i-1}$ becomes unoccupied in $\rho_{i}$ (since the rook occurs in the last row), and hence $m \in J_{i}$. In the case when $\rho_{i-1}$ has a rook that occurs in the last cell of some row, let $j$ be the smallest such row index and let $m'$ be the corresponding column index. By condition (5) applied to $C_{i}$, there exists an outer corner in column $m'+1$. Since $\rho_{i-1}$ has a rook in row $m$, it must be the case that $m = m'+1$. Then since $R_{i} = R_{i-1}$, $\rho_{i}$ is obtained from $\rho_{i-1}$ by moving all the rooks of $\rho_{i-1}$ in rows $r_{i-1}$ to $j-1$ one cell to the right while maintaining the positions of the others. Since no rook of $\rho_{i-1}$ moves to column $m$ and the rook of $\rho_{i-1}$ in column $m$ moved to column $m+1$ in $\rho_{i}$, $m \in J_{i}$. This proves subcase (I) of Case (ii) (b). See Figure~\ref{fig:case_ii_b_eg} for an example of this case. 

Consider subcase (II): we first show that $R_{i}$ is also an interval. Let $R_{i-1} = \{t_{1}< \cdots < t_{j}\}$, where $t_{1} \coloneqq r_{i-1}$, $t_{j} \coloneqq t$ and $m = t_{j+1}$ so that $R_{i} = R_{i-1} \cup \{t_{j+1} \}= [r_{i-1}, t_{j}] \cup \{t_{j+1} \}$. By definition $S_{i} = S_{i-1} \cup \{i-1\}$, so if $S_{i} = \{s_{j+1}<s_{j}< \ldots < s_{1}\}$, then $S_{i-1} = \{s_{j+1}<s_{j}< \cdots < s_{2}\}$, where $s_{1} = i$. Now, suppose to the contrary that $t_{j+1} > t_{j}+1$. By condition (4), since $t_{j}, t_{j+1} \in R_{i}$ and are non-consecutive indices, $\lambda/\mu$ must have an inner corner at $(t_{j+1}-1, s_{j+1})$. By definition of $R_{i-1}, S_{i-1}$ and the fact that $I_{i-1} = J_{i-1}$,  the last rook of $\rho_{i-1}$ occurs at $(t_{j}, s_{j}-1)$, but since $t_{j}<t_{j+1}-1$, this is a contradiction to the existence of an inner corner at $(t_{j+1}-1, s_{j+1})$. Thus $R_{i}$ is also an interval. To finish, we need to show that $t_{j} + 1 \in J_{i}$. As noted above, the last rook of $\rho_{i-1}$ occurs at $(t_{j}, s_{j+1})$. Since $C_{i} = [n-k+1, n] \cap J_{i}$, $\rho_{i}$ will be obtained from $\rho_{i-1}$ by shifting every rook of $\rho_{i-1}$ one cell to the right and placing a new rook at $(t_{j}+1, s_{j+1})$. This implies that $t_{j}+1 \in J_{i}$, which proves subcase (II) of Case (ii) (b). See Figure~\ref{fig:case_ii_c_eg} for an example of this case. This completes the inductive step of $I_{i} = J_{i}$ for $i \in [n-k+2, n]$ and hence the proof.  
\end{proof}

We illustrate the theorem with a concrete example and non-example below. 

\begin{example}\label{exmp:characterization_theorem}
Let $n=11, k=5$. Consider the $(k, n)$-Grassmann necklace  $\mathcal{I}$ defined below.  \begin{align*}
I_{1} &= 12345, \quad \, \,\,\,\textcolor{red}{(c_{1} = 11)}  \qquad I_{7} = 789 \, 10 \, 11, \  \\ I_{2} &= 23456, \, \,\,\,  \textcolor{red} {\quad (c_{2} = 11)} \qquad I_{8} = 89\, 10\,11\,5, \quad \textcolor{red}{(r_{8} = 5)} \\ I_{3} &= 3456 \, 11, \quad \textcolor{red}{(c_{3} = 10)} \qquad I_{9} = 9\,  10 \, 11\, 35, \quad \textcolor{red}{(r_{9} = 3)}  \\
I_{4} &= 4569 \, 11, \quad \textcolor{red}{(c_{4} = 10)} \quad \, \, \, \,  I_{10} = 10 \, 11\, 135, \quad \textcolor{red}{(r_{10} = 1)}   \\ I_{5} &= 569\, 10\, 11, \, \, \textcolor{red}{(c_{5} = 8)} \qquad \, I_{11} = 11\, 1235. \quad \quad  \textcolor{red}{(r_{11} = 1)}  \\  I_{6} &= 689 \, 10 \, 11, \,\, \textcolor{red}{(c_{6} = 7)}
\end{align*}

In red, we have computed the corresponding $c_{i}$ or $r_{i}$ value depending on whether $i \in [1, n-k]$ or $[n-k+1, n]$. From here, we compute $IC(\mathcal{I})$ and $OC(\mathcal{I})$ as in (\ref{eq:IC_definition}) and (\ref{eq:OC_definition}). 
\[
IC(\mathcal{I}) = \{(4,7), (2, 8)\}, \quad OC(\mathcal{I}) = \{(3,11),(5,9), (6, 8)\}. 
\]

We now verify the conditions in Theorem~\ref{grassmann_necklace_characterization}. Condition (1) holds by inspection. Condition (2) ($c_{r_{j}} \geq j-1$ for $j \in [n-k+2, n]$) and Condition (3) ($r_{c_{i}+2}-1 \leq i$ for $i \in [1, n-k]$) hold by inspecting the pair of tables below. 

 \begin{center}
\begin{tabular}{ |p{1cm}|p{1cm}|p{1cm}|}
\hline
$j$ & $r_{j}$ & $c_{r_{j}}$  \\
\hline
8 & 5 & 8\\
\hline
9 & 3 & 10\\
\hline
10 & 1 & 11\\
\hline
11 & 1 & 11\\
\hline
\end{tabular}
\quad 
\begin{tabular}{ |p{1cm}|p{1cm}|p{1cm}|}
\hline
$i$ & $c_{i}$ & $r_{c_{i}+2}$  \\
\hline
1 & 11 & 0\\
\hline
2 & 11 & 0\\
\hline
3 & 10 & 0\\
\hline
4 & 10 & 0\\
\hline
5 & 8 & 1\\
\hline
6 & 7 & 3\\
\hline
\end{tabular}
\end{center}

For condition (4), the sets $I_{i}$ for $i \in [n-k+2, n]$ with non-consecutive row indices are $I_{9}, \ldots , I_{11}$. For each such $I_{i}$, we find the corresponding $R_{i} = \{t_{1}< \cdots < t_{\ell}\}$ and $S_{i} = \{s_{\ell}< \cdots < s_{1}\}$, then consider pairs $(t_{j}, s_{j})$ and $(t_{j+1}, s_{j+1})$ such that $t_{j+1} - t_{j} > 1$, and verify that $(t_{j+1}-1, s_{j+1}) \in IC(\mathcal{I})$. For $I_{9}$, we have $R_{11} =\{3,5\}$ and $S_{11} = \{7,8\}$ and $(3, 8), (5, 7)$ are pairs of cells with non-consecutive row indices. In this case, $(5-1, 7) \in IC(\mathcal{I})$. Similarly for $I_{10}$, $(1, 9)$ and $(3, 8)$ are pairs of cells with non-consecutive row indices, and $(2, 8) \in IC(\mathcal{I})$.

For condition (5), we can carry out a similar check on the sets $I_{i}$ for $i \in [1, n-k]$ and verify that the outer corner condition holds. Thus, $IC(\mathcal{I})$ and $OC(\mathcal{I})$ define valid inner and outer corners of a skew shape $\lambda/\mu$, which is drawn in Figure~\ref{fig:characterization_theorem_eg}. One can verify that the extremal non-nesting rook placements on $\lambda/\mu$ coincide with the Grassmann necklace above. 

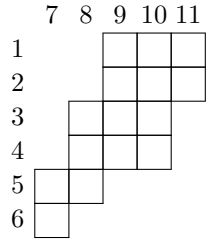
\begin{figure}[h]
    \centering
    \begin{tikzpicture}[inner sep=0in,outer sep=0in]
\node (n) {\begin{varwidth}{6cm}{
\ytableausetup{boxsize=1.25em}
\begin{ytableau} \none & \none[7] & \none[8] & \none[9] & \none[10] & \none[11]  \\ \none[1] & \none & \none &  &  &  \\ \none[2] & \none & \none &  &  &  \\ \none[3] & \none &  &  &  \\ \none[4] & \none  &  &  &  \\ \none[5] &  &  \\ \none[6] &  \\ \end{ytableau}}\end{varwidth}};
\end{tikzpicture}

\caption{$\lambda / \mu = 554421/221$ correponding to the Grassmann necklace in Example~\ref{exmp:characterization_theorem}. The inner and outer corners of the shape correspond to $IC(\mathcal{I})$ and $\mathcal{OC}(I)$ respectively.} 
    \label{fig:characterization_theorem_eg}
\end{figure}

\begin{example}
    Consider the same Grassmann necklace as in Example~\ref{exmp:characterization_theorem} but with $I_{11} = 11 \, 1245$ instead. The new Grassmann necklace has the same $IC(\mathcal{I})$ and $OC(\mathcal{I})$ as before. However, condition (4) of Theorem~\ref{grassmann_necklace_characterization} applied to $(2,8)$ and $(4, 7)$ requires that $(3, 7) \in IC(\mathcal{I})$, a contradiction. 
\end{example}

\end{example}

\section*{Acknowledgements}
I thank Thomas Lam for asking the question that motivated the main theorem in this work, and for fruitful discussions about positroids in general. I am very grateful to Irem Portakal and Akiyoshi Tsuchiya for discussions around this project, and to MPI Leipzig, where a portion of this work was carried out. I also thank Francesca Zaffalon for discussing her work on essential sets. I would  like to especially thank Katharina Jochemko for her comments and feedback. This work was supported by Wallenberg AI, Autonomous Systems and Software Program funded by the Knut and Alice Wallenberg Foundation.  

\bibliographystyle{alphaurl}
\bibliography{bibliography}

\end{document}